\theoremstyle{plain}
\newtheorem{thm}{Theorem}[section]
\newtheorem{cor}[thm]{Corollary}
\theoremstyle{definition}
\newtheorem{rem}[thm]{Remark}
\newtheorem{defi}[thm]{Definition}
\newtheorem{conv}[thm]{Convention}
\numberwithin{thm}{section}
\numberwithin{equation}{section}
\def\esup{\operatornamewithlimits{ess\,sup}}
\def\ces{\operatorname{Ces}}
\def\cop{\operatorname{Cop}}
\def\Ces{\operatorname{ces}}
\def\qq{\qquad}
\def\rw{\rightarrow}
\def\dn{\downarrow}
\def\ls{\lesssim}
\def\M{\mathcal M}
\def\la{\lambda}
\def\La{\Lambda}
\def\i{\infty}
\def\I{(0,\i)}
\def\R{\mathbb R}
\def\R{\mathbb R}
\def\M{\mathfrak M}
\def\W{{\mathcal W}}
\def\mp{{\mathfrak M}}
\def\rn{\R^n}
\def\la{\lambda}
\def\La{\Lambda}
\def\i{\infty}
\def\I{(0,\i)}
\begin{document}

\title[]{Boundedness of weighted iterated Hardy-type operators involving suprema from weighted Lebesgue spaces into weighted Ces\`{a}ro function spaces}

\author[]{R.Ch.~Mustafayev and N. B\.{I}LG\.{I}\c{C}L\.{I}} 

\address{Rza Mustafayev, Department of Mathematics, Faculty of Science, Karamanoglu Mehmetbey University, Karaman, 70100, Turkey}
\email{rzamustafayev@gmail.com}

\address{Nevin Bilgi\c{c}li, Department of Mathematics, Faculty of Science and Arts, Kirikkale 	University, 71450 Yahsihan, Kirikkale, Turkey}
\email{nevinbilgicli@gmail.com}

\subjclass[2010]{46E30, 26D10, 42B25, 42B35}

\keywords{weighted iterated Hardy operators involving suprema, Ces\`{a}ro function spaces, fractional maximal functions, classical Lorentz spaces}

\begin{abstract}
    In this paper the boundedness of the weighted iterated Hardy-type operators $T_{u,b}$ and $T_{u,b}^*$ involving suprema from  weighted Lebesgue space $L_p(v)$ into weighted Ces\`{a}ro function spaces ${\operatorname{Ces}}_{q}(w,a)$ are characterized. These results allow us to obtain the characterization of the boundedness of the supremal operator $R_u$ from $L^p(v)$ into ${\operatorname{Ces}}_{q}(w,a)$ on the cone of monotone non-increasing functions. For the convenience of the reader, we formulate the statement on the boundedness of the weighted Hardy operator $P_{u,b }$ from $L^p(v)$ into ${\operatorname{Ces}}_{q}(w,a)$ on the cone of monotone non-increasing functions. Under additional condition on $u$ and $b$, we are able to characterize the boundedness of weighted iterated Hardy-type operator $T_{u,b}$ involving suprema from $L^p(v)$ into ${\operatorname{Ces}}_q(w,a)$ on the cone of monotone non-increasing functions. At the end of the paper, as an application of obtained results, we calculate the norm of the fractional maximal function $M_{\gamma}$ from $\Lambda^p(v)$ into $\Gamma^q(w)$. 	
\end{abstract}

\date{}
\maketitle


\section{Introduction}\label{introduction}

Many Banach spaces which play an important role in functional
analysis and its applications are obtained in a special way: the
norms of these spaces are generated by positive sublinear operators
and by $L_p$-norms.

In connection with Hardy and Copson operators
$$
(Pf)(x) : = \frac{1}{x} \int_0^x f(t)\,dt \qq \mbox{and} \qq (Qf)(x)
: = \int_x^{\infty} \frac{f(t)}{t}\,dt,\qq (x > 0),
$$
the classical Ces\`{a}ro function space
$$
\ces(p) : = \bigg\{ f:\, \|f\|_{\ces(p)} : =  \bigg( \int_0^{\infty}
\bigg( \frac{1}{x} \int_0^x |f(t)|\,dt \bigg)^p\,dx
\bigg)^{\frac{1}{p}} < \infty \bigg\},
$$
and the classical Copson function space
$$
\cop(p) : = \bigg\{ f:\, \|f\|_{\cop(p)} : = \bigg( \int_0^{\infty}
\bigg( \int_x^{\infty} \frac{|f(t)|}{t}\,dt \bigg)^p\,dx
\bigg)^{\frac{1}{p}} < \infty \bigg\},
$$
where $1 < p \le \infty$, with the usual modifications if $p =
\infty$, are of interest.

The classical Ces\`{a}ro function spaces $\ces(p)$ have been
introduced in 1970 by Shiue \cite{shiue}. These spaces have been defined analogously to the Ces\`{a}ro sequence spaces that appeared two years earlier in
\cite{prog} when the Dutch Mathematical Society posted a problem to find a representation of their dual spaces. In 1971 Leibowitz proved that $\Ces_1 = \{0\}$ and for $1 < q < p \leq \infty$, $\ell_p$ and $\Ces_q$ sequence spaces are proper subspaces of $\Ces_p$ \cite{Leibowitz}. The problem posted \cite{prog} was resolved by Jagers \cite{jagers} in 1974 who gave an explicit
isometric description of the dual of Ces\`{a}ro sequence space. In \cite{syzhanglee}, Sy, Zhang and Lee gave a description of dual spaces of $\ces(p)$ spaces based on Jagers' result. In 1996 different, isomorphic description due to Bennett appeared in \cite{bennett1996}.  In \cite[Theorem 21.1]{bennett1996} Bennett observes that the classical Ces\`{a}ro function space and the classical Copson function space coincide for $p > 1$. He also derives estimates for the norms of the corresponding inclusion operators. The same result, with different estimates, is due to Boas \cite{boas1970}, who in fact obtained the integral analogue of the Askey-Boas Theorem \cite[Lemma 6.18]{boas1967} and \cite{askeyboas}. These results generalized in \cite{grosse} using the blocking technique. In \cite{astasmal2009} they investigated dual spaces for $\ces (p)$ for $1 < p < \infty$. Their description can be viewed as being analogous to one given for sequence spaces in \cite{bennett1996}. For a long time, Ces\`{a}ro function spaces have not attracted a lot of attention contrary to their sequence
counterparts. In fact there is quite rich literature concerning different topics studied in Ces\`{a}ro sequence spaces as for instance in \cites{CuiPluc,cuihud1999,cuihud2001,chencuihudsims,cuihudli}.
However, recently in a series of papers, Astashkin and Maligranda started to study the structure of Ces\`{a}ro function spaces. Among others, in \cite{astasmal2009} they investigated dual spaces for $\ces (p)$ for $1 < p < \infty$. Their description can be viewed as being analogous to one given for sequence spaces in \cite{bennett1996} (For more detailed information about history of classical Ces\`{a}ro spaces see recent survey paper \cite{asmalsurvey}). 

Throughout the paper we assume that $I : = (a,b)\subseteq (0,\i)$.
By $\mp (I)$ we denote the set of all measurable functions on $I$.
The symbol $\mp^+ (I)$ stands for the collection of all $f\in\mp
(I)$ which are non-negative on $I$, while $\mp^{+,\dn}(I)$ is used to denote the subset of those functions which are
non-increasing on $I$, respectively. A weight is a function $v \in {\mathfrak M}^+(0,\infty)$ such that $0 < V(x) < \infty$ for all $x \in (0,\infty)$,
where 
$$
V(x) : = \int_0^x v(t)\,dt.
$$
The family of all weight functions (also called just weights) on $(0,\infty)$ is given by $\W(0,\infty)$.

For $p\in (0,\i]$ and $w\in \mp^+(I)$, we define the functional
$\|\cdot\|_{p,w,I}$ on $\mp (I)$ by
\begin{equation*}
\|f\|_{p,w,I} : = \left\{\begin{array}{cl}
\left(\int_I |f(x)|^p w(x)\,dx \right)^{1/p} & \qq\mbox{if}\qq p<\i \\
\esup_{I} |f(x)|w(x) & \qq\mbox{if}\qq p=\i.
\end{array}
\right.
\end{equation*}

If, in addition, $w\in \W(I)$, then the weighted Lebesgue space
$L^p(w,I)$ is given by
\begin{equation*}
L^p(w,I) = \{f\in \mp (I):\,\, \|f\|_{p,w,I} < \i\}
\end{equation*}
and it is equipped with the quasi-norm $\|\cdot\|_{p,w,I}$.

When $I=(0,\infty)$, we write $L^p(w)$ instead of $L^p(w,(0,\infty))$.

We adopt the following usual conventions.
\begin{conv}\label{Notat.and.prelim.conv.1.1}
	We adopt the following conventions:
	
	\begin{itemize}
		\item Throughout the paper we put $0 \cdot \infty = 0$, $\infty / \infty =
		0$ and $0/0 = 0$.
		
		\item If $p\in [1,+\infty]$, we define $p'$ by $1/p + 1/p' = 1$.
		
		\item If $0 < q < p < \infty$, we define $r$ by $1 / r  = 1 / q - 1 / p$.
		
		\item If $I = (a,b) \subseteq {\mathbb R}$ and $g$ is monotone
		function on $I$, then by $g(a)$ and $g(b)$ we mean the limits
		$\lim_{x\rightarrow a+}g(x)$ and $\lim_{x\rightarrow b-}g(x)$, respectively.
	\end{itemize}
\end{conv}

Throughout the paper, we always denote by $c$ and $C$ a positive
constant, which is independent of main parameters but it may vary
from line to line. However a constant with subscript or superscript
such as $c_1$ does not change in different occurrences. By
$a\lesssim b$, ($b\gtrsim a$) we mean that $a\leq \la b$, where
$\la>0$ depends on inessential parameters. If $a\lesssim b$ and
$b\lesssim a$, we write $a\approx b$ and say that $a$ and $b$ are
equivalent. 

Unless a special remark is made, the differential element $dx$ is omitted when the integrals under consideration are the Lebesgue integrals.

The weighted Ces\`{a}ro and Copson function spaces are defined as
follows:
\begin{defi}\label{defi.2.0}
	Let  $0 <p \le \infty$, $u \in \mp^+ \I$ and $v \in \W (0,\infty)$. The
	weighted Ces\`{a}ro and Copson spaces are defined by
	\begin{align*}
	\ces_{p} (u,v) : & = \bigg\{ f \in \mp^+ \I: \|f\|_{\ces_{p}
		(u,v)} : = \big\| \|f\|_{1,v,(0,\cdot)}
	\big\|_{p,u,\I} < \i \bigg\}, \\
	\intertext{and} \cop_{p} (u,v) : & = \bigg\{ f \in \mp^+ \I:
	\|f\|_{\cop_{p} (u,v)} : = \big\| \|f\|_{1,v,(\cdot,\i)}
	\big\|_{p,u,\I} < \i \bigg\},
	\end{align*}
	respectively.
	
	When $v \equiv 1$ on $(0,\infty)$, we simply write $\ces_{p} (u)$ and $\cop_{p} (u)$ instead of $\ces_{p} (u,v)$ and $\cop_{p} (u,v)$, respectively.
\end{defi}

Recall that $\ces_{p} (u,v)$ and $\cop_{p} (u,v)$ are contained in the scale of weighted Ces\`{a}ro and Copson function spaces $\ces_{p,q} (u,v)$ and $\cop_{p,q} (u,v)$ defined in \cite{gmu_2017}. Obviously, $\ces(p) = \ces_p
(x^{-1})$ and $\cop(p) = \cop_p (x^{-1})$. In \cite{kamkub},
Kami{\'n}ska and Kubiak computed the dual norm of the Ces\`{a}ro
function space $\ces_{p}(u)$, generated by $1 < p < \infty$ and an
arbitrary positive weight $u$. A description presented in
\cite{kamkub} resembles the approach of Jagers \cite{jagers} for
sequence spaces.

Let $u \in \W\I \cap C\I$, $b \in \W\I$ and $B(t) : = \int_0^t
b(s)\,ds$. Assume that $b$ is a weight such that $b(t) > 0$ for a.e. $t \in (0,\infty)$. The weighted iterated Hardy-type operators involving suprema
$T_{u,b}$ and $T_{u,b}^*$ are defined at $g \in \M^+ \I$ by
\begin{align*}
(T_{u,b} g)(t) & : = \sup_{t \le \tau}
\frac{u(\tau)}{B(\tau)} \int_0^{\tau} g(y)b(y)\,dy,\qquad t \in \I, \\
(T_{u,b}^* g)(t) & : = \sup_{t \le \tau}
\frac{u(\tau)}{B(\tau)} \int_{\tau}^{\infty} g(y)b(y)\,dy,\qquad t \in \I.
\end{align*}

Such operators have been found indispensable in the search for
optimal pairs of rearrangement-invariant norms for which a
Sobolev-type inequality holds (cf. \cite{kerp}). They constitute a
very useful tool for characterization of the associate norm of an
operator-induced norm, which naturally appears as an optimal domain
norm in a Sobolev embedding (cf. \cite{pick2000}, \cite{pick2002}).
Supremum operators are also very useful in limiting interpolation
theory as can be seen from their appearance for example in
\cite{evop}, \cite{dok}, \cite{cwikpys}, \cite{pys}. Recall that $T_{u,b}$ successfully controls non-increasing rearrangements of wide range of maximal functions (see, for instance, \cite{musbil} and references therein).

It was shown in  \cite{gop} that for every $h \in \mp^+(0,\infty)$ and $t \in (0,\infty)$
$$
(T_{u,b} h)(t) = (T_{\bar{u},b} h) (t),
$$
where
$$
\bar{u} (t) : = B(t) \sup_{t \le \tau} \frac{u(\tau)}{B(\tau)}, \qquad t\in (0,\infty).
$$
Moreover, if the condition 
\begin{equation}\label{add.cond.}
\sup_{0 < t < \infty} \frac{u(t)}{B(t)} \int_0^t
\frac{b(\tau)}{u(\tau)}\,d\tau < \infty.
\end{equation}
holds, then for all $f \in \mp^{+,\dn} (0,\infty)$,
\begin{equation}\label{Split}
(T_{u,b}f)(t) \approx (R_u f)(t) + (P_{\bar{u},b}f)(t), \qquad t\in (0,\infty),
\end{equation}
where the supremal operator $R_u$ and the weighted Hardy operator $P_{u,b}$ are defined for $h \in \mp^+(0,\infty)$ and $t \in (0,\infty)$ by
\begin{align*}
(R_u h) (t) : & = \sup_{t \le \tau} u(\tau) h(\tau),  \\
(P_{u,b} h)(t) : & = \frac{u(t)}{B(t)} \int_0^t h(\tau) b(\tau) \,d \tau, 
\end{align*}
respectively.

Recall that the boundedness of $R_u$ from $L^p(v)$ into $L^q(w)$ on the cone of monotone non-increasing functions, that is,
the validity of the inequality
\begin{equation}\label{eq.R}
\|R_u f\|_{L^q(w)} \le C \, \|f\|_{L^p(v)}, \qq f \in \mp^{+,\dn} (0,\infty)
\end{equation}
was completely characterized in \cite {gop} in the case $0 < p \le q < \infty$. In the case $0 < q < p < \infty$,  \cite {gop} provides solution when $u$ is equivalent to a non-decreasing function on $(0,\infty)$. The complete solution of inequality  \eqref{eq.R} using a certain reduction method was presented in \cite{GogMusISI}. Another solution of \eqref{eq.R} was obtained in \cite{krep}.

Note that inequality
\begin{equation}\label{Reduction.Theorem.Thm.2.5}
\| P_{u,b} (f) \|_{q,w,(0,\infty)} \le c \| f \|_{p,v,(0,\infty)}, \qq f \in \mp^{+,\dn} (0,\infty)
\end{equation}
was considered by many authors and there exist several characterizations of this inequality (see, papers \cites{cpss,bengros,gjop,cgmp2008,GogStep,GogMusIHI}).

The complete characterizations of inequality  
\begin{equation}\label{Tub.thm.1.eq.1}
\|T_{u,b}f \|_{q,w,\I} \le C \| f \|_{p,v,\I}, \qq f \in
\mp^{+,\dn}(0,\infty)
\end{equation}
for $0 < q \le \infty$, $0 < p \le \infty$ were given in \cite{GogMusISI} and \cite{musbil}. Inequality \eqref{Tub.thm.1.eq.1} was characterized in \cite[Theorem
3.5]{gop} under condition \eqref{add.cond.}. Note that the case when $0 < p \le 1 < q < \infty$ was not
considered in \cite{gop}. It is also worth to mention that in the
case when $1 < p < \infty$, $0 < q < p < \infty$, $q \neq 1$
\cite[Theorem 3.5]{gop} contains only discrete condition. In
\cite{gogpick2007} the new reduction theorem was obtained when $0 <
p \le 1$, and this technique allowed to characterize inequality
\eqref{Tub.thm.1.eq.1} when $b \equiv 1$, and in the case when $0 <
q< p \le 1$, \cite{gogpick2007} contains only discrete condition. Using the results in  \cites{PS_Proc_2013,PS_Dokl_2013,PS_Dokl_2014,P_Dokl_2015}, another characterization of  \eqref{Tub.thm.1.eq.1}  was obtained  in  \cite{StepSham} and \cite{Sham}. 

In this paper we investigate the boundedness of $T_{u,b}$ and $T_{u,b}^*$ from the weighted Lebesgue spaces $L_p(v)$ into the weighted Ces\`{a}ro spaces $\ces_{q} (w,a)$, when $1 < p,\, q < \infty$ (see, Theorems \ref{aux.thm.1} and \ref{aux.thm.2}). These results allow us to obtain the characterization of the boundedness of $R_u$ from $L^p(v)$ into $\ces_{q}(w,a)$ on the cone of monotone non-increasing functions (see, Theorem \ref{thm.R}). For the convenience of the reader, we formulate the statement on the boundedness of $P_{u,b }$ from $L^p(v)$ into $\ces_{q}(w,a)$ on the cone of monotone non-increasing functions (see, Theorem \ref{aux.thm.3}). In view of \eqref{Split}, we are able to characterize the boundedness of $T_{u,b}$ from $L^p(v)$ into $\ces_q(w,a)$ on the cone of monotone non-increasing functions (see, Theorem \ref{thm.T}). At the end of the paper, as an application of obtained results, we calculate the norm of the fractional maximal function $M_{\gamma}$ from $\Lambda^p(v)$ into $\Gamma^q(w)$.

The paper is organized as follows. We start with formulations of "an integration by parts" formula in Section~\ref{integration by parts}. The boundedness results for $T_{u,b}$ and $T_{u,b}^*$ from $L^p(v)$ into $\ces_q(w,a)$ are presented in Section \ref{main results}. The characterizations of the boundedness of $R_u$, $P_{u,b}$ and $T_{u,b}$ from $L^p(v)$ into $\ces_{q}(w,a)$ on the cone of monotone non-increasing functions are given in Sections \ref{R}, \ref{P} and \ref{T}, respectively. Finally, the obtained in previous sections results  are applied to calculate the norm of the operator $M_{\gamma}: \Lambda_p(v) \rw \Gamma_q(w)$ in Section \ref{Appl.}.



\

\section{"An integration by parts" formula}\label{integration by parts}

\


We recall the following "an integration by parts" formula. For the convenience of the reader we give the proof here (cf. \cite[Lemma, p. 176]{step_1993}).
\begin{thm}\label{thm.IBP.0}
	Let $\alpha > 0$. Let $g$ be a non-negative function on $(0,\infty)$ such that $0 < \int_0^t g < \infty$, $t > 0$ and let $f$ be a non-negative non-increasing right-continuous function on $(0,\infty)$. Then
	\begin{align*}
	A_1 : = \int_0^{\infty} \bigg( \int_0^t g \bigg)^{\alpha} g(t) [f(t) - \lim_{t \rw +\infty} f(t)]\,dt < \infty \quad \Longleftrightarrow \quad A_2 : = \int_{(0,\infty)} \bigg( \int_0^t g \bigg)^{\alpha + 1}\,d[-f(t)] < \infty.
	\end{align*}
	Moreover, $A_1 \approx A_2$.
\end{thm}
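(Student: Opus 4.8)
The plan is to reduce the left‑hand side to a double integral, exchange the order of integration by Tonelli's theorem, and evaluate the inner integral explicitly; this in fact yields the sharper identity $A_1=A_2/(\alpha+1)$, of which the stated equivalence $A_1\approx A_2$ is an immediate consequence. Write $G(t):=\int_0^t g$. By the hypotheses on $g$, the function $G$ is continuous, non‑decreasing, positive and finite on $(0,\infty)$, absolutely continuous on every interval $[0,T]$, with $G(0)=0$ and $G'=g$ a.e. Since $-f$ is non‑decreasing and right‑continuous, it induces a unique non‑negative Borel measure $\nu=:d[-f]$ on $(0,\infty)$ determined by $\nu((c,d])=f(c)-f(d)$ for $0<c<d<\infty$; as $f$ is finite‑valued and monotone, $\nu$ is finite on compact subintervals of $(0,\infty)$, hence $\sigma$‑finite. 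Because $f$ is non‑increasing and non‑negative, $L:=\lim_{t\to+\infty}f(t)$ exists in $[0,\infty)$ and $f(t)-L\ge 0$ for every $t$; moreover, letting $d\to\infty$ above and using continuity from below of $\nu$ gives, for each $t>0$,
$$
f(t)-L=\nu\big((t,\infty)\big)=\int_{(0,\infty)}\chi_{(t,\infty)}(s)\,d\nu(s).
$$
This representation formula is the "integration by parts" in disguise; right‑continuity of $f$ is precisely what guarantees that the atom $\nu(\{t\})$ is not erroneously included.

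Substituting this into $A_1$ and invoking Tonelli's theorem — legitimate since the integrand is non‑negative and both Lebesgue measure and $\nu$ are $\sigma$‑finite — we obtain
$$
A_1=\int_0^{\infty}G(t)^{\alpha}g(t)\bigg(\int_{(0,\infty)}\chi_{(t,\infty)}(s)\,d\nu(s)\bigg)\,dt=\int_{(0,\infty)}\bigg(\int_0^{s}G(t)^{\alpha}g(t)\,dt\bigg)\,d\nu(s).
$$
For the inner integral, the function $\Phi(x):=x^{\alpha+1}/(\alpha+1)$ is of class $C^1$ on $[0,\infty)$ precisely because $\alpha>0$ (its derivative $x^{\alpha}$ is continuous up to $0$), and $G$ is absolutely continuous on $[0,s]$, so the composition $\Phi\circ G$ is absolutely continuous there with $(\Phi\circ G)'(t)=G(t)^{\alpha}g(t)$ for a.e. $t$; hence
$$
\int_0^{s}G(t)^{\alpha}g(t)\,dt=\Phi(G(s))-\Phi(G(0))=\frac{G(s)^{\alpha+1}}{\alpha+1}.
$$
Plugging this back gives $A_1=\frac{1}{\alpha+1}\int_{(0,\infty)}\big(\int_0^s g\big)^{\alpha+1}\,d[-f(s)]=\frac{A_2}{\alpha+1}$, which yields both the equivalence $A_1<\infty\Leftrightarrow A_2<\infty$ and $A_1\approx A_2$ (indeed with the explicit constant $\alpha+1$).

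I expect the only genuinely delicate points to be the measure‑theoretic bookkeeping in the first step — that $d[-f]$ is a well‑defined $\sigma$‑finite Borel measure and that $f(t)-L=\nu((t,\infty))$ with the \emph{open} interval, which is where right‑continuity of $f$ enters — together with the application of the chain rule for absolutely continuous functions in the evaluation of $\int_0^s G^{\alpha}g$, which requires $G$ absolutely continuous (guaranteed by $\int_0^t g<\infty$) and $\Phi$ of class $C^1$ up to the origin (guaranteed by $\alpha>0$). I would deliberately avoid a literal Stieltjes integration‑by‑parts argument of the form $\int\Phi\,d(-f)=[\,\cdot\,]+\int f\,d\Phi$, since that would force one to check that the boundary contributions at $0$ and at $\infty$ vanish — a nuisance when $f(0^+)=\infty$ or $G(\infty)=\infty$ — whereas the Tonelli route disposes of these cases automatically because every quantity in sight is non‑negative.
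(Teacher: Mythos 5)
Your proof is correct, and it takes a genuinely different route from the paper. The paper argues by Riemann--Stieltjes integration by parts in both directions: assuming $A_1<\infty$ it first deduces that $f(x)\big(\int_0^x g\big)^{\alpha+1}\to 0$ as $x\to 0+$, integrates by parts to get $A_2\lesssim A_1$, then symmetrically uses $A_2<\infty$ to kill the boundary term at $+\infty$ and obtain $A_1\lesssim A_2$, and finally reduces the case $\lim_{t\to+\infty}f(t)>0$ to the case of vanishing limit by passing to $f-\lim f$. You instead write $f(t)-L=\nu((t,\infty))$ for the Lebesgue--Stieltjes measure $\nu=d[-f]$ (this is exactly where right-continuity enters, and your identification with the \emph{open} interval is the correct one), apply Tonelli, and evaluate $\int_0^s\big(\int_0^t g\big)^{\alpha}g(t)\,dt=\frac{1}{\alpha+1}\big(\int_0^s g\big)^{\alpha+1}$ exactly via the chain rule for the absolutely continuous $G$. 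What your route buys: no boundary-term analysis, no case splitting on $\lim f$, validity of the identity even when both sides are $+\infty$, and the sharp constant $A_1=A_2/(\alpha+1)$ rather than mere equivalence (note the paper only claims, and only needs, $\int_0^x G^{\alpha}g\approx G(x)^{\alpha+1}$, whereas continuity of $G$ gives equality up to the factor $\alpha+1$). What the paper's route buys is a self-contained argument that never needs the measure-theoretic bookkeeping (the precise construction of $d[-f]$ on half-open intervals and its $\sigma$-finiteness), at the cost of the limit verifications you deliberately avoided. Both are complete proofs; yours is in fact slightly stronger in its conclusion.
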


\begin{proof}
	Assume at first that $\lim_{t \rw +\infty} f(t) = 0$. Let 
	$$
	A_1 = \int_0^{\infty} \bigg( \int_0^t g \bigg)^{\alpha} g(t) f(t)\,dt < \infty.
	$$
	Then
	$$
	\int_0^x \bigg( \int_0^t g \bigg)^{\alpha} g(t) f(t)\,dt \rightarrow 0, \quad \mbox{as} \quad x \rightarrow 0+.
	$$
	Since
	\begin{align*}
	\int_0^x \bigg( \int_0^t g \bigg)^{\alpha} g(t) f(t)\,dt \ge f(x) \, \int_0^x \bigg( \int_0^t g \bigg)^{\alpha} g(t) \,dt \approx f(x) \, \bigg( \int_0^x g \bigg)^{\alpha + 1},  \quad x > 0, 
	\end{align*}
	we have that
	$$
	f(x) \, \bigg( \int_0^x g \bigg)^{\alpha + 1} \rightarrow 0, \quad \mbox{as} \quad x \rightarrow 0+.
	$$
	Integrating by parts, we get that
	\begin{align*}
	A_2 & = \int_{(0,\infty)} \bigg( \int_0^t g \bigg)^{\alpha + 1}\,d[-f(t)] = - f(t) \, \bigg( \int_0^t g \bigg)^{\alpha + 1} \bigg|_{0}^{\infty} + 
	\int_{(0,\infty)} f(t)\,d\bigg( \int_0^t g \bigg)^{\alpha + 1} \\
	& = \lim_{t \rw 0+} f(t) \, \bigg( \int_0^t g \bigg)^{\alpha + 1} - \lim_{t \rw +\infty} f(t) \, \bigg( \int_0^t g \bigg)^{\alpha + 1} + (\alpha + 1) \int_0^{\infty} \bigg( \int_0^t g \bigg)^{\alpha} g(t) f(t)\,dt \\
	& \le (\alpha + 1) \int_0^{\infty} \bigg( \int_0^t g \bigg)^{\alpha} g(t) f(t)\,dt = (\alpha + 1) A_1.
	\end{align*}
	Thus
	$$
	A_2 \lesssim A_1.
	$$
	
	Now assume that 	
	$$
	A_2 : = \int_{(0,\infty)} \bigg( \int_0^t g \bigg)^{\alpha + 1}\,d[-f(t)] < \infty.
	$$
	Then
	$$
	\int_{[x,\infty)} \bigg( \int_0^t g \bigg)^{\alpha + 1}\,d[-f(t)] \rightarrow 0, \quad \mbox{as} \quad x \rightarrow +\infty.
	$$
	Since
	\begin{align*}
	\int_{[x,\infty)} \bigg( \int_0^t g \bigg)^{\alpha + 1}\,d[-f(t)] & \ge \bigg( \int_0^x g \bigg)^{\alpha + 1} \int_{[x,\infty)} \,d[-f(t)] \\
	& = \bigg( \int_0^x g \bigg)^{\alpha + 1} [f(x) - \lim_{x \rw +\infty} f(x)] = f(x) \,\bigg( \int_0^x g \bigg)^{\alpha + 1}, \quad x>0,
	\end{align*}
	we obtain that
	$$
	f(x) \,\bigg( \int_0^x g \bigg)^{\alpha + 1} \rightarrow 0, \quad \mbox{as} \quad x \rightarrow +\infty.
	$$
	Thus, integrating by parts, we get that	
	\begin{align*}
	A_1 & = \int_0^{\infty} \bigg( \int_0^t g \bigg)^{\alpha} g(t) f(t)\,dt \approx \int_0^{\infty} f(t) \,d \bigg(\int_0^t g \bigg)^{\alpha + 1} \\
	& = f(t) \, \bigg(\int_0^t g \bigg)^{\alpha + 1} \bigg|_0^{\infty} + \int_0^{\infty} \bigg(\int_0^t g \bigg)^{\alpha + 1} \,d [-f(t)] \\
	& = \lim_{t \rw \infty} f(t) \, \bigg(\int_0^t g \bigg)^{\alpha + 1} - \lim_{t \rw 0+} f(t) \, \bigg(\int_0^t g \bigg)^{\alpha + 1}
	+ \int_0^{\infty} \bigg(\int_0^t g \bigg)^{\alpha + 1} \,d [-f(t)] \\
	& \le \int_0^{\infty} \bigg(\int_0^t g \bigg)^{\alpha + 1} \,d [-f(t)] = A_2.
	\end{align*}
	Hence
	$$
	A_1 \lesssim A_2.
	$$
	
	We have shown that if $\lim_{x \rw +\infty} f(x) = 0$, then
	$$
	A_1 < \infty \quad \Longleftrightarrow \quad A_2 < \infty,
	$$
	and 
	$$
	A_1 \approx A_2.
	$$
	
	Now assume that $\lim_{x \rw +\infty} f(x) > 0$. Then, applying previous statement to the function $f(x) - \lim_{x \rw +\infty} f(x)$, we arrive at
	$$
	\int_0^{\infty} \bigg( \int_0^t g \bigg)^{\alpha} g(t) [f(t) - \lim_{x \rw +\infty} f(x)]\,dt \approx\int_{(0,\infty)}  
	\bigg( \int_0^t g \bigg)^{\alpha + 1}\,d[-f(t)].
	$$
	
	The proof is completed.
\end{proof}

\begin{rem}\label{rem.IBP.0}
	Note that if $f \in \mp^{+,\dn}(0,\infty)$ is such that $\lim_{x \rw +\infty} f(x) > 0$, then
	$$
	\int_0^{\infty} \bigg( \int_0^t g \bigg)^{\alpha} g(t) f(t)\,dt  < \infty \quad \Longrightarrow \quad \int_0^{\infty} g(x)\,dx < \infty.
	$$
	Indeed: for each $x \in (0,\infty)$
	\begin{align*}
	\infty > \int_0^{\infty} \bigg( \int_0^t g \bigg)^{\alpha} g(t) f(t)\,dt & \ge \int_0^x \bigg( \int_0^t g \bigg)^{\alpha} g(t) f(t)\,dt \\
	& \ge f(x) \, \int_0^x \bigg( \int_0^t g \bigg)^{\alpha} g(t)\,dt \approx f(x)\,\bigg(\int_0^x g \bigg)^{\alpha + 1} 	
	\end{align*}
	holds. Thus
	$$
	\lim_{x \rw +\infty} f(x) \cdot \bigg(\int_0^x g \bigg)^{\alpha + 1} \le f(x)\,\bigg(\int_0^x g \bigg)^{\alpha + 1} \le \int_0^{\infty} \bigg( \int_0^t g \bigg)^{\alpha} g(t) f(t)\,dt < \infty.
	$$
	Hence
	$$
	\lim_{x \rw +\infty} f(x) \cdot \bigg(\int_0^{\infty} g \bigg)^{\alpha + 1} < \infty.
	$$
	Therefore
	$$
	\int_0^{\infty} g < \infty.
	$$
\end{rem}

\begin{cor}\label{cor.IBP.0}
	Let $\alpha > 0$. Let $g$ be a non-negative function on $(0,\infty)$ such that $0 < \int_0^t g < \infty$, $t > 0$ and let $f$ be a non-negative non-increasing right-continuous function on $(0,\infty)$. Then
	\begin{align*}
	\int_0^{\infty} \bigg( \int_0^t g \bigg)^{\alpha} g(t) f(t)\,dt \approx \int_{(0,\infty)}  
	\bigg( \int_0^t g \bigg)^{\alpha + 1}\,d[-f(t)] + \lim_{x \rw +\infty} f(x) \cdot \bigg( \int_0^{\infty} g \bigg)^{\alpha + 1}.
	\end{align*}
\end{cor}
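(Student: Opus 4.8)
The plan is to deduce the statement from Theorem~\ref{thm.IBP.0} and Remark~\ref{rem.IBP.0} by peeling off the limit of $f$ at infinity. Set $L := \lim_{x \rw +\infty} f(x)$, which exists and is finite because $f$ is non-negative and non-increasing.

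If $L = 0$, then the second term on the right-hand side equals $0 \cdot \big( \int_0^{\infty} g \big)^{\alpha+1} = 0$ by the convention $0\cdot\infty = 0$, and the asserted equivalence is exactly the conclusion $A_1 \approx A_2$ of Theorem~\ref{thm.IBP.0} applied to $f$ itself (with the understanding that $A_1 \approx A_2$ also covers the case when both are $+\infty$, which is part of the theorem since $A_1 < \infty \Leftrightarrow A_2 < \infty$). If $L > 0$, I would first note that $f(t) \ge L$ for all $t$, so the splitting
\begin{equation*}
\int_0^{\infty} \bigg( \int_0^t g \bigg)^{\alpha} g(t) f(t)\,dt = \int_0^{\infty} \bigg( \int_0^t g \bigg)^{\alpha} g(t) \big[ f(t) - L \big]\,dt + L \int_0^{\infty} \bigg( \int_0^t g \bigg)^{\alpha} g(t)\,dt
\end{equation*}
involves only non-negative quantities and hence is legitimate. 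To the first summand I apply Theorem~\ref{thm.IBP.0} to $f$ (whose limit at infinity is now $L$), using $d[-(f-L)] = d[-f]$, to conclude that it is comparable to $\int_{(0,\infty)} \big( \int_0^t g \big)^{\alpha+1}\,d[-f(t)]$. For the second summand, since $\int_0^t g \rw 0$ as $t \rw 0+$ (dominated convergence, as $\int_0^{t_0} g < \infty$ for some $t_0>0$) and $\int_0^t g$ increases to $\int_0^{\infty} g$ as $t \rw +\infty$, a direct antiderivative computation gives
\begin{equation*}
\int_0^{\infty} \bigg( \int_0^t g \bigg)^{\alpha} g(t)\,dt = \frac{1}{\alpha+1} \bigg( \int_0^{\infty} g \bigg)^{\alpha+1},
\end{equation*}
which remains valid (as $+\infty$) when $\int_0^{\infty} g = \infty$. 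Multiplying by $L$ and summing the two pieces yields the claim, with constants depending only on $\alpha$.

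I do not expect a genuine obstacle here; the only point that needs attention is consistency of the conventions in the degenerate cases. When $L > 0$ and $\int_0^{\infty} g = \infty$, the right-hand side is $+\infty$ because $L\cdot\big(\int_0^\infty g\big)^{\alpha+1} = +\infty$, and Remark~\ref{rem.IBP.0} guarantees the left-hand side is $+\infty$ as well, since a finite left-hand side would force $\int_0^\infty g < \infty$. Thus the equivalence holds in all cases, and the argument is essentially bookkeeping around Theorem~\ref{thm.IBP.0}.
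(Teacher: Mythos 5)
Your proposal is correct and follows essentially the same route as the paper: split off the limit $L$, apply Theorem~\ref{thm.IBP.0} to $f(t)-L$, evaluate $\int_0^\infty \big(\int_0^t g\big)^{\alpha}g = \frac{1}{\alpha+1}\big(\int_0^\infty g\big)^{\alpha+1}$, and use Remark~\ref{rem.IBP.0} to handle the case $L>0$ with $\int_0^\infty g=\infty$. Your treatment of the degenerate conventions is, if anything, slightly more explicit than the paper's, but there is no substantive difference.
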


\begin{proof}
	If $\lim_{x \rw +\infty} f(x) = 0$, then the statement follows by Theorem \ref{thm.IBP.0}. If $\lim_{x \rw +\infty} f(x) > 0$, then by Remark \ref{rem.IBP.0}, we know that	
	$$
	\int_0^{\infty} \bigg( \int_0^t g \bigg)^{\alpha} g(t) f(t)\,dt < \infty \quad \Longrightarrow \quad \int_0^{\infty} g(x)\,dx < \infty.
	$$
	Therefore, by Theorem \ref{thm.IBP.0}, we get that
	\begin{align*}
	\int_0^{\infty} \bigg( \int_0^t g \bigg)^{\alpha} g(t) f(t)\,dt & = \int_0^{\infty} \bigg( \int_0^t g \bigg)^{\alpha} g(t) [f(t) - \lim_{x \rw +\infty} f(x)]\,dt + \lim_{x \rw +\infty} f(x) \cdot \int_0^{\infty} \bigg( \int_0^t g \bigg)^{\alpha} g(t)\,dt \\
	& \approx \int_{(0,\infty)} \bigg( \int_0^t g \bigg)^{\alpha + 1}\,d[-f(t)] + \lim_{x \rw +\infty} f(x) \cdot \bigg( \int_0^{\infty} g \bigg)^{\alpha + 1}.
	\end{align*}
	
	The proof is completed.
\end{proof}

\begin{thm}\label{thm.IBP}
	Let $\alpha > 0$. Let $g$ be a non-negative function on $(0,\infty)$ such that $0 < \int_t^{\infty} g < \infty$, $t > 0$ and let $f$ be a non-negative non-decreasing left-continuous function on $(0,\infty)$. Then
	\begin{align*}
	B_1 : = \int_0^{\infty} \bigg( \int_t^{\infty} g \bigg)^{\alpha} g(t) [f(t) - f(0+)]\,dt < \infty \quad \Longleftrightarrow \quad B_2 : = \int_{(0,\infty)}  
	\bigg( \int_t^{\infty} g \bigg)^{\alpha + 1}\,d[f(t)] < \infty.
	\end{align*}
	Moreover, $B_1 \approx B_2$.
\end{thm}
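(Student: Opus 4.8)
The plan is to mirror the proof of Theorem~\ref{thm.IBP.0}, only now integrating by parts against the tail $\int_t^{\infty}g$ in place of the head $\int_0^t g$. (Equivalently, the change of variable $t\mapsto 1/t$ turns the tail version into the head version, so one could also deduce the statement from Corollary~\ref{cor.IBP.0}; I will describe the direct argument.) As a preliminary reduction, since $f$ is non-decreasing the limit $f(0+)$ is a finite nonnegative number, and replacing $f$ by $f-f(0+)$ --- which is again nonnegative, non-decreasing, left-continuous, has $(f-f(0+))(0+)=0$, satisfies $d[f-f(0+)]=d[f]$, and produces exactly the same $B_1$ and $B_2$ as $f$ --- we may assume $f(0+)=0$, so that $B_1=\int_0^{\infty}\bigg(\int_t^{\infty}g\bigg)^{\alpha}g(t)f(t)\,dt$.

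Suppose first that $B_1<\infty$. Using the elementary identity $\int_x^{\infty}\big(\int_t^{\infty}g\big)^{\alpha}g(t)\,dt=\frac{1}{\alpha+1}\big(\int_x^{\infty}g\big)^{\alpha+1}$ (substitute $u=\int_t^{\infty}g$; the standing hypothesis $\int_t^{\infty}g<\infty$ forces $\int_t^{\infty}g\to0$ as $t\to\infty$) together with the monotonicity of $f$, we get for every $x>0$
$$f(x)\,\bigg(\int_x^{\infty}g\bigg)^{\alpha+1}\le(\alpha+1)\int_x^{\infty}\bigg(\int_t^{\infty}g\bigg)^{\alpha}g(t)f(t)\,dt,$$
and the right-hand side, being the tail of the convergent integral $(\alpha+1)B_1$, tends to $0$ as $x\to\infty$. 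Hence the boundary term at $+\infty$ vanishes; integrating by parts on $(a,b]$ --- the function $t\mapsto\big(\int_t^{\infty}g\big)^{\alpha+1}$ is absolutely continuous with derivative $-(\alpha+1)\big(\int_t^{\infty}g\big)^{\alpha}g(t)$ --- and then letting $a\to0+$ and $b\to\infty$, with the surviving boundary term $f(a)\big(\int_a^{\infty}g\big)^{\alpha+1}\ge0$ only helping, we obtain $B_2\le(\alpha+1)B_1$.

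Conversely, suppose $B_2<\infty$. Since $\int_t^{\infty}g\ge\int_x^{\infty}g$ for $0<t\le x$, the normalization $f(0+)=0$ gives, for every $x>0$,
$$f(x)\,\bigg(\int_x^{\infty}g\bigg)^{\alpha+1}\le\int_{(0,x]}\bigg(\int_t^{\infty}g\bigg)^{\alpha+1}\,d[f(t)],$$
and the right-hand side is the restriction of the finite measure $\big(\int_t^{\infty}g\big)^{\alpha+1}d[f(t)]$ to $(0,x]$, hence tends to $0$ as $x\to0+$; so now the boundary term at $0$ vanishes. Integrating by parts on $(a,b]$ as before and letting $a\to0+$, $b\to\infty$, with the surviving boundary term at $+\infty$ nonnegative, we get $(\alpha+1)B_1\le B_2$. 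Combining the two estimates yields $B_1\approx B_2$, and in particular $B_1<\infty\Leftrightarrow B_2<\infty$.

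The one point demanding care is the vanishing of the boundary terms in the Stieltjes integration by parts; the two displayed inequalities above are exactly what make this work, and they rely on the left-continuity of $f$ and on the standing hypothesis $\int_t^{\infty}g<\infty$ (so that $\int_t^{\infty}g\to0$ at $\infty$). The possible divergence of $\int_0^{\infty}g$ causes no difficulty: near $0$ the blowing-up factor $\big(\int_t^{\infty}g\big)^{\alpha+1}$ is multiplied by $f(t)\to f(0+)=0$ and is dominated by the initial portion of the finite quantity $B_2$.
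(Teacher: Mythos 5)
Your proposal is correct and follows essentially the same route as the paper's proof: reduce to $f(0+)=0$, show the relevant boundary term vanishes (at $+\infty$ via the tail of the convergent integral $B_1$, at $0+$ via the finiteness of the measure defining $B_2$), and integrate by parts, discarding the remaining boundary term which has a favorable sign. The only differences are cosmetic (doing the normalization $f\mapsto f-f(0+)$ at the outset rather than at the end, and working on finite intervals $(a,b]$ before passing to the limit), so nothing further is needed.
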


\begin{proof}
	Assume at first that $f(0+) = 0$. Let 
	$$
	B_1 : = \int_0^{\infty} \bigg( \int_t^{\infty} g \bigg)^{\alpha} g(t) f(t)\,dt < \infty.
	$$
	Then
	$$
	\int_x^{\infty} \bigg( \int_t^{\infty} g \bigg)^{\alpha} g(t) f(t)\,dt \rightarrow 0, \quad \mbox{as} \quad x \rightarrow \infty.
	$$
	Since
	\begin{align*}
	\int_x^{\infty} \bigg( \int_t^{\infty} g \bigg)^{\alpha} g(t) f(t)\,dt \ge f(x) \, \int_x^{\infty} \bigg( \int_t^{\infty} g \bigg)^{\alpha} g(t) \,dt \approx f(x) \, \bigg( \int_x^{\infty} g \bigg)^{\alpha + 1},  \quad x > 0, 
	\end{align*}
	we have that
	$$
	f(x) \, \bigg( \int_x^{\infty} g \bigg)^{\alpha + 1} \rightarrow 0, \quad \mbox{as} \quad x \rightarrow \infty.
	$$
	Hence, integrating by parts, we get that
	\begin{align*}
	B_2 & = \int_{(0,\infty)} \bigg( \int_t^{\infty} g \bigg)^{\alpha + 1}\,d[f(t)] = f(t) \, \bigg( \int_t^{\infty} g \bigg)^{\alpha + 1} \bigg|_{0}^{\infty} - 
	\int_{(0,\infty)} f(t)\,d\bigg( \int_t^{\infty} g \bigg)^{\alpha + 1} \\
	& = \lim_{t \rw \infty} f(t) \, \bigg( \int_t^{\infty} g \bigg)^{\alpha + 1} - \lim_{t \rw 0+} f(t) \, \bigg( \int_t^{\infty} g \bigg)^{\alpha + 1} + (\alpha + 1) \int_0^{\infty} \bigg( \int_t^{\infty} g \bigg)^{\alpha} g(t) f(t)\,dt \\
	& \le (\alpha + 1) \int_0^{\infty} \bigg( \int_t^{\infty} g \bigg)^{\alpha} g(t) f(t)\,dt = (\alpha + 1) B_1.
	\end{align*}
	
	Now assume that 	
	$$
	B_2 : = \int_{(0,\infty)} \bigg( \int_t^{\infty} g \bigg)^{\alpha + 1}\,d[f(t)] < \infty.
	$$
	Then
	$$
	\int_{(0,x]} \bigg( \int_t^{\infty} g \bigg)^{\alpha + 1}\,d[f(t)] \rightarrow 0, \quad \mbox{as} \quad x \rightarrow 0+.
	$$
	Since
	\begin{align*}
	\int_{(0,x]} \bigg( \int_t^{\infty} g \bigg)^{\alpha + 1}\,d[f(t)] & \ge \bigg( \int_x^{\infty} g \bigg)^{\alpha + 1} \int_{(0,x]} \,d[f(t)] \\
	& = \bigg( \int_x^{\infty} g \bigg)^{\alpha + 1} [f(x) - f(0+)] = f(x) \,\bigg( \int_x^{\infty} g \bigg)^{\alpha + 1}, \quad x>0,
	\end{align*}
	we obtain that
	$$
	f(x) \,\bigg( \int_x^{\infty} g \bigg)^{\alpha + 1} \rightarrow 0, \quad \mbox{as} \quad x \rightarrow 0+.
	$$
	Thus, integrating by parts, we get that	
	\begin{align*}
	B_1 & = \int_0^{\infty} \bigg( \int_t^{\infty} g \bigg)^{\alpha} g(t) f(t)\,dt \approx \int_0^{\infty} f(t)\,d \bigg[ - \bigg(\int_t^{\infty} g \bigg)^{\alpha + 1} \bigg] \\
	& = - f(t) \, \bigg(\int_t^{\infty} g \bigg)^{\alpha + 1} \bigg|_0^{\infty} + \int_0^{\infty} \bigg(\int_t^{\infty} g \bigg)^{\alpha + 1} \,d [f(t)] \\
	& = \lim_{t \rw 0+} f(t) \, \bigg(\int_t^{\infty} g \bigg)^{\alpha + 1} - \lim_{t \rw \infty} f(t) \, \bigg(\int_t^{\infty} g \bigg)^{\alpha + 1}
	+ \int_0^{\infty} \bigg(\int_t^{\infty} g \bigg)^{\alpha + 1} \,d [f(t)] \\
	& \le \int_0^{\infty} \bigg(\int_t^{\infty} g \bigg)^{\alpha + 1} \,d [f(t)] = B_2.
	\end{align*}
	
	We have shown that if $f(0+) = 0$, then
	$$
	B_1 < \infty \quad \Longleftrightarrow \quad B_2 < \infty,
	$$
	and 
	$$
	B_1 \approx B_2.
	$$
	
	Now assume that $f(0+) > 0$. Then, applying previous statement to the function $f(x) - f(0+)$, we arrive at
	$$
	\int_0^{\infty} \bigg( \int_t^{\infty} g \bigg)^{\alpha} g(t) [f(t) - f(0+)]\,dt \approx\int_{(0,\infty)}  
	\bigg( \int_t^{\infty} g \bigg)^{\alpha + 1}\,d[f(t)].
	$$
	
	The proof is completed.
\end{proof}

\begin{rem}\label{rem.IBP}
	Note that if $f$ is a non-negative non-decreasing function on $(0,\infty)$ such that $f(0+) > 0$, then
	$$
	\int_0^{\infty} \bigg( \int_t^{\infty} g \bigg)^{\alpha} g(t) f(t)\,dt  < \infty \quad \Longrightarrow \quad \int_0^{\infty} g(x)\,dx < \infty.
	$$
	Indeed: for each $x \in (0,\infty)$
	\begin{align*}
	\infty > \int_0^{\infty} \bigg( \int_t^{\infty} g \bigg)^{\alpha} g(t) f(t)\,dt & \ge \int_x^{\infty} \bigg( \int_t^{\infty} g \bigg)^{\alpha} g(t) f(t)\,dt \\
    & \ge f(x) \, \int_x^{\infty} \bigg( \int_t^{\infty} g \bigg)^{\alpha} g(t)\,dt \approx f(x)\,\bigg(\int_x^{\infty} g \bigg)^{\alpha + 1} 	
	\end{align*}
	holds. Thus
	$$
	f(0+) \bigg(\int_x^{\infty} g \bigg)^{\alpha + 1} \le f(x)\,\bigg(\int_x^{\infty} g \bigg)^{\alpha + 1} \le \int_0^{\infty} \bigg( \int_t^{\infty} g \bigg)^{\alpha} g(t) f(t)\,dt < \infty.
	$$
	Hence
	$$
	f(0+) \, \bigg(\int_0^{\infty} g \bigg)^{\alpha + 1} < \infty.
	$$
	Therefore
	$$
	\int_0^{\infty} g < \infty.
	$$
\end{rem}

\begin{cor}\label{cor.IBP}
	Let $\alpha > 0$. Let $g$ be a non-negative function on $(0,\infty)$ such that $0 < \int_t^{\infty} g < \infty$, $t > 0$ and let $f$ be a non-negative non-decreasing left-continuous function on $(0,\infty)$. Then
	\begin{align*}
	\int_0^{\infty} \bigg( \int_t^{\infty} g \bigg)^{\alpha} g(t) f(t)\,dt \approx \int_{(0,\infty)}  
	\bigg( \int_t^{\infty} g \bigg)^{\alpha + 1}\,d[f(t)] + f(0+) \, \bigg( \int_0^{\infty} g \bigg)^{\alpha + 1}.
	\end{align*}
\end{cor}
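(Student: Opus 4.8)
The plan is to transcribe the proof of Corollary~\ref{cor.IBP.0}, replacing Theorem~\ref{thm.IBP.0} and Remark~\ref{rem.IBP.0} by their non-decreasing counterparts Theorem~\ref{thm.IBP} and Remark~\ref{rem.IBP}, and distinguishing the cases $f(0+) = 0$ and $f(0+) > 0$. If $f(0+) = 0$, the extra term $f(0+)\,\big(\int_0^{\infty} g\big)^{\alpha+1}$ vanishes and the assertion is precisely the equivalence $B_1 \approx B_2$ of Theorem~\ref{thm.IBP}, read in the extended sense so that both sides may be infinite; nothing more is needed.

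Assume now $f(0+) > 0$. I would first dispose of the degenerate subcase $\int_0^{\infty} g = \infty$: the right-hand side is then infinite through its second summand, while the left-hand side is infinite as well, since
$$
\int_0^{\infty} \bigg( \int_t^{\infty} g \bigg)^{\alpha} g(t) f(t)\,dt \ge f(0+)\int_0^{\infty} \bigg( \int_t^{\infty} g \bigg)^{\alpha} g(t)\,dt \approx f(0+)\bigg(\int_0^{\infty} g\bigg)^{\alpha+1} = \infty,
$$
where the middle equivalence is the elementary identity $\int_0^{\infty}\big(\int_t^{\infty} g\big)^{\alpha} g(t)\,dt \approx \big(\int_0^{\infty} g\big)^{\alpha+1}$ already used inside the proof of Theorem~\ref{thm.IBP} (it follows from the substitution $G(t):=\int_t^{\infty} g$, $dG=-g\,dt$, with $G$ ranging over $(0,\int_0^{\infty} g)$). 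Hence the equivalence holds in this subcase; alternatively, one may quote Remark~\ref{rem.IBP} to the same effect.

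It remains to treat $f(0+) > 0$ with $\int_0^{\infty} g < \infty$. Here I would write $f = [f - f(0+)] + f(0+)$ and split the left-hand integral by linearity, an identity valid in $[0,\infty]$. The function $f - f(0+)$ is non-negative (because $f$ is non-decreasing), non-decreasing, left-continuous, and vanishes at $0+$, so Theorem~\ref{thm.IBP} applies to it and equates, up to multiplicative constants, $\int_0^{\infty}\big(\int_t^{\infty} g\big)^{\alpha} g(t)[f(t)-f(0+)]\,dt$ with $\int_{(0,\infty)}\big(\int_t^{\infty} g\big)^{\alpha+1}\,d[f(t)]$; the constant part contributes $f(0+)\int_0^{\infty}\big(\int_t^{\infty} g\big)^{\alpha} g(t)\,dt \approx f(0+)\big(\int_0^{\infty} g\big)^{\alpha+1}$ by the same elementary identity. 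Summing the two pieces gives the claim. I do not expect a real obstacle here: the argument is a faithful copy of the one for Corollary~\ref{cor.IBP.0}, the only points meriting a moment's care being the verification that $f - f(0+)$ retains the hypotheses of Theorem~\ref{thm.IBP}, and the separate bookkeeping of the case $\int_0^{\infty} g = \infty$ so that the displayed $\approx$ is understood with both sides possibly infinite.
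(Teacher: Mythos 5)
Your proposal is correct and follows essentially the same route as the paper's own proof: the case $f(0+)=0$ is Theorem~\ref{thm.IBP} directly, and for $f(0+)>0$ both arguments split $f=[f-f(0+)]+f(0+)$, apply Theorem~\ref{thm.IBP} to the first summand and the elementary identity $\int_0^{\infty}\big(\int_t^{\infty}g\big)^{\alpha}g(t)\,dt\approx\big(\int_0^{\infty}g\big)^{\alpha+1}$ to the second. Your explicit treatment of the subcase $\int_0^{\infty}g=\infty$ is just a direct version of what the paper obtains by invoking Remark~\ref{rem.IBP}, as you yourself note.
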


\begin{proof}
If $f(0+) = 0$, then the statement follows by Theorem \ref{thm.IBP}. If $f(0+) > 0$, then by Remark \ref{rem.IBP}, we know that	
$$
\int_0^{\infty} \bigg( \int_t^{\infty} g \bigg)^{\alpha} g(t) f(t)\,dt < \infty \quad \Longrightarrow \quad \int_0^{\infty} g(x)\,dx < \infty.
$$
Therefore, by Theorem \ref{thm.IBP}, we get that
\begin{align*}
\int_0^{\infty} \bigg( \int_t^{\infty} g \bigg)^{\alpha} g(t) f(t)\,dt & = \int_0^{\infty} \bigg( \int_t^{\infty} g \bigg)^{\alpha} g(t) [f(t) - f(0+)]\,dt + f(0+) \,\int_0^{\infty} \bigg( \int_t^{\infty} g \bigg)^{\alpha} g(t)\,dt \\
& \approx \int_{(0,\infty)} \bigg( \int_t^{\infty} g \bigg)^{\alpha + 1}\,d[f(t)] + f(0+) \, \bigg( \int_0^{\infty} g \bigg)^{\alpha + 1}.
\end{align*}

The proof is completed.
\end{proof}


\

\section{The boundedness of $T_{u,b}$ and $T_{u,b}^*$ from $L^p(v)$ into $\ces_q(w,a)$}\label{main results}

\


	In this section we give solutions of the following two inequalities 
	\begin{align}
	\bigg( \int_0^{\infty} \bigg( \int_0^x \bigg( \sup_{t \le \tau}
	\frac{u(\tau)}{B(\tau)} \int_0^{\tau} h(y)b(y)\,dy \bigg) \,a(t)\,dt \bigg)^q w(x)\,dx \bigg)^{\frac{1}{q}} \le C \, \bigg( \int_0^{\infty} h(s)^pv(s)\,ds\bigg)^{\frac{1}{p}}, \quad h \in \mp^+ (0,\infty) \label{eq.1}
	\end{align}
	and
	\begin{align}
	\bigg( \int_0^{\infty} \bigg( \int_0^x \bigg( \sup_{t \le \tau}
	\frac{u(\tau)}{B(\tau)} \int_{\tau}^{\infty} h(y)b(y)\,dy \bigg) \,a(t)\,dt \bigg)^q w(x)\,dx \bigg)^{\frac{1}{q}} \le C \, \bigg( \int_0^{\infty} h(s)^pv(s)\,ds\bigg)^{\frac{1}{p}}, \quad h \in \mp^+ (0,\infty), \label{eq.2}
	\end{align}
	where $1 < p \le q < \infty$ and $a,\,u,\,v,\,w \in \W \I$. Using the duality argument, we reduce the problem to the boundedness for the dual of integral Volterra operator with a kernel satisfying Oinarov’s condition and weighted Stieltjes operator.
	
	Note that the characterization of inequalities
	\begin{align}
	\bigg( \int_0^{\infty} \bigg( \int_x^{\infty} \bigg( \sup_{t \le \tau}
	\frac{u(\tau)}{B(\tau)} \int_0^{\tau} h(y)b(y)\,dy \bigg) \,a(t)\,dt \bigg)^q w(x)\,dx \bigg)^{\frac{1}{q}} \le C \, \bigg( \int_0^{\infty} h(s)^pv(s)\,ds\bigg)^{\frac{1}{p}}, \quad h \in \mp^+ (0,\infty) \label{eq.3}
	\end{align}
	and
	\begin{align}
	\bigg( \int_0^{\infty} \bigg( \int_x^{\infty} \bigg( \sup_{t \le \tau}
	\frac{u(\tau)}{B(\tau)} \int_{\tau}^{\infty} h(y)b(y)\,dy \bigg) \,a(t)\,dt \bigg)^q w(x)\,dx \bigg)^{\frac{1}{q}} \le C \, \bigg( \int_0^{\infty} h(s)^pv(s)\,ds\bigg)^{\frac{1}{p}}, \quad h \in \mp^+ (0,\infty) \label{eq.4}
	\end{align}
	can be reduced to the solutions of \eqref{eq.1} and \eqref{eq.2}.
	
	Recall that, if $F$ is a non-negative non-decreasing function on $\I$, then
	\begin{equation}\label{Fubini.2}	
	\esup_{t \in (0,\infty)} F(t)G(t) = \esup_{t \in (0,\infty)} F(t) \esup_{\tau \in (t,\infty)} G(\tau),
	\end{equation}
	likewise, when $F$ is a non-negative non-increasing function on $\I$, then
	\begin{equation}\label{Fubini.1}
	\esup_{t \in (0,\infty)} F(t)G(t) = \esup_{t \in (0,\infty)} F(t)
	\esup_{\tau \in (0,t)} G(\tau)
	\end{equation}
	(see, for instance, \cite[p. 85]{gp2}).
	
	We need the following notations:
	$$
	\begin{array}{ll}
	A(t) : =\int_0^t a(s)ds, \qq U(t) : =\int_0^t u(s)ds, \qq W(t) : =\int_0^t w(s)ds.
	\end{array}
	$$
			
	\begin{thm}\label{aux.thm.1}
		Let $1 < p,\, q < \infty$. Assume that $u \in \W\I \cap C\I$ and $a,\,v,\,w \in \W\I$. Moreover, assume that
		$$
		0 < \int_0^x v(t)^{1-p'}\,dt < \infty \qquad \mbox{for all} \quad x > 0.
		$$ 
		
		{\rm (i)} If $p \le q$, then
		\begin{align*}
		\sup_{h \ge  0} \frac{\bigg( \int_0^{\infty} \bigg( \int_0^x \bigg( \sup_{t \le \tau}u(\tau) \int_0^{\tau} h(y)\,dy \bigg) a(t)\,dt \bigg)^q w(x)\,dx \bigg)^{\frac{1}{q}}}{\bigg( \int_0^{\infty} h(s)^pv(s)\,ds\bigg)^{\frac{1}{p}}} & \\
		& \hspace{-7cm} \approx \, \sup_{t \in (0,\infty)} \bigg( \int_0^t v(x)^{1-p'} \, \bigg( \int_x^t \bigg( \sup_{s \le \tau}u(\tau)\bigg) a(s)\,ds \bigg)^{p'} \, dx \bigg)^{\frac{1}{p'}} \, \bigg( \int_t^{\infty} w(y) \,dy \bigg)^{\frac{1}{q}} \notag \\
		& \hspace{-6.5cm} + \sup_{t \in (0,\infty)} \bigg( \int_0^t v(x)^{1-p'}  \, dx \bigg)^{\frac{1}{p'}} \, \bigg( \int_t^{\infty} \, \bigg( \int_t^y \bigg( \sup_{s \le \tau}u(\tau)\bigg) a(s)\,ds \bigg)^{q} w(y) \,dy \bigg)^{\frac{1}{q}} \\
		& \hspace{-6.5cm} + \, \sup_{x \in (0,\infty)} \bigg( \int_{[x,\infty)} \, d \, \bigg( - \sup_{t \le \tau} u(\tau)^{p'} \bigg( \int_0^{\tau} v(s)^{1-p'}\,ds \bigg) \bigg) \bigg)^{\frac{1}{p'}} \, \bigg( \int_0^x A(y)^q w(y) \,dy \bigg)^{\frac{1}{q}} \notag \\
		& \hspace{-6.5cm} + \sup_{x \in (0,\infty)} \bigg( \int_{(0,x]} \,  A(t)^{p'} \, d \, \bigg( - \sup_{t \le \tau} u(\tau)^{p'} \bigg( \int_0^{\tau} v(s)^{1-p'}\,ds \bigg) \bigg) \bigg)^{\frac{1}{p'}} \, \bigg( \int_x^{\infty} w(y) \,dy \bigg)^{\frac{1}{q}} \notag \\
		& \hspace{-6.5cm} + \bigg( \int_0^{\infty} A(y)^q w(y) \,dy \bigg)^{\frac{1}{q}} \, \lim_{t \rightarrow \infty} \bigg(\sup_{t \le \tau} u(\tau) \bigg( \int_0^{\tau} v(s)^{1-p'}\,ds \bigg)^{\frac{1}{p'}}\bigg);
		\end{align*}
				
		{\rm (ii)} If $q < p$, then
		\begin{align*}
		\sup_{h \ge  0} \frac{\bigg( \int_0^{\infty} \bigg( \int_0^x \bigg( \sup_{t \le \tau}u(\tau) \int_0^{\tau} h(y)\,dy \bigg) a(t)\,dt \bigg)^q w(x)\,dx \bigg)^{\frac{1}{q}}}{\bigg( \int_0^{\infty} h(s)^pv(s)\,ds\bigg)^{\frac{1}{p}}} & \\
		& \hspace{-7cm} \approx \, \bigg( \int_0^{\infty}  \bigg( \int_0^t v(x)^{1-p'} \,dx \bigg)^{\frac{r}{q'}} v(t)^{1-p'} \, \bigg( \int_t^{\infty} \bigg( \int_t^z \bigg( \sup_{s \le y}u(y)\bigg) a(s)\,ds \bigg)^{q} w(z)\, dz \bigg)^{\frac{r}{q}} \, dt \bigg)^{\frac{1}{r}} \notag \\
		& \hspace{-6.5cm} + \bigg( \int_0^{\infty} \bigg( \int_0^t v(x)^{1-p'} \bigg( \int_x^t \bigg( \sup_{s \le y}u(y)\bigg) a(s)\,ds \bigg)^{p'} \, dx \bigg)^{\frac{r}{p'}} \bigg( \int_z^{\infty} w(s) \,ds \bigg)^{\frac{r}{p}} w(t)\,dt \bigg)^{\frac{1}{r}} \\
		& \hspace{-6.5cm} + \, \bigg( \int_0^{\infty} \bigg( \int_{[x,\infty)} \, d \, \bigg( - \bigg(\sup_{t \le \tau} u(\tau)^{p'} \, \bigg( \int_0^{\tau} v(s)^{1-p'}\,ds \bigg) \bigg) \bigg) \bigg)^{\frac{r}{p'}} 
		\, \bigg( \int_0^x A(y)^q w(y) \,dy \bigg)^{\frac{r}{p}} A(x)^q w(x) \,dx\bigg)^{\frac{1}{r}} \notag \\
		& \hspace{-6.5cm} + \bigg( \int_0^{\infty} \bigg( \int_{(0,x]} \,  A(t)^{p'} \, d \, \bigg( - \bigg(\sup_{t \le \tau} u(\tau)^{p'} \, \bigg( \int_0^{\tau} v(s)^{1-p'}\,ds \bigg) \bigg) \bigg)  \bigg)^{\frac{r}{p'}} \, \bigg( \int_x^{\infty} w(y) \,dy \bigg)^{\frac{r}{p}} w(x)\,dx \bigg)^{\frac{1}{r}} \notag \\
		& \hspace{-6.5cm} + \bigg( \int_0^{\infty} A(y)^q w(y) \,dy \bigg)^{\frac{1}{q}} \, \lim_{t \rightarrow \infty} \bigg(\sup_{t \le \tau} u(\tau) \bigg( \int_0^{\tau} v(s)^{1-p'}\,ds \bigg)^{\frac{1}{p'}}\bigg). 		
		\end{align*}
	\end{thm}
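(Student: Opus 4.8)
The plan is to split $S_uh$ into a ``Hardy piece'' and a ``supremal piece'', to treat the first by the classical theory of Hardy operators with Oinarov kernels, and to reduce the second, via the duality for Ces\`aro function spaces together with the integration-by-parts formulae of Section~\ref{integration by parts}, to a weighted Stieltjes operator. Write $(S_uh)(t):=\sup_{t\le\tau}u(\tau)\int_0^\tau h$ for the operator appearing on the left-hand side, $H(\tau):=\int_0^\tau h$, and $\bar u(t):=\sup_{\tau\ge t}u(\tau)$, which is non-increasing. Since $H(\tau)=H(t)+\int_t^\tau h$ and $H(t)\le H(\tau)$ whenever $\tau\ge t$, one gets at once
\begin{equation*}
\bar u(t)H(t)+\Big(\sup_{\tau\ge t}u(\tau)\int_t^\tau h\Big)\ \ge\ (S_uh)(t)\ \ge\ \tfrac12\Big(\bar u(t)H(t)+\sup_{\tau\ge t}u(\tau)\int_t^\tau h\Big),
\end{equation*}
so, writing $(\tilde S_uh)(t):=\sup_{\tau\ge t}u(\tau)\int_t^\tau h$, the quantity to be computed equals, up to multiplicative constants, the sum of the two operator norms obtained by replacing $S_uh$ with $\bar u(t)H(t)$ and with $(\tilde S_uh)(t)$.

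For the Hardy piece, Fubini's theorem gives $\int_0^x\bar u(t)H(t)a(t)\,dt=\int_0^x\kappa(x,y)h(y)\,dy$ with $\kappa(x,y):=\int_y^x\bar u(s)a(s)\,ds$, and $\kappa(x,y)=\kappa(x,z)+\kappa(z,y)$ for $0<y<z<x$, so $\kappa$ is an Oinarov kernel and $h\mapsto\int_0^\cdot\kappa(\cdot,y)h(y)\,dy$ is a Volterra operator with such a kernel. Hence $\big\|\int_0^\cdot\kappa(\cdot,y)h(y)\,dy\big\|_{q,w,\I}\le C\,\|h\|_{p,v,\I}$ is governed by the classical two-condition criterion for Hardy operators with Oinarov kernels, valid for all $1<p,q<\infty$ (see the references collected in Section~\ref{introduction}). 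Evaluating those two conditions at $\kappa(x,y)=\int_y^x\bar u a$ produces, when $p\le q$, exactly the first two summands on the right-hand side of part~(i) --- the ``dual'' condition $\sup_t\big(\int_t^\infty w\big)^{1/q}\big(\int_0^t\kappa(t,y)^{p'}v(y)^{1-p'}\,dy\big)^{1/p'}$ being the first summand and the ``direct'' condition the second --- and, when $q<p$, the first two summands on the right-hand side of part~(ii), with $1/r=1/q-1/p$.

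For the supremal piece, the duality for Ces\`aro function spaces (Kami\'nska--Kubiak \cite{kamkub}, cf.\ also the associate-norm identities recalled in Section~\ref{introduction}) turns $\big\|\int_0^\cdot(\tilde S_uh)a\big\|_{q,w,\I}\le C\,\|h\|_{p,v,\I}$ into the bilinear inequality
\begin{equation*}
\int_0^\infty\Big(\sup_{\tau\ge t}u(\tau)\int_t^\tau h\Big)a(t)G(t)\,dt\ \le\ C\,\|h\|_{p,v,\I}\,\|g\|_{q',w^{1-q'},\I},\qquad G(t):=\int_t^\infty g(s)\,ds.
\end{equation*}
After the standard linearization of the inner supremum (by a measurable selection $t\mapsto\tau(t)\ge t$) followed by Fubini's theorem and Hölder's inequality in the variable $y$, this reduces to the boundedness of a weighted Stieltjes operator associated with the non-increasing function $\Psi(t):=\sup_{\tau\ge t}u(\tau)^{p'}\int_0^\tau v(s)^{1-p'}\,ds$. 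Here the integration-by-parts formulae of Section~\ref{integration by parts} --- Corollaries~\ref{cor.IBP.0} and~\ref{cor.IBP} together with Remarks~\ref{rem.IBP.0} and~\ref{rem.IBP} --- are exactly what is needed to recast the relevant conditions, passing between their Lebesgue-integral form (integrals in $d\tau$ against $u^{p'}v^{1-p'}$) and their Stieltjes form (integrals against $d(-\Psi)$), and to split off the boundary term at $+\infty$. The known characterization of such a weighted Stieltjes operator --- $p\le q$ giving supremum conditions, $q<p$ giving $r$-integral conditions --- then yields the remaining summands: in part~(i), the third and fourth summands and the limit term $\big(\int_0^\infty A(y)^qw(y)\,dy\big)^{1/q}\lim_{t\to\infty}\big(\sup_{\tau\ge t}u(\tau)(\int_0^\tau v^{1-p'})^{1/p'}\big)$; in part~(ii), the analogous three summands.

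The Hardy piece is routine; the work, and the main obstacle, is the supremal piece. The delicate points there are: (a) carrying out the duality and the linearization of $\sup_{\tau\ge t}u(\tau)\int_t^\tau h$ so that everything collapses to a genuine weighted Stieltjes operator, and verifying the matching lower bounds by suitably concentrated choices of $h$ and $g$; (b) the mass that may escape to $+\infty$, which is precisely why a separate limit term survives --- this is exactly what Remarks~\ref{rem.IBP.0}/\ref{rem.IBP} and Corollaries~\ref{cor.IBP.0}/\ref{cor.IBP} are designed to control, recording when finiteness of $\int_0^\infty(\int_t^\infty g)^\alpha g(t)f(t)\,dt$ forces $\int_0^\infty g<\infty$ and which boundary term then remains; and (c) running the two ranges $1<p\le q<\infty$ and $1<q<p<\infty$ side by side throughout, since both the Oinarov-kernel criterion and the Stieltjes criterion change shape (supremum versus $r$-integral conditions) between them.
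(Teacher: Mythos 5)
Your reduction of the ``Hardy piece'' is fine: $\int_0^x \bar u(t)H(t)a(t)\,dt=\int_0^x\big(\int_y^x\bar u a\big)h(y)\,dy$ is a Volterra operator with an Oinarov kernel, and Oinarov's criteria do give exactly the first two summands in (i) and (ii); this matches the $D$-part of the paper's proof (which reaches the same kernel on the dual side). The proof breaks at the supremal piece. Your claim that the best constant of $h\mapsto\int_0^x(\tilde S_u h)(t)a(t)\,dt$, $\tilde S_uh(t)=\sup_{\tau\ge t}u(\tau)\int_t^\tau h$, is equivalent to the third, fourth and limit summands is false, so the lower-bound half of the theorem (terms $3$--$5\lesssim$ LHS) is not obtained by your route and cannot be: take $u\equiv1$, $a=\chi_{(0,1)}$ (so $A(x)=\min(x,1)$), $v^{1-p'}$ essentially $\chi_{(0,\ve)}$ and $w$ essentially $\chi_{(R,R+1)}$ with $R>1$ (add negligible positive tails so that $v,w\in\W\I$). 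Then $\sup_{t\le\tau}u(\tau)^{p'}\int_0^\tau v^{1-p'}$ is constant, the third and fourth summands vanish, and the limit summand is $\big(\int_0^\infty A^qw\big)^{1/q}\big(\int_0^\infty v^{1-p'}\big)^{1/p'}\approx\ve^{1/p'}$; but for $x\ge1$ one has $\int_0^x(\tilde S_1h)(t)a(t)\,dt=\int_0^\infty h(y)A(y)\,dy$, so the supremal piece's best constant is $\approx\big(\int_0^\infty A^{p'}v^{1-p'}\big)^{1/p'}\approx\ve^{1+1/p'}\ll\ve^{1/p'}$. (The theorem is still consistent here because terms $3$--$5$ are dominated by the Hardy piece, i.e.\ by the first two summands.) The source of the error is visible in your own reduction: for $\tilde S_u$, H\"older must be applied on $(t,\tau)$ and produces $\int_t^\tau v^{1-p'}$, whereas the Stieltjes function $\Psi(t)=\sup_{\tau\ge t}u(\tau)^{p'}\int_0^\tau v^{1-p'}$, with the integral starting at $0$, is attached to the \emph{full} operator $S_u$; using it for the difference piece gives only an upper bound, and the ``matching lower bounds by concentrated $h$ and $g$'' you invoke do not exist.

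The paper's proof splits the problem differently, and the difference is essential: the decomposition into an Oinarov-kernel part and a Stieltjes part is made \emph{after} dualizing in $g$, at the level of the bilinear form $\int_0^\infty (S_uh)(t)\,a(t)\big(\int_t^\infty g\big)dt$, by applying \cite[Theorem 4.4]{gop} to the full supremum operator (this is where the two blocks $D$ and $E$, and in particular the function $\sup_{t\le\tau}u(\tau)^{p'}\int_0^\tau v^{1-p'}$, legitimately arise). Corollaries \ref{cor.IBP.0} and \ref{cor.IBP} then convert $D$ and $E$ into a dual Volterra norm and Stieltjes-type functionals of $g$ plus a rank-one term, and the suprema in $g$ are evaluated by Oinarov's theorems (first two summands), Maz'ya's weighted Hardy inequalities with measures (third and fourth), and a one-line duality (limit term). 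If you want to keep your operator-level splitting, you must abandon the claim that terms $3$--$5$ characterize $\tilde S_u$ and instead prove the lower bound for these terms directly against the full operator $S_u$ (or follow the paper's dual-side decomposition); as written, the argument for the remaining summands is unfounded.
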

	
	\begin{proof}
		Assume that $1 < p \le q < \infty$. 
		By duality, using Fubini's Theorem, and interchanging the suprema, we get that 
		\begin{align*}
		\sup_{h \ge  0} \frac{\bigg( \int_0^{\infty} \bigg( \int_0^x \bigg( \sup_{t \le \tau}
			u(\tau) \int_0^{\tau} h(y)\,dy \bigg) a(t)\,dt \bigg)^q w(x)\,dx \bigg)^{\frac{1}{q}}}{\bigg( \int_0^{\infty} h(s)^pv(s)\,ds\bigg)^{\frac{1}{p}}} & \\
		& \hspace{-5cm} = \sup_{h \ge  0} \frac{1}{\bigg( \int_0^{\infty} h(s)^pv(s)\,ds\bigg)^{\frac{1}{p}}} \sup_{g \ge  0}\frac{ \int_0^{\infty} \bigg( \int_0^x \bigg( \sup_{t \le \tau}	u(\tau) \int_0^{\tau} h(y)\,dy \bigg) a(t)\, dt \bigg) g(x)\,dx }{\bigg( \int_0^{\infty} g(x)^{q'}w(x)^{1-q'}\,dx\bigg)^{\frac{1}{q'}}} \\
		& \hspace{-5cm} = \sup_{g \ge  0} \frac{1}{\bigg( \int_0^{\infty} g(x)^{q'}w(x)^{1-q'}\,dx\bigg)^{\frac{1}{q'}}} \sup_{h \ge  0}\frac{ \int_0^{\infty}  \bigg( \sup_{t \le \tau}	u(\tau) \int_0^{\tau} h(y)\,dy \bigg) \bigg( \int_t^{\infty} g(x)\,dx \bigg) a(t)\,dt }{\bigg( \int_0^{\infty} h(s)^pv(s)\,ds\bigg)^{\frac{1}{p}}}.
		\end{align*}
		
		Applying \cite[Theorems 4.4]{gop}, on using \eqref{Fubini.2}, we arrive at
		\begin{align*}
		\sup_{h \ge  0}\frac{ \int_0^{\infty}  \bigg( \sup_{t \le \tau}	u(\tau) \int_0^{\tau} h(y)\,dy \bigg) \bigg( \int_t^{\infty} g(x)\,dx \bigg) a(t)\,dt }{\bigg( \int_0^{\infty} h(s)^pv(s)\,ds\bigg)^{\frac{1}{p}}} & \approx D + E,
		\end{align*}
		where
		\begin{align*}
		D & : = \bigg( \int_0^{\infty} \bigg( \int_t^{\infty} \bigg( \sup_{s \le \tau}u(\tau)\bigg) \bigg( \int_s^{\infty} g(x)\,dx \bigg) a(s)\,ds \bigg)^{\frac{p'}{p}} \bigg( \sup_{t \le \tau}u(\tau)\bigg) \bigg( \int_0^t v(s)^{1-p'}\,ds \bigg) \bigg( \int_t^{\infty} g(x)\,dx \bigg)a(t)\,dt \bigg)^{\frac{1}{p'}}, \\
		E & : = \bigg( \int_0^{\infty} \bigg( \int_0^t \bigg( \int_s^{\infty} g(x)\,dx \bigg) a(s)\,ds \bigg)^{\frac{p'}{p}} \bigg(\sup_{t \le \tau} u(\tau)^{p'} \bigg( \int_0^{\tau} v(s)^{1-p'}\,ds \bigg) \bigg) \bigg( \int_t^{\infty} g(x)\,dx \bigg)a(t)\,dt \bigg)^{\frac{1}{p'}}.
		\end{align*}
		
		Integrating by parts (applying Corollary \ref{cor.IBP}), on using Fubini's Theorem, we arrive at
		\begin{align*}
		D & \approx \bigg( \int_0^{\infty} \bigg( \int_t^{\infty} \bigg( \sup_{s \le \tau}u(\tau)\bigg) \bigg( \int_s^{\infty} g(x)\,dx \bigg) a(s)\,ds \bigg)^{p'} \, v(t)^{1-p'}\,dt  \bigg)^{\frac{1}{p'}} \\
        & = \bigg( \int_0^{\infty} \bigg( \int_t^{\infty} g(x) \int_t^x \bigg( \sup_{s \le \tau}u(\tau)\bigg) a(s)\,ds \,dx \bigg)^{p'} \, v(t)^{1-p'} \,dt \bigg)^{\frac{1}{p'}}.
		\end{align*}
		Similarly, integrating by parts (applying Corollary \ref{cor.IBP.0}), on using Fubini's Theorem, we get at
		\begin{align*}
		E \approx & \,\, \bigg( \int_0^{\infty} \bigg(\sup_{t \le \tau} u(\tau)^{p'} \,  \bigg(\int_0^{\tau} v(s)^{1-p'}\,ds \bigg) \bigg)  \,d \, \bigg( \int_0^t \bigg( \int_s^{\infty} g(x)\,dx \bigg) a(s)\,ds \bigg)^{p'} \bigg)^{\frac{1}{p'}} \\
		\approx &  \,\, \bigg( \int_0^{\infty} \bigg( \int_0^t \bigg( \int_s^{\infty} g(x)\,dx \bigg) a(s)\,ds \bigg)^{p'}
		\,d \, \bigg( - \bigg(\sup_{t \le \tau} u(\tau)^{p'} \, \bigg( \int_0^{\tau} v(s)^{1-p'}\,ds \bigg) \bigg) \bigg)  \bigg)^{\frac{1}{p'}} \\
		& + \bigg( \int_0^{\infty} \bigg( \int_s^{\infty} g(x)\,dx \bigg) a(s)\,ds \bigg) \lim_{t \rightarrow \infty}
		\bigg(\sup_{t \le \tau} u(\tau)^{p'} \, \bigg( \int_0^{\tau} v(s)^{1-p'}\,ds \bigg) \bigg) \\
		\approx &  \,\, \bigg( \int_0^{\infty} \bigg( \int_0^t  g(x)A(x)\,dx \bigg)^{p'}
		\,d \, \bigg( - \bigg(\sup_{t \le \tau} u(\tau)^{p'} \, \bigg( \int_0^{\tau} v(s)^{1-p'}\,ds \bigg) \bigg) \bigg) \bigg)^{\frac{1}{p'}} \\
		& + \bigg( \int_0^{\infty} \bigg( \int_t^{\infty} g(x)\,dx \bigg)^{p'} A(t)^{p'}
		\,d \, \bigg( - \bigg(\sup_{t \le \tau} u(\tau)^{p'} \, \bigg( \int_0^{\tau} v(s)^{1-p'}\,ds \bigg) \bigg) \bigg)  \bigg)^{\frac{1}{p'}} \\
		& + \bigg( \int_0^{\infty} g(x)A(x)\,dx  \bigg) \lim_{t \rightarrow \infty}
		\bigg(\sup_{t \le \tau} u(\tau) \, \bigg( \int_0^{\tau} v(s)^{1-p'}\,ds \bigg)^{\frac{1}{p'}} \bigg) : = E_1 + E_2 + E_3.
		\end{align*}
		
		{\rm (i)} Let $p \le q$. By \cite[Theorem 1.1]{Oinar}, we obtain that
		\begin{align}
		\sup_{g \ge  0} \frac{ D }{\bigg( \int_0^{\infty} g^{q'}w^{1-q'}\bigg)^{\frac{1}{q'}}} & \notag \\
		& \hspace{-3cm} = \, \sup_{g \ge  0} \frac{1}{\bigg( \int_0^{\infty} g^{q'}w^{1-q'}\bigg)^{\frac{1}{q'}}}
		\bigg( \int_0^{\infty} \bigg( \int_t^{\infty} g(x) \int_t^x \bigg( \sup_{s \le y}u(y)\bigg) a(s)\,ds \,dx \bigg)^{p'} \, v(t)^{1-p'} \,dt \bigg)^{\frac{1}{p'}} \notag \\
		& \hspace{-3cm} \approx \, \sup_{t \in (0,\infty)} \bigg( \int_0^t v(x)^{1-p'} \, \bigg( \int_x^t \bigg( \sup_{s \le y}u(y)\bigg) a(s)\,ds \bigg)^{p'} \, dx \bigg)^{\frac{1}{p'}} \, \bigg( \int_t^{\infty} w(y) \,dy \bigg)^{\frac{1}{q}} \notag \\
		& \hspace{-2.5cm} + \sup_{t \in (0,\infty)} \bigg( \int_0^t v(x)^{1-p'}  \, dx \bigg)^{\frac{1}{p'}} \, \bigg( \int_t^{\infty} \, \bigg( \int_t^z \bigg( \sup_{s \le y}u(y)\bigg) a(s)\,ds \bigg)^{q} w(z) \,dz \bigg)^{\frac{1}{q}}. \label{eq.I}
		\end{align}
		
		By \cite[Theorem 1, p. 40 and Theorem 3, p. 44]{mazya}, respectively, we have that 
		\begin{align*}
		\sup_{g \ge  0} \frac{ E_1 }{\bigg( \int_0^{\infty} g^{q'}w^{1-q'}\bigg)^{\frac{1}{q'}}} & \\
		& \hspace{-3cm} = \sup_{g \ge  0} \frac{\bigg( \int_0^{\infty} \bigg( \int_0^t  g(x)A(x)\,dx \bigg)^{p'} \,d \, \bigg( - \bigg(\sup_{t \le \tau} u(\tau)^{p'} \, \bigg( \int_0^{\tau} v(s)^{1-p'}\,ds \bigg) \bigg) \bigg)  \bigg)^{\frac{1}{p'}}}{\bigg( \int_0^{\infty} g^{q'}w^{1-q'}\bigg)^{\frac{1}{q'}}}
		 \\
		& \hspace{-3cm} \approx \sup_{x \in (0,\infty)} \bigg( \int_{[x,\infty)} \, d \, \bigg( - \bigg(\sup_{t \le \tau} u(\tau)^{p'} \, \bigg( \int_0^{\tau} v(s)^{1-p'}\,ds \bigg) \bigg) \bigg) \bigg)^{\frac{1}{p'}} \, \bigg( \int_0^x A(y)^q w(y) \,dy \bigg)^{\frac{1}{q}}
		\end{align*}
		and
		\begin{align*}
		\sup_{g \ge  0} \frac{ E_2 }{\bigg( \int_0^{\infty} g^{q'}w^{1-q'}\bigg)^{\frac{1}{q'}}} & \\
		& \hspace{-3cm} = \sup_{g \ge  0} \frac{\bigg( \int_0^{\infty} \bigg( \int_t^{\infty} g(x)\,dx \bigg)^{p'} A(t)^{p'}
		\,d \, \bigg( - \bigg(\sup_{t \le \tau} u(\tau)^{p'} \, \bigg( \int_0^{\tau} v(s)^{1-p'}\,ds \bigg) \bigg) \bigg)  \bigg)^{\frac{1}{p'}}}{\bigg( \int_0^{\infty} g^{q'}w^{1-q'}\bigg)^{\frac{1}{q'}}} \\
		& \hspace{-3cm} \approx \sup_{x \in (0,\infty)} \bigg( \int_{(0,x]} \,  A(t)^{p'} \, d \, \bigg( - \bigg(\sup_{t \le \tau} u(\tau)^{p'} \, \bigg( \int_0^{\tau} v(s)^{1-p'}\,ds \bigg) \bigg) \bigg)  \bigg)^{\frac{1}{p'}} \, \bigg( \int_x^{\infty} w(y) \,dy \bigg)^{\frac{1}{q}}.
		\end{align*}
		
		By duality, we have that
		\begin{align*}
		\sup_{g \ge  0} \frac{ E_3 }{\bigg( \int_0^{\infty} g^{q'}w^{1-q'}\bigg)^{\frac{1}{q'}}} & \\
		& \hspace{-3cm} = \sup_{g \ge  0} \frac{\int_0^{\infty} g(x)A(x)\,dx }{\bigg( \int_0^{\infty} g^{q'}w^{1-q'}\bigg)^{\frac{1}{q'}}} \cdot \lim_{t \rightarrow \infty} \bigg(\sup_{t \le \tau} u(\tau) \bigg( \int_0^{\tau} v(s)^{1-p'}\,ds \bigg)^{\frac{1}{p'}}\bigg) \\
		& \hspace{-3cm} = \bigg( \int_0^{\infty} A(y)^q w(y) \,dy \bigg)^{\frac{1}{q}} \, \lim_{t \rightarrow \infty} \bigg(\sup_{t \le \tau} u(\tau) \bigg( \int_0^{\tau} v(s)^{1-p'}\,ds \bigg)^{\frac{1}{p'}}\bigg).
		\end{align*}
		
		Thus, we get that
		\begin{align}
		\sup_{g \ge  0} \frac{E}{\bigg( \int_0^{\infty} g^{q'}w^{1-q'}\bigg)^{\frac{1}{q'}}} & \notag \\
		& \hspace{-3cm} \approx \, \sup_{x \in (0,\infty)} \bigg( \int_{[x,\infty)} \, d \, \bigg( - \sup_{t \le \tau} u(\tau)^{p'} \bigg( \int_0^{\tau} v(s)^{1-p'}\,ds \bigg) \bigg) \bigg)^{\frac{1}{p'}} \, \bigg( \int_0^x A(y)^q w(y) \,dy \bigg)^{\frac{1}{q}} \notag \\
		& \hspace{-2.5cm} + \sup_{x \in (0,\infty)} \bigg( \int_{(0,x]} \,  A(t)^{p'} \, d \, \bigg( - \sup_{t \le \tau} u(\tau)^{p'} \bigg( \int_0^{\tau} v(s)^{1-p'}\,ds \bigg) \bigg) \bigg)^{\frac{1}{p'}} \, \bigg( \int_x^{\infty} w(y) \,dy \bigg)^{\frac{1}{q}} \notag \\
		& \hspace{-2.5cm} + \bigg( \int_0^{\infty} A(y)^q w(y) \,dy \bigg)^{\frac{1}{q}} \, \lim_{t \rightarrow \infty} \bigg(\sup_{t \le \tau} u(\tau) \bigg( \int_0^{\tau} v(s)^{1-p'}\,ds \bigg)^{\frac{1}{p'}}\bigg). \label{eq.II}
		\end{align}
		
		Combining \eqref{eq.I} and \eqref{eq.II}, we arrive at
		\begin{align*}
		\sup_{h \ge  0} \frac{\bigg( \int_0^{\infty} \bigg( \int_0^x \bigg( \sup_{t \le \tau}u(\tau) \int_0^{\tau} h(y)\,dy \bigg) a(t)\,dt \bigg)^q w(x)\,dx \bigg)^{\frac{1}{q}}}{\bigg( \int_0^{\infty} h(s)^pv(s)\,ds\bigg)^{\frac{1}{p}}} & \\
		& \hspace{-7cm} \approx \, \sup_{t \in (0,\infty)} \bigg( \int_0^t v(x)^{1-p'} \, \bigg( \int_x^t \bigg( \sup_{s \le \tau}u(\tau)\bigg) a(s)\,ds \bigg)^{p'} \, dx \bigg)^{\frac{1}{p'}} \, \bigg( \int_t^{\infty} w(y) \,dy \bigg)^{\frac{1}{q}} \notag \\
		& \hspace{-6.5cm} + \sup_{t \in (0,\infty)} \bigg( \int_0^t v(x)^{1-p'}  \, dx \bigg)^{\frac{1}{p'}} \, \bigg( \int_t^{\infty} \, \bigg( \int_t^y \bigg( \sup_{s \le \tau}u(\tau)\bigg) a(s)\,ds \bigg)^{q} w(y) \,dy \bigg)^{\frac{1}{q}} \\
		& \hspace{-6.5cm} + \, \sup_{x \in (0,\infty)} \bigg( \int_{[x,\infty)} \, d \, \bigg( - \sup_{t \le \tau} u(\tau)^{p'} \bigg( \int_0^{\tau} v(s)^{1-p'}\,ds \bigg) \bigg) \bigg)^{\frac{1}{p'}} \, \bigg( \int_0^x A(y)^q w(y) \,dy \bigg)^{\frac{1}{q}} \notag \\
		& \hspace{-6.5cm} + \sup_{x \in (0,\infty)} \bigg( \int_{(0,x]} \,  A(t)^{p'} \, d \, \bigg( - \sup_{t \le \tau} u(\tau)^{p'} \bigg( \int_0^{\tau} v(s)^{1-p'}\,ds \bigg) \bigg) \bigg)^{\frac{1}{p'}} \, \bigg( \int_x^{\infty} w(y) \,dy \bigg)^{\frac{1}{q}} \notag \\
		& \hspace{-6.5cm} + \bigg( \int_0^{\infty} A(y)^q w(y) \,dy \bigg)^{\frac{1}{q}} \, \lim_{t \rightarrow \infty} \bigg(\sup_{t \le \tau} u(\tau) \bigg( \int_0^{\tau} v(s)^{1-p'}\,ds \bigg)^{\frac{1}{p'}}\bigg).
		\end{align*}
		
		{\rm (ii)} Let now $q < p$. By \cite[Theorem 1.2]{Oinar}, we obtain that
		\begin{align}
		\sup_{g \ge  0} \frac{ D }{\bigg( \int_0^{\infty} g^{q'}w^{1-q'}\bigg)^{\frac{1}{q'}}} & \notag \\
		& \hspace{-3cm} = \, \sup_{g \ge  0} \frac{\bigg( \int_0^{\infty} \bigg( \int_t^{\infty} g(x) \int_t^x \bigg( \sup_{s \le y}u(y)\bigg) a(s)\,ds \,dx \bigg)^{p'} \, v(t)^{1-p'} \,dt \bigg)^{\frac{1}{p'}} }{\bigg( \int_0^{\infty} g^{q'}w^{1-q'}\bigg)^{\frac{1}{q'}}} \notag \\
		& \hspace{-3cm} \approx \, \bigg( \int_0^{\infty}  \bigg( \int_0^t v(x)^{1-p'} \bigg)^{\frac{r}{q'}} v(t)^{1-p'} \, \bigg( \int_t^{\infty} \bigg( \int_t^z \bigg( \sup_{s \le y}u(y)\bigg) a(s)\,ds \bigg)^{q} w(z)\, dz \bigg)^{\frac{r}{q}} \, dt \bigg)^{\frac{1}{r}} \notag \\
		& \hspace{-2.5cm} + \bigg( \int_0^{\infty} \bigg( \int_0^t v(x)^{1-p'} \bigg( \int_x^t \bigg( \sup_{s \le y}u(y)\bigg) a(s)\,ds \bigg)^{p'} \, dx \bigg)^{\frac{r}{p'}} \bigg( \int_t^{\infty} w(s) \,ds \bigg)^{\frac{r}{p}} w(t)\,dt \bigg)^{\frac{1}{r}}. \label{eq.I.0}
		\end{align}
		
		By \cite[Theorem 2, p. 48]{mazya}, we have that 
		\begin{align*}
		\sup_{g \ge  0} \frac{ E_1 }{\bigg( \int_0^{\infty} g^{q'}w^{1-q'}\bigg)^{\frac{1}{q'}}} & \\
		& \hspace{-3cm} = \sup_{g \ge  0} \frac{\bigg( \int_0^{\infty} \bigg( \int_0^t  g(x)A(x)\,dx \bigg)^{p'} \,d \, \bigg( - \bigg(\sup_{t \le \tau} u(\tau)^{p'} \, \bigg( \int_0^{\tau} v(s)^{1-p'}\,ds \bigg) \bigg) \bigg)  \bigg)^{\frac{1}{p'}}}{\bigg( \int_0^{\infty} g^{q'}w^{1-q'}\bigg)^{\frac{1}{q'}}} \\
		& \hspace{-3cm} \approx \bigg( \int_0^{\infty} \bigg( \int_{[x,\infty)} \, d \, \bigg( - \bigg(\sup_{t \le \tau} u(\tau)^{p'} \, \bigg( \int_0^{\tau} v(s)^{1-p'}\,ds \bigg) \bigg) \bigg) \bigg)^{\frac{r}{p'}} \, \bigg( \int_0^x A(y)^q w(y) \,dy \bigg)^{\frac{r}{p}} A(x)^q w(x) \,dx\bigg)^{\frac{1}{r}}
		\end{align*}
		and
		\begin{align*}
		\sup_{g \ge  0} \frac{ E_2 }{\bigg( \int_0^{\infty} g^{q'}w^{1-q'}\bigg)^{\frac{1}{q'}}} & \\
		& \hspace{-3cm} = \sup_{g \ge  0} \frac{\bigg( \int_0^{\infty} \bigg( \int_t^{\infty} g(x)\,dx \bigg)^{p'} A(t)^{p'}
		\,d \, \bigg( - \bigg(\sup_{t \le \tau} u(\tau)^{p'} \, \bigg( \int_0^{\tau} v(s)^{1-p'}\,ds \bigg) \bigg) \bigg)  \bigg)^{\frac{1}{p'}}}{\bigg( \int_0^{\infty} g^{q'}w^{1-q'}\bigg)^{\frac{1}{q'}}} \\
		& \hspace{-3cm} \approx \bigg( \int_0^{\infty} \bigg( \int_{(0,x]} \,  A(t)^{p'} \, d \, \bigg( - \bigg(\sup_{t \le \tau} u(\tau)^{p'} \, \bigg( \int_0^{\tau} v(s)^{1-p'}\,ds \bigg) \bigg) \bigg)  \bigg)^{\frac{r}{p'}} \, \bigg( \int_x^{\infty} w(y) \,dy \bigg)^{\frac{r}{p}} w(x)\,dx \bigg)^{\frac{1}{r}}.
		\end{align*}
		
		Consequently, we arrive at
		\begin{align}
		\sup_{g \ge  0} \frac{E}{\bigg( \int_0^{\infty} g^{q'}w^{1-q'}\bigg)^{\frac{1}{q'}}} & \notag \\
		& \hspace{-3cm} \approx \, \bigg( \int_0^{\infty} \bigg( \int_{[x,\infty)} \, d \, \bigg( - \bigg(\sup_{t \le \tau} u(\tau)^{p'} \, \bigg( \int_0^{\tau} v(s)^{1-p'}\,ds \bigg) \bigg) \bigg) \bigg)^{\frac{r}{p'}} 
		\, \bigg( \int_0^x A(y)^q w(y) \,dy \bigg)^{\frac{r}{p}} A(x)^q w(x) \,dx\bigg)^{\frac{1}{r}} \notag \\
		& \hspace{-2.5cm} + \bigg( \int_0^{\infty} \bigg( \int_{(0,x]} \,  A(t)^{p'} \, d \, \bigg( - \bigg(\sup_{t \le \tau} u(\tau)^{p'} \, \bigg( \int_0^{\tau} v(s)^{1-p'}\,ds \bigg) \bigg) \bigg)  \bigg)^{\frac{r}{p'}} \, \bigg( \int_x^{\infty} w(y) \,dy \bigg)^{\frac{r}{p}} w(x)\,dx \bigg)^{\frac{1}{r}} \notag \\
		& \hspace{-2.5cm} + \bigg( \int_0^{\infty} A(y)^q w(y) \,dy \bigg)^{\frac{1}{q}} \, \lim_{t \rightarrow \infty} \bigg(\sup_{t \le \tau} u(\tau) \bigg( \int_0^{\tau} v(s)^{1-p'}\,ds \bigg)^{\frac{1}{p'}}\bigg). \label{eq.II.0}		
		\end{align}
		
		Combining \eqref{eq.I.0} and \eqref{eq.II.0}, we arrive at
		\begin{align*}
		\sup_{h \ge  0} \frac{\bigg( \int_0^{\infty} \bigg( \int_0^x \bigg( \sup_{t \le \tau}u(\tau) \int_0^{\tau} h(y)\,dy \bigg) a(t)\,dt \bigg)^q w(x)\,dx \bigg)^{\frac{1}{q}}}{\bigg( \int_0^{\infty} h(s)^pv(s)\,ds\bigg)^{\frac{1}{p}}} & \\
		& \hspace{-7cm} \approx \,  \bigg( \int_0^{\infty}  \bigg( \int_0^t v(x)^{1-p'} \bigg)^{\frac{r}{q'}} v(t)^{1-p'} \, \bigg( \int_t^{\infty} \bigg( \int_t^z \bigg( \sup_{s \le y}u(y)\bigg) a(s)\,ds \bigg)^{q} w(z)\, dz \bigg)^{\frac{r}{q}} \, dt \bigg)^{\frac{1}{r}} \notag \\
		& \hspace{-6.5cm} + \bigg( \int_0^{\infty} \bigg( \int_0^t v(x)^{1-p'} \bigg( \int_x^t \bigg( \sup_{s \le y}u(y)\bigg) a(s)\,ds \bigg)^{p'} \, dx \bigg)^{\frac{r}{p'}} \bigg( \int_t^{\infty} w(s) \,ds \bigg)^{\frac{r}{p}} w(t)\,dt \bigg)^{\frac{1}{r}} \notag \\
		& \hspace{-6.5cm} + \, \bigg( \int_0^{\infty} \bigg( \int_{[x,\infty)} \, d \, \bigg( - \bigg(\sup_{t \le \tau} u(\tau)^{p'} \, \bigg( \int_0^{\tau} v(s)^{1-p'}\,ds \bigg) \bigg) \bigg) \bigg)^{\frac{r}{p'}} 
		\, \bigg( \int_0^x A(y)^q w(y) \,dy \bigg)^{\frac{r}{p}} A(x)^q w(x) \,dx\bigg)^{\frac{1}{r}} \notag \\
		& \hspace{-6.5cm} + \bigg( \int_0^{\infty} \bigg( \int_{(0,x]} \,  A(t)^{p'} \, d \, \bigg( - \bigg(\sup_{t \le \tau} u(\tau)^{p'} \, \bigg( \int_0^{\tau} v(s)^{1-p'}\,ds \bigg) \bigg) \bigg)  \bigg)^{\frac{r}{p'}} \, \bigg( \int_x^{\infty} w(y) \,dy \bigg)^{\frac{r}{p}} w(x)\,dx \bigg)^{\frac{1}{r}} \notag \\
		& \hspace{-6.5cm} + \bigg( \int_0^{\infty} A(y)^q w(y) \,dy \bigg)^{\frac{1}{q}} \, \lim_{t \rightarrow \infty} \bigg(\sup_{t \le \tau} u(\tau) \bigg( \int_0^{\tau} v(s)^{1-p'}\,ds \bigg)^{\frac{1}{p'}}\bigg). 		
		\end{align*}
		
		The proof is completed. 
	\end{proof}

	\begin{thm}\label{aux.thm.1.1}
	Let $1 < p,\, q < \infty$ and $b \in \W\I$ be such that $b(t) > 0$ for a.e. $t\in (0,\infty)$. Assume that $u \in \W\I \cap C\I$ and $a,\,v,\,w \in \W\I$. Moreover, assume that
	$$
	0 < \int_0^x v(t)^{1-p'}\,dt < \infty \qquad \mbox{for all} \quad x > 0.
	$$ 
	
	{\rm (i)} If $p \le q$, then
	\begin{align*}
	\sup_{h \ge  0} \frac{\bigg( \int_0^{\infty} \bigg( \int_0^x \bigg( \sup_{t \le \tau} \frac{u(\tau)}{B(\tau)} \int_0^{\tau} h(y)b(y)\,dy \bigg) a(t)\,dt \bigg)^q w(x)\,dx \bigg)^{\frac{1}{q}}}{\bigg( \int_0^{\infty} h(s)^pv(s)\,ds\bigg)^{\frac{1}{p}}} & \\
	& \hspace{-7cm} \approx \, \sup_{t \in (0,\infty)} \bigg( \int_0^t b(x)^{p'}v(x)^{1-p'} \, \bigg( \int_x^t \bigg( \sup_{s \le \tau}\frac{u(\tau)}{B(\tau)}\bigg) a(s)\,ds \bigg)^{p'} \, dx \bigg)^{\frac{1}{p'}} \, \bigg( \int_t^{\infty} w(y) \,dy \bigg)^{\frac{1}{q}} \notag \\
	& \hspace{-6.5cm} + \sup_{t \in (0,\infty)} \bigg( \int_0^t b(x)^{p'}v(x)^{1-p'}  \, dx \bigg)^{\frac{1}{p'}} \, \bigg( \int_t^{\infty} \, \bigg( \int_t^y \bigg( \sup_{s \le \tau}\frac{u(\tau)}{B(\tau)}\bigg) a(s)\,ds \bigg)^{q} w(y) \,dy \bigg)^{\frac{1}{q}} \\
	& \hspace{-6.5cm} + \, \sup_{x \in (0,\infty)} \bigg( \int_{[x,\infty)} \, d \, \bigg( - \sup_{t \le \tau} \bigg(\frac{u(\tau)}{B(\tau)}\bigg)^{p'} \bigg( \int_0^{\tau} b(s)^{p'} v(s)^{1-p'}\,ds \bigg) \bigg) \bigg)^{\frac{1}{p'}} \, \bigg( \int_0^x A(y)^q w(y) \,dy \bigg)^{\frac{1}{q}} \notag \\
	& \hspace{-6.5cm} + \sup_{x \in (0,\infty)} \bigg( \int_{(0,x]} \,  A(t)^{p'} \, d \, \bigg( - \sup_{t \le \tau} \bigg(\frac{u(\tau)}{B(\tau)}\bigg)^{p'} \bigg( \int_0^{\tau} b(s)^{p'}v(s)^{1-p'}\,ds \bigg) \bigg) \bigg)^{\frac{1}{p'}} \, \bigg( \int_x^{\infty} w(y) \,dy \bigg)^{\frac{1}{q}} \notag \\
	& \hspace{-6.5cm} + \bigg( \int_0^{\infty} A(y)^q w(y) \,dy \bigg)^{\frac{1}{q}} \, \lim_{t \rightarrow \infty} \bigg(\sup_{t \le \tau} \frac{u(\tau)}{B(\tau)} \bigg( \int_0^{\tau} b(s)^{p'}v(s)^{1-p'}\,ds \bigg)^{\frac{1}{p'}}\bigg);
	\end{align*}
	
	{\rm (ii)} If $q < p$, then
	\begin{align*}
	\sup_{h \ge  0} \frac{\bigg( \int_0^{\infty} \bigg( \int_0^x \bigg( \sup_{t \le \tau}\frac{u(\tau)}{B(\tau)} \int_0^{\tau} h(y)b(y)\,dy \bigg) a(t)\,dt \bigg)^q w(x)\,dx \bigg)^{\frac{1}{q}}}{\bigg( \int_0^{\infty} h(s)^pv(s)\,ds\bigg)^{\frac{1}{p}}} & \\
	& \hspace{-7cm} \approx \, \bigg( \int_0^{\infty}  \bigg( \int_0^t b(x)^{p'}v(x)^{1-p'} \,dx \bigg)^{\frac{r}{q'}} b(t)^{p'}v(t)^{1-p'} \, \bigg( \int_t^{\infty} \bigg( \int_t^z \bigg( \sup_{s \le y}\frac{u(y)}{B(y)}\bigg) a(s)\,ds \bigg)^{q} w(z)\, dz \bigg)^{\frac{r}{q}} \, dt \bigg)^{\frac{1}{r}} \notag \\
	& \hspace{-6.5cm} + \bigg( \int_0^{\infty} \bigg( \int_0^t b(x)^{p'}v(x)^{1-p'} \bigg( \int_x^t \bigg( \sup_{s \le y}\frac{u(y)}{B(y)}\bigg) a(s)\,ds \bigg)^{p'} \, dx \bigg)^{\frac{r}{p'}} \bigg( \int_z^{\infty} w(s) \,ds \bigg)^{\frac{r}{p}} w(t)\,dt \bigg)^{\frac{1}{r}} \\
	& \hspace{-6.5cm} + \, \bigg( \int_0^{\infty} \bigg( \int_{[x,\infty)} \, d \, \bigg( - \bigg(\sup_{t \le \tau} \bigg(\frac{u(\tau)}{B(\tau)}\bigg)^{p'} \, \bigg( \int_0^{\tau} b(s)^{p'}v(s)^{1-p'}\,ds \bigg) \bigg) \bigg) \bigg)^{\frac{r}{p'}} 
	\, \bigg( \int_0^x A(y)^q w(y) \,dy \bigg)^{\frac{r}{p}} A(x)^q w(x) \,dx\bigg)^{\frac{1}{r}} \notag \\
	& \hspace{-6.5cm} + \bigg( \int_0^{\infty} \bigg( \int_{(0,x]} \,  A(t)^{p'} \, d \, \bigg( - \bigg(\sup_{t \le \tau} \bigg(\frac{u(\tau)}{B(\tau)}\bigg)^{p'} \, \bigg( \int_0^{\tau} b(s)^{p'}v(s)^{1-p'}\,ds \bigg) \bigg) \bigg)  \bigg)^{\frac{r}{p'}} \, \bigg( \int_x^{\infty} w(y) \,dy \bigg)^{\frac{r}{p}} w(x)\,dx \bigg)^{\frac{1}{r}} \notag \\
	& \hspace{-6.5cm} + \bigg( \int_0^{\infty} A(y)^q w(y) \,dy \bigg)^{\frac{1}{q}} \, \lim_{t \rightarrow \infty} \bigg(\sup_{t \le \tau} \frac{u(\tau)}{B(\tau)} \bigg( \int_0^{\tau} b(s)^{p'}v(s)^{1-p'}\,ds \bigg)^{\frac{1}{p'}}\bigg). 		
	\end{align*}
\end{thm}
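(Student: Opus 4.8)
The plan is to reduce Theorem~\ref{aux.thm.1.1} to Theorem~\ref{aux.thm.1} by the substitution $H := hb$. Since $b(t) > 0$ for a.e. $t \in (0,\infty)$, the map $h \mapsto hb$ is a bijection of $\M^+\I$ onto itself, and $\int_0^\infty h(s)^p v(s)\,ds = \int_0^\infty H(s)^p \tilde v(s)\,ds$ with $\tilde v := b^{-p}v$. Writing $\tilde u := u/B$, the inner expression $\sup_{t\le\tau}\frac{u(\tau)}{B(\tau)}\int_0^\tau h(y)b(y)\,dy$ becomes $\sup_{t\le\tau}\tilde u(\tau)\int_0^\tau H(y)\,dy$, so the left-hand side of Theorem~\ref{aux.thm.1.1} is precisely the left-hand side of Theorem~\ref{aux.thm.1} with $u,v$ replaced by $\tilde u,\tilde v$. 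A one-line computation gives $\tilde v^{1-p'} = (b^{-p}v)^{1-p'} = b^{p'}v^{1-p'}$ (because $p(p'-1) = p'$), and hence $\tilde u(\tau)^{p'}\int_0^\tau \tilde v^{1-p'} = (u(\tau)/B(\tau))^{p'}\int_0^\tau b^{p'}v^{1-p'}$; inserting these identities into the right-hand sides of parts (i) and (ii) of Theorem~\ref{aux.thm.1} reproduces, term by term, the right-hand sides claimed here. This is the whole argument, modulo the transfer of hypotheses.

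Alternatively---and this is the same computation written out rather than packaged in a substitution---one repeats the proof of Theorem~\ref{aux.thm.1} verbatim, keeping the weight $b$ along: after dualizing in the outer $\ces_q(w,a)$-norm, applying Fubini's Theorem and interchanging the suprema via \eqref{Fubini.2}, one is reduced to estimating $\int_0^\infty (T_{u,b}h)(t)\big(\int_t^\infty g(x)\,dx\big)a(t)\,dt$ against $\|h\|_{p,v,(0,\infty)}$; \cite[Theorem~4.4]{gop}, in its general $T_{u,b}$-form, then produces the two pieces $D$ and $E$ with $b(t)^{p'}v(t)^{1-p'}$ in place of $v(t)^{1-p'}$ and with $\sup_{t\le\tau}(u(\tau)/B(\tau))^{p'}\int_0^\tau b^{p'}v^{1-p'}$ in place of $\sup_{t\le\tau}u(\tau)^{p'}\int_0^\tau v^{1-p'}$. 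One then integrates by parts in the $t$-variable through Corollary~\ref{cor.IBP} (on $D$, with $f(t)=\int_0^t b^{p'}v^{1-p'}$) and Corollary~\ref{cor.IBP.0} (on $E$, with $f(t)=\sup_{t\le\tau}(u(\tau)/B(\tau))^{p'}\int_0^\tau b^{p'}v^{1-p'}$), exactly as in the proof of Theorem~\ref{aux.thm.1}, splitting $E\approx E_1+E_2+E_3$ by Fubini. Finally one applies \cite[Theorem~1.1]{Oinar} (for $p\le q$) or \cite[Theorem~1.2]{Oinar} (for $q<p$) to the Volterra operator coming from $D$, Mazya's weighted Hardy estimates \cite[Theorem~1, p.~40 and Theorem~3, p.~44]{mazya} (for $p\le q$) or \cite[Theorem~2, p.~48]{mazya} (for $q<p$) to the two Stieltjes operators $E_1,E_2$, and plain duality to $E_3$, precisely as in \eqref{eq.II} and \eqref{eq.II.0}. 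No new inequality is needed.

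The step that requires genuine care is the transfer of the standing assumptions, and this is the only real obstacle. One must check that $\tilde u=u/B$ is admissible for Theorem~\ref{aux.thm.1}: continuity is immediate since $u\in C\I$ and $B$ is continuous and strictly positive on $\I$ (as $b>0$ a.e.), and positivity of $\int_0^x u/B$ is clear; and one must verify the non-degeneracy condition $0<\int_0^x b(t)^{p'}v(t)^{1-p'}\,dt<\infty$ for all $x>0$ (equivalently $\tilde v\in\W\I$ together with $0<\int_0^x\tilde v^{1-p'}<\infty$), which is exactly what makes the limit terms $f(0+)=0$ and allows the integrations by parts in Corollaries~\ref{cor.IBP}--\ref{cor.IBP.0} and the Oinarov/Mazya theorems to be invoked (when it fails, both sides of the claimed equivalence are infinite by the usual conventions). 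One must also confirm that the Stieltjes integrals against $d\big(-\sup_{t\le\tau}(u(\tau)/B(\tau))^{p'}\int_0^\tau b^{p'}v^{1-p'}\big)$ and the limit term $\lim_{t\to\infty}\big(\sup_{t\le\tau}\frac{u(\tau)}{B(\tau)}\big(\int_0^\tau b^{p'}v^{1-p'}\big)^{1/p'}\big)$ are the correct images of their unweighted counterparts. Once this bookkeeping is done, the proof closes exactly as that of Theorem~\ref{aux.thm.1}; the difficulty is organizational, not analytical.
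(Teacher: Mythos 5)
Your first paragraph is exactly the paper's proof: the authors reduce Theorem~\ref{aux.thm.1.1} to Theorem~\ref{aux.thm.1} by the same substitution $h\mapsto hb$ (equivalently replacing $u$ by $u/B$ and $v$ by $b^{-p}v$, so that $\tilde v^{\,1-p'}=b^{p'}v^{1-p'}$), and your alternative ``repeat the proof with $b$ carried along'' is just the same computation unrolled. Your remark about the transferred non-degeneracy condition $0<\int_0^x b^{p'}v^{1-p'}<\infty$ is a fair observation (the paper silently keeps the hypothesis in terms of $v^{1-p'}$), but it does not change the argument, which is correct and identical in approach to the paper's.
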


\begin{proof}
The statement follows by Theorem \ref{aux.thm.1} at once if we note that
\begin{align*}
\sup_{h \ge  0} \frac{\bigg( \int_0^{\infty} \bigg( \int_0^x \bigg( \sup_{t \le \tau} \frac{u(\tau)}{B(\tau)} \int_0^{\tau} h(y)b(y)\,dy \bigg) a(t)\,dt \bigg)^q w(x)\,dx \bigg)^{\frac{1}{q}}}{\bigg( \int_0^{\infty} h(s)^pv(s)\,ds\bigg)^{\frac{1}{p}}} & \\
& \hspace{-7cm} = \sup_{h \ge  0} \frac{\bigg( \int_0^{\infty} \bigg( \int_0^x \bigg( \sup_{t \le \tau} \frac{u(\tau)}{B(\tau)} \int_0^{\tau} h(y)\,dy \bigg) a(t)\,dt \bigg)^q w(x)\,dx \bigg)^{\frac{1}{q}}}{\bigg( \int_0^{\infty} h(s)^p b(s)^{-p}v(s)\,ds\bigg)^{\frac{1}{p}}}.
\end{align*}

\end{proof}

	\begin{thm}\label{aux.thm.2}
		Let $1 < p,\, q < \infty$ and $b \in \W\I$ be such that $b(t) > 0$ for a.e. $t\in (0,\infty)$. Assume that $u \in \W\I \cap C\I$ and $a,\,v,\,w \in \W\I$. Moreover, assume that
		$$
		0 < \int_x^{\infty} v(t)^{1-p'}\,dt < \infty \qquad \mbox{for all} \quad x > 0.
		$$ 
		Denote by
		\begin{align*}
		\psi (x) & : = \bigg( \int_x^{\infty} b(t)^{p'}	v^{1-{p}^{\prime}}(t)\,dt\bigg)^{- \frac{p^{\prime}}{p^{\prime} + 1}} b(x)^{p'} v^{1-{p}^{\prime}}(x) \\
		\intertext{and}
		\Psi(x) & : = \bigg( \int_x^{\infty} b(t)^{p'} v^{1-{p}^{\prime}}(t)\,dt\bigg)^{\frac{1}{p^{\prime} + 1}}.
		\end{align*}
		
		{\rm (i)} If $p \le q$, then
        \begin{align*}
        \sup_{h \ge  0} \frac{\bigg( \int_0^{\infty} \bigg( \int_0^x \bigg( \sup_{t \le \tau}\frac{u(\tau)}{B(\tau)} \int_{\tau}^{\infty} h(y)b(y)\,dy \bigg) a(t)\,dt \bigg)^q w(x)\,dx \bigg)^{\frac{1}{q}}}{\bigg( \int_0^{\infty} h(s)^pv(s)\,ds\bigg)^{\frac{1}{p}}} & \\
        & \hspace{-7cm} \approx \, \sup_{t \in (0,\infty)} \bigg( \int_0^t \Psi(x)^{-p'} \psi(x) \, \bigg( \int_x^t \bigg( \sup_{s \le \tau} \frac{u(\tau)}{B(\tau)} \Psi(\tau)^2 \bigg) a(s)\,ds \bigg)^{p'} \, dx \bigg)^{\frac{1}{p'}} \, \bigg( \int_t^{\infty} w(y) \,dy \bigg)^{\frac{1}{q}} \notag \\
        & \hspace{-6.5cm} + \, \sup_{t \in (0,\infty)} \bigg( \int_0^t \Psi(x)^{-p'} \psi(x)  \, dx \bigg)^{\frac{1}{p'}} \, \bigg( \int_t^{\infty} \, \bigg( \int_t^y \bigg( \sup_{s \le \tau} \frac{u(\tau)}{B(\tau)} \Psi(\tau)^2\bigg) a(s)\,ds \bigg)^{q} w(y) \,dy \bigg)^{\frac{1}{q}} \\
        & \hspace{-6.5cm} + \, \sup_{x \in (0,\infty)} \bigg( \int_{[x,\infty)} \, d \, \bigg( - \sup_{t \le \tau} \bigg(\frac{u(\tau)}{B(\tau)}\bigg)^{p'} \Psi(\tau)^{2p'} \bigg( \int_0^{\tau} \Psi(s)^{-p'} \psi(s)\,ds \bigg) \bigg) \bigg)^{\frac{1}{p'}} \, \bigg( \int_0^x A(y)^q w(y) \,dy \bigg)^{\frac{1}{q}} \notag \\
        & \hspace{-6.5cm} + \,\sup_{x \in (0,\infty)} \bigg( \int_{(0,x]} \,  A(t)^{p'} \, d \, \bigg( - \sup_{t \le \tau} \bigg(\frac{u(\tau)}{B(\tau)}\bigg)^{p'} \Psi(\tau)^{2p'} \bigg( \int_0^{\tau} \Psi(s)^{-p'} \psi(s)\,ds \bigg) \bigg) \bigg)^{\frac{1}{p'}} \, \bigg( \int_x^{\infty} w(y) \,dy \bigg)^{\frac{1}{q}} \notag \\
        & \hspace{-6.5cm} + \,\bigg( \int_0^{\infty} A(y)^q w(y) \,dy \bigg)^{\frac{1}{q}} \, \lim_{t \rightarrow \infty} \bigg(\sup_{t \le \tau} \frac{u(\tau)}{B(\tau)} \Psi(\tau)^2 \bigg( \int_0^{\tau} \Psi(s)^{-p'} \psi(s)\,ds \bigg)^{\frac{1}{p'}}\bigg) \\
        & \hspace{-6.5cm} + \, \bigg( \int_0^{\infty} \psi(s)\,ds\bigg)^{-\frac{1}{p}} \bigg( \int_0^{\infty} \bigg( \int_0^x \bigg( \sup_{t \le \tau} \frac{u(\tau)}{B(\tau)} \Psi(\tau)^2 \bigg) a(t)\,dt \bigg)^q w(x)\,dx \bigg)^{\frac{1}{q}};
        \end{align*}
		
		{\rm (ii)} If $q < p$, then
		\begin{align*}
		\sup_{h \ge  0} \frac{\bigg( \int_0^{\infty} \bigg( \int_0^x \bigg( \sup_{t \le \tau} \frac{u(\tau)}{B(\tau)} \int_{\tau}^{\infty} h(y)b(y)\,dy \bigg) a(t)\,dt \bigg)^q w(x)\,dx \bigg)^{\frac{1}{q}}}{\bigg( \int_0^{\infty} h(s)^pv(s)\,ds\bigg)^{\frac{1}{p}}} & \\
		& \hspace{-8cm} \approx \, \bigg( \int_0^{\infty}  \bigg( \int_0^t \Psi(x)^{-p'} \psi(x) \,dx \bigg)^{\frac{r}{q'}} \Psi(t)^{-p'} \psi(t) \, \bigg( \int_t^{\infty} \bigg( \int_t^z \bigg( \sup_{s \le y} \frac{u(y)}{B(y)}\Psi(y)^2\bigg) a(s)\,ds \bigg)^{q} w(z)\, dz \bigg)^{\frac{r}{q}} \, dt \bigg)^{\frac{1}{r}} \notag \\
		& \hspace{-7.5cm} + \bigg( \int_0^{\infty} \bigg( \int_0^t \Psi(x)^{-p'} \psi(x) \bigg( \int_x^t \bigg( \sup_{s \le y}\frac{u(y)}{B(y)}\Psi(y)^2\bigg) a(s)\,ds \bigg)^{p'} \, dx \bigg)^{\frac{r}{p'}} \bigg( \int_z^{\infty} w(s) \,ds \bigg)^{\frac{r}{p}} w(t)\,dt \bigg)^{\frac{1}{r}} \\
		& \hspace{-7.5cm} + \, \bigg( \int_0^{\infty} \bigg( \int_{[x,\infty)} \, d \, \bigg( - \bigg(\sup_{t \le \tau} \bigg(\frac{u(\tau)}{B(\tau)}\bigg)^{p'} \Psi(\tau)^{2p'} \, \bigg( \int_0^{\tau} \Psi(s)^{-p'} \psi(s)\,ds \bigg) \bigg) \bigg) \bigg)^{\frac{r}{p'}} \, \bigg( \int_0^x A(y)^q w(y) \,dy \bigg)^{\frac{r}{p}} A(x)^q w(x) \,dx\bigg)^{\frac{1}{r}} \notag \\
		& \hspace{-7.5cm} + \bigg( \int_0^{\infty} \bigg( \int_{(0,x]} \,  A(t)^{p'} \, d \, \bigg( - \bigg(\sup_{t \le \tau} \bigg(\frac{u(\tau)}{B(\tau)}\bigg)^{p'} \Psi(\tau)^{2p'} \, \bigg( \int_0^{\tau} \Psi(s)^{-p'} \psi(s)\,ds \bigg) \bigg) \bigg)  \bigg)^{\frac{r}{p'}} \, \bigg( \int_x^{\infty} w(y) \,dy \bigg)^{\frac{r}{p}} w(x)\,dx \bigg)^{\frac{1}{r}} \notag \\
		& \hspace{-7.5cm} + \bigg( \int_0^{\infty} A(y)^q w(y) \,dy \bigg)^{\frac{1}{q}} \, \lim_{t \rightarrow \infty} \bigg(\sup_{t \le \tau} \frac{u(\tau)}{B(\tau)} \Psi(\tau)^2 \,\bigg( \int_0^{\tau} \Psi(s)^{-p'} \psi(s)\,ds \bigg)^{\frac{1}{p'}}\bigg) \\
		& \hspace{-7.5cm} + \, \bigg( \int_0^{\infty} \psi(s)\,ds\bigg)^{-\frac{1}{p}} \bigg( \int_0^{\infty} \bigg( \int_0^x \bigg( \sup_{t \le \tau}\frac{u(\tau)}{B(\tau)} \Psi(\tau)^2 \bigg) a(t)\,dt \bigg)^q w(x)\,dx \bigg)^{\frac{1}{q}}.
		\end{align*}       
		
	\end{thm}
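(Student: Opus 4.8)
The plan is to run the argument of the proof of Theorem~\ref{aux.thm.1}, with the inner Hardy-type supremal operator replaced by its Copson-type counterpart. First I would dualize the outer $q$-norm: since $w\in\W\I$ and $1<q<\infty$, for a fixed $h\ge 0$
\[
\Big\|\int_0^{\cdot}(T_{u,b}^{*}h)(t)\,a(t)\,dt\Big\|_{q,w,\I}=\sup_{g\ge 0}\frac{\int_0^{\infty}\big(\int_0^{x}(T_{u,b}^{*}h)(t)a(t)\,dt\big)g(x)\,dx}{\big(\int_0^{\infty}g^{q'}w^{1-q'}\big)^{1/q'}},
\]
and, by Fubini's theorem, the numerator equals $\int_0^{\infty}(T_{u,b}^{*}h)(t)\big(\int_t^{\infty}g\big)a(t)\,dt$. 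Interchanging the suprema over $h$ and over $g$, exactly as in the proof of Theorem~\ref{aux.thm.1}, reduces the quantity to be computed to $\sup_{g\ge0}\big(\int_0^{\infty}g^{q'}w^{1-q'}\big)^{-1/q'}\mathcal N(g)$, where
\[
\mathcal N(g):=\sup_{h\ge0}\frac{1}{\|h\|_{p,v,\I}}\int_0^{\infty}\Big(\sup_{t\le\tau}\tfrac{u(\tau)}{B(\tau)}\int_{\tau}^{\infty}h(y)b(y)\,dy\Big)\Big(\int_t^{\infty}g\Big)a(t)\,dt .
\]

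The next step is to insert the known description of $\mathcal N(g)$. The estimate $\int_0^{\infty}\big(\sup_{t\le\tau}\tfrac{u(\tau)}{B(\tau)}\int_{\tau}^{\infty}hb\big)\varphi(t)\,dt\lesssim\|h\|_{p,v}$, here with $\varphi(t)=\big(\int_t^{\infty}g\big)a(t)$, is the Copson-type supremal inequality, i.e.\ the counterpart for $T_{u,b}^{*}$ of the result \cite[Theorem~4.4]{gop} used in Theorem~\ref{aux.thm.1}; it is likewise available in \cite{gop} (see also \cite{GogMusISI}), and this is the point at which the auxiliary functions $\Psi$ and $\Psi^{-p'}\psi$ enter, $\Psi$ being, up to a multiplicative constant, the $(p'+1)$-st root of $\int_{\cdot}^{\infty}b^{p'}v^{1-p'}$, with $\int_x^{\infty}\psi=(p'+1)\Psi(x)$. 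The outcome is that $\mathcal N(g)$ is equivalent to the sum of a $D$-type term carrying the kernel $\int_x^{t}\big(\sup_{s\le\tau}\tfrac{u(\tau)}{B(\tau)}\Psi(\tau)^{2}\big)a(s)\,ds$, an $E$-type term carrying the Stieltjes measure $d\big(-\sup_{t\le\tau}(\tfrac{u(\tau)}{B(\tau)})^{p'}\Psi(\tau)^{2p'}\int_0^{\tau}\Psi^{-p'}\psi\big)$, and a third term equal to $(\int_0^{\infty}\psi)^{-1/p}\int_0^{\infty}\big(\sup_{s\le\tau}\tfrac{u(\tau)}{B(\tau)}\Psi(\tau)^{2}\big)a(s)\big(\int_s^{\infty}g\big)\,ds$; this last summand is the genuinely new feature relative to Theorem~\ref{aux.thm.1}.

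After that, I would treat each summand exactly as in the proof of Theorem~\ref{aux.thm.1}: integrate by parts, using Corollary~\ref{cor.IBP} or Corollary~\ref{cor.IBP.0} (with $g$ there replaced by $\int_{\cdot}^{\infty}g$, or by $g\,A$, as the case requires) together with Fubini's theorem, so as to rewrite the $D$-type term as a weighted $L^{p'}$-norm of an Oinarov-kernel Volterra operator applied to $g$, and to split the $E$-type term into the three standard pieces $E_1,E_2,E_3$. Finally I would take $\sup_{g\ge0}$ against $\big(\int_0^{\infty}g^{q'}w^{1-q'}\big)^{1/q'}$ of each piece separately: for the Oinarov-kernel term I would invoke \cite[Theorem~1.1]{Oinar} when $p\le q$ and \cite[Theorem~1.2]{Oinar} when $q<p$, obtaining the first two summands of the statement; for the Hardy-type pieces $E_1$ and $E_2$ I would invoke \cite[Theorem~1, p.~40 and Theorem~3, p.~44]{mazya} when $p\le q$ and \cite[Theorem~2, p.~48]{mazya} when $q<p$, obtaining the next two summands; for $E_3$ a one-line duality computation gives $(\int_0^{\infty}A^{q}w)^{1/q}$ times the corresponding limit; and for the new term, by \eqref{Fubini.2} and duality, $\sup_{g\ge0}(\int_0^{\infty}g^{q'}w^{1-q'})^{-1/q'}\int_0^{\infty}(\sup_{s\le\tau}\tfrac{u(\tau)}{B(\tau)}\Psi(\tau)^{2})a(s)(\int_s^{\infty}g)\,ds$ is exactly $\big(\int_0^{\infty}(\int_0^{x}(\sup_{t\le\tau}\tfrac{u(\tau)}{B(\tau)}\Psi(\tau)^{2})a(t)\,dt)^{q}w(x)\,dx\big)^{1/q}$. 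Collecting the resulting six expressions yields the stated equivalence in (i); part (ii) is identical except that the $q<p$ versions of the cited Oinarov and Mazya theorems are used.

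The step I expect to be the main obstacle is the insertion and bookkeeping of the Copson-type inner characterization: one has to record the correct form of the $L^p(v)$-estimate for $h\mapsto\sup_{t\le\tau}\tfrac{u(\tau)}{B(\tau)}\int_{\tau}^{\infty}hb$, check that the substitution producing $\Psi$ and $\psi$ is legitimate under the hypothesis $0<\int_x^{\infty}v^{1-p'}<\infty$, and correctly extract the extra term with coefficient $(\int_0^{\infty}\psi)^{-1/p}$ --- a term which, with the conventions $1/\infty=0$ and $0\cdot\infty=0$, simply drops out when $\int_0^{\infty}b^{p'}v^{1-p'}=\infty$. The remaining work --- the integrations by parts via Corollaries~\ref{cor.IBP} and~\ref{cor.IBP.0}, the Fubini interchanges, and the applications of the Oinarov and Mazya theorems --- is routine and runs in complete parallel with the proof of Theorem~\ref{aux.thm.1}.
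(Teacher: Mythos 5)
Your argument reaches the right answer, but it is organized quite differently from the paper's proof, and its one load-bearing citation is shaky. The paper does not redo any of the duality/integration-by-parts/Oinarov--Maz'ya work for $T_{u,b}^*$: it applies the transfer result \cite[Corollary 3.5]{GogMusIHI} directly to the full Ces\`{a}ro-type functional, which converts the Copson-inside supremal operator into the Hardy-inside one with $u$ replaced by $\tfrac{u}{B}\Psi^2$ and $v$ replaced by $\Psi^p\psi^{1-p}$ (note $(\Psi^p\psi^{1-p})^{1-p'}=\Psi^{-p'}\psi$), plus the extra summand with coefficient $\big(\int_0^\infty\psi\big)^{-1/p}$; at that point Theorem \ref{aux.thm.1} is quoted verbatim and the proof is finished. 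You instead re-run the whole proof of Theorem \ref{aux.thm.1} (dualize in $g$, interchange suprema, characterize $\mathcal N(g)$, integrate by parts via Corollaries \ref{cor.IBP.0} and \ref{cor.IBP}, then apply \cite{Oinar} and \cite{mazya}); this does lead to the same six summands, and your treatment of the new term by Fubini and $L^{q'}(w^{1-q'})$-duality is exactly right. The weak point is your claim that the Copson-inside counterpart of \cite[Theorem 4.4]{gop} -- the characterization of $\sup_h \int_0^\infty (T^*_{u,b}h)(t)\big(\int_t^\infty g\big)a(t)\,dt\big/\|h\|_{p,v,\I}$ with the stated $D$-type, $E$-type and extra terms -- is ``likewise available in \cite{gop}'': that paper treats only $T_{u,b}$, so as written this step is unsupported. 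It can be repaired either by deriving that inner characterization from \cite[Corollary 3.5]{GogMusIHI} combined with \cite[Theorem 4.4]{gop} (i.e.\ performing the $\Psi,\psi$-transfer at the inner $L^1$ level), or, more economically, by doing what the paper does and applying the transfer once, at the outer level, so that Theorem \ref{aux.thm.1} can be invoked wholesale and none of the Oinarov/Maz'ya bookkeeping needs to be repeated.
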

	
	\begin{proof}
	By \cite[Corollary 3.5]{GogMusIHI}, we have that
	\begin{align*}
	\sup_{h \ge  0} \frac{\bigg( \int_0^{\infty} \bigg( \int_0^x \bigg( \sup_{t \le \tau}\frac{u(\tau)}{B(\tau)} \int_{\tau}^{\infty} h(y)b(y)\,dy \bigg) a(t)\,dt \bigg)^q w(x)\,dx \bigg)^{\frac{1}{q}}}{\bigg( \int_0^{\infty} h(s)^pv(s)\,ds\bigg)^{\frac{1}{p}}} & \\
	& \hspace{-7cm} = \, \sup_{h \ge  0} \frac{\bigg( \int_0^{\infty} \bigg( \int_0^x \bigg( \sup_{t \le \tau}\frac{u(\tau)}{B(\tau)} \int_{\tau}^{\infty} h(y)\,dy \bigg) a(t)\,dt \bigg)^q w(x)\,dx \bigg)^{\frac{1}{q}}}{\bigg( \int_0^{\infty} h(s)^p b(s)^{-p}v(s)\,ds\bigg)^{\frac{1}{p}}} \\
	& \hspace{-7cm} \approx \, \sup_{h \ge  0} \frac{\bigg( \int_0^{\infty} \bigg( \int_0^x \bigg( \sup_{t \le \tau} \frac{u(\tau)}{B(\tau)}\Psi(\tau)^2 \int_0^{\tau} h(y)\,dy \bigg) a(t)\,dt \bigg)^q w(x)\,dx \bigg)^{\frac{1}{q}}}{\bigg( \int_0^{\infty} h(s)^p \Psi(s)^p \psi(s)^{1-p}\,ds\bigg)^{\frac{1}{p}}} \\
	& \hspace{-6.5cm} + \, \frac{\bigg( \int_0^{\infty} \bigg( \int_0^x \bigg( \sup_{t \le \tau} \frac{u(\tau)}{B(\tau)} \Psi(\tau)^2 \bigg) a(t)\,dt \bigg)^q w(x)\,dx \bigg)^{\frac{1}{q}}}{\bigg( \int_0^{\infty} \psi(s)\,ds\bigg)^{\frac{1}{p}}}.
	\end{align*}
    
    {\rm (i)} Let $p \le q$. By Theorem \ref{aux.thm.1}, (i),  we get that	
    \begin{align*}
    \sup_{h \ge  0} \frac{\bigg( \int_0^{\infty} \bigg( \int_0^x \bigg( \sup_{t \le \tau}\frac{u(\tau)}{B(\tau)} \int_{\tau}^{\infty} h(y)b(y)\,dy \bigg) a(t)\,dt \bigg)^q w(x)\,dx \bigg)^{\frac{1}{q}}}{\bigg( \int_0^{\infty} h(s)^pv(s)\,ds\bigg)^{\frac{1}{p}}} & \\
    & \hspace{-7cm} \approx \, \sup_{t \in (0,\infty)} \bigg( \int_0^t \Psi(x)^{-p'} \psi(x) \, \bigg( \int_x^t \bigg( \sup_{s \le \tau} \frac{u(\tau)}{B(\tau)} \Psi(\tau)^2 \bigg) a(s)\,ds \bigg)^{p'} \, dx \bigg)^{\frac{1}{p'}} \, \bigg( \int_t^{\infty} w(y) \,dy \bigg)^{\frac{1}{q}} \notag \\
    & \hspace{-6.5cm} + \, \sup_{t \in (0,\infty)} \bigg( \int_0^t \Psi(x)^{-p'} \psi(x)  \, dx \bigg)^{\frac{1}{p'}} \, \bigg( \int_t^{\infty} \, \bigg( \int_t^y \bigg( \sup_{s \le \tau} \frac{u(\tau)}{B(\tau)} \Psi(\tau)^2\bigg) a(s)\,ds \bigg)^{q} w(y) \,dy \bigg)^{\frac{1}{q}} \\
    & \hspace{-6.5cm} + \, \sup_{x \in (0,\infty)} \bigg( \int_{[x,\infty)} \, d \, \bigg( - \sup_{t \le \tau} \bigg(\frac{u(\tau)}{B(\tau)}\bigg)^{p'} \Psi(\tau)^{2p'} \bigg( \int_0^{\tau} \Psi(s)^{-p'} \psi(s)\,ds \bigg) \bigg) \bigg)^{\frac{1}{p'}} \, \bigg( \int_0^x A(y)^q w(y) \,dy \bigg)^{\frac{1}{q}} \notag \\
    & \hspace{-6.5cm} + \,\sup_{x \in (0,\infty)} \bigg( \int_{(0,x]} \,  A(t)^{p'} \, d \, \bigg( - \sup_{t \le \tau} \bigg(\frac{u(\tau)}{B(\tau)}\bigg)^{p'} \Psi(\tau)^{2p'} \bigg( \int_0^{\tau} \Psi(s)^{-p'} \psi(s)\,ds \bigg) \bigg) \bigg)^{\frac{1}{p'}} \, \bigg( \int_x^{\infty} w(y) \,dy \bigg)^{\frac{1}{q}} \notag \\
    & \hspace{-6.5cm} + \,\bigg( \int_0^{\infty} A(y)^q w(y) \,dy \bigg)^{\frac{1}{q}} \, \lim_{t \rightarrow \infty} \bigg(\sup_{t \le \tau} \frac{u(\tau)}{B(\tau)} \Psi(\tau)^2 \bigg( \int_0^{\tau} \Psi(s)^{-p'} \psi(s)\,ds \bigg)^{\frac{1}{p'}}\bigg) \\
    & \hspace{-6.5cm} + \, \bigg( \int_0^{\infty} \psi(s)\,ds\bigg)^{-\frac{1}{p}} \bigg( \int_0^{\infty} \bigg( \int_0^x \bigg( \sup_{t \le \tau} \frac{u(\tau)}{B(\tau)} \Psi(\tau)^2 \bigg) a(t)\,dt \bigg)^q w(x)\,dx \bigg)^{\frac{1}{q}};
    \end{align*}
    
    {\rm (ii)} Let $q < p$. By Theorem \ref{aux.thm.1}, (ii),  we obtain that	
    \begin{align*}
    \sup_{h \ge  0} \frac{\bigg( \int_0^{\infty} \bigg( \int_0^x \bigg( \sup_{t \le \tau} \frac{u(\tau)}{B(\tau)} \int_{\tau}^{\infty} h(y)b(y)\,dy \bigg) a(t)\,dt \bigg)^q w(x)\,dx \bigg)^{\frac{1}{q}}}{\bigg( \int_0^{\infty} h(s)^pv(s)\,ds\bigg)^{\frac{1}{p}}} & \\
    & \hspace{-8cm} \approx \, \bigg( \int_0^{\infty}  \bigg( \int_0^t \Psi(x)^{-p'} \psi(x) \,dx \bigg)^{\frac{r}{q'}} \Psi(t)^{-p'} \psi(t) \, \bigg( \int_t^{\infty} \bigg( \int_t^z \bigg( \sup_{s \le y} \frac{u(y)}{B(y)}\Psi(y)^2\bigg) a(s)\,ds \bigg)^{q} w(z)\, dz \bigg)^{\frac{r}{q}} \, dt \bigg)^{\frac{1}{r}} \notag \\
    & \hspace{-7.5cm} + \bigg( \int_0^{\infty} \bigg( \int_0^t \Psi(x)^{-p'} \psi(x) \bigg( \int_x^t \bigg( \sup_{s \le y}\frac{u(y)}{B(y)}\Psi(y)^2\bigg) a(s)\,ds \bigg)^{p'} \, dx \bigg)^{\frac{r}{p'}} \bigg( \int_z^{\infty} w(s) \,ds \bigg)^{\frac{r}{p}} w(t)\,dt \bigg)^{\frac{1}{r}} \\
    & \hspace{-7.5cm} + \, \bigg( \int_0^{\infty} \bigg( \int_{[x,\infty)} \, d \, \bigg( - \bigg(\sup_{t \le \tau} \bigg(\frac{u(\tau)}{B(\tau)}\bigg)^{p'} \Psi(\tau)^{2p'} \, \bigg( \int_0^{\tau} \Psi(s)^{-p'} \psi(s)\,ds \bigg) \bigg) \bigg) \bigg)^{\frac{r}{p'}}  \, \bigg( \int_0^x A(y)^q w(y) \,dy \bigg)^{\frac{r}{p}} A(x)^q w(x) \,dx\bigg)^{\frac{1}{r}} \notag \\
    & \hspace{-7.5cm} + \bigg( \int_0^{\infty} \bigg( \int_{(0,x]} \,  A(t)^{p'} \, d \, \bigg( - \bigg(\sup_{t \le \tau} \bigg(\frac{u(\tau)}{B(\tau)}\bigg)^{p'} \Psi(\tau)^{2p'} \, \bigg( \int_0^{\tau} \Psi(s)^{-p'} \psi(s)\,ds \bigg) \bigg) \bigg)  \bigg)^{\frac{r}{p'}} \, \bigg( \int_x^{\infty} w(y) \,dy \bigg)^{\frac{r}{p}} w(x)\,dx \bigg)^{\frac{1}{r}} \notag \\
    & \hspace{-7.5cm} + \bigg( \int_0^{\infty} A(y)^q w(y) \,dy \bigg)^{\frac{1}{q}} \, \lim_{t \rightarrow \infty} \bigg(\sup_{t \le \tau} \frac{u(\tau)}{B(\tau)} \Psi(\tau)^2 \,\bigg( \int_0^{\tau} \Psi(s)^{-p'} \psi(s)\,ds \bigg)^{\frac{1}{p'}}\bigg) \\
    & \hspace{-7.5cm} + \, \bigg( \int_0^{\infty} \psi(s)\,ds\bigg)^{-\frac{1}{p}} \bigg( \int_0^{\infty} \bigg( \int_0^x \bigg( \sup_{t \le \tau}\frac{u(\tau)}{B(\tau)} \Psi(\tau)^2 \bigg) a(t)\,dt \bigg)^q w(x)\,dx \bigg)^{\frac{1}{q}}.
    \end{align*}       
    
    The proof is completed.
	\end{proof}

\
	
\section{The boundedness of $R_u$ from $L^p(v)$ into $\ces_q(w,a)$ on the cone of monotone non-increasing functions}\label{R}	

\

In this section we characterize the boundedness of $R_u$ from $L^p(v)$ into $\ces_q(w,a)$ on the cone of monotone non-increasing functions.
\begin{thm}\label{thm.R}
	Let $1 < p,\, q < \infty$. Assume that $u \in \W\I \cap C\I$ and $a,\,v,\,w \in \W\I$.
	
	{\rm (i)} If $p \le q$, then
	\begin{align*}
	\sup_{f \in \mp^{+,\dn} (0,\infty)} \frac{\bigg( \int_0^{\infty} \bigg( \int_0^x (R_u f) (t) a(t)\,dt \bigg)^q w(x)\,dx \bigg)^{\frac{1}{q}}}{\bigg( \int_0^{\infty} f(s)^pv(s)\,ds\bigg)^{\frac{1}{p}}} & \\
	& \hspace{-6cm} \approx \, \sup_{t \in (0,\infty)} \bigg( \int_0^t V(x)^{p'} v(x) \, \bigg( \int_x^t \bigg( \sup_{s \le \tau}u(\tau) V(\tau)^{-2}\bigg) a(s)\,ds \bigg)^{p'} \, dx \bigg)^{\frac{1}{p'}} \, \bigg( \int_t^{\infty} w(y) \,dy \bigg)^{\frac{1}{q}} \notag \\
	& \hspace{-5.5cm} + \sup_{t \in (0,\infty)} \bigg( \int_0^t V(x)^{p'} v(x)  \, dx \bigg)^{\frac{1}{p'}} \, \bigg( \int_t^{\infty} \, \bigg( \int_t^y \bigg( \sup_{s \le \tau}u(\tau) V(\tau)^{-2}\bigg) a(s)\,ds \bigg)^{q} w(y) \,dy \bigg)^{\frac{1}{q}} \\
	& \hspace{-5.5cm} + \, \sup_{x \in (0,\infty)} \bigg( \int_{[x,\infty)} \, d \, \bigg( - \sup_{t \le \tau} u(\tau)^{p'} V(\tau)^{-2p'} \bigg( \int_0^{\tau} V(s)^{p'} v(s)\,ds \bigg) \bigg) \bigg)^{\frac{1}{p'}} \, \bigg( \int_0^x A(y)^q w(y) \,dy \bigg)^{\frac{1}{q}} \notag \\
	& \hspace{-5.5cm} + \sup_{x \in (0,\infty)} \bigg( \int_{(0,x]} \,  A(t)^{p'} \, d \, \bigg( - \sup_{t \le \tau} u(\tau)^{p'} V(\tau)^{-2p'} \bigg( \int_0^{\tau} V(s)^{p'} v(s)\,ds \bigg) \bigg) \bigg)^{\frac{1}{p'}} \, \bigg( \int_x^{\infty} w(y) \,dy \bigg)^{\frac{1}{q}} \notag \\
	& \hspace{-5.5cm} + \bigg( \int_0^{\infty} A(y)^q w(y) \,dy \bigg)^{\frac{1}{q}} \, \lim_{t \rightarrow \infty} \bigg(\sup_{t \le \tau} u(\tau) V(\tau)^{-2} \bigg( \int_0^{\tau} V(s)^{p'} v(s)\,ds \bigg)^{\frac{1}{p'}}\bigg);
	\end{align*}
	
	{\rm (ii)} If $q < p$, then
	\begin{align*}
	\sup_{f \in \mp^{+,\dn} (0,\infty)} \frac{\bigg( \int_0^{\infty} \bigg( \int_0^x (R_u f) (t) a(t)\,dt \bigg)^q w(x)\,dx \bigg)^{\frac{1}{q}}}{\bigg( \int_0^{\infty} f(s)^p v(s)\,ds\bigg)^{\frac{1}{p}}} & \\
	& \hspace{-6cm} \approx \, \bigg( \int_0^{\infty}  \bigg( \int_0^t V(x)^{p'} v(x) \,dx \bigg)^{\frac{r}{q'}} V(t)^{p'} v(t) \, \bigg( \int_t^{\infty} \bigg( \int_t^z \bigg( \sup_{s \le y}u(y)V(y)^{-2}\bigg) a(s)\,ds \bigg)^{q} w(z)\, dz \bigg)^{\frac{r}{q}} \, dt \bigg)^{\frac{1}{r}} \notag \\
	& \hspace{-5.5cm} + \bigg( \int_0^{\infty} \bigg( \int_0^t V(x)^{p'} v(x) \bigg( \int_x^t \bigg( \sup_{s \le y}u(y)V(y)^{-2}\bigg) a(s)\,ds \bigg)^{p'} \, dx \bigg)^{\frac{r}{p'}} \bigg( \int_z^{\infty} w(s) \,ds \bigg)^{\frac{r}{p}} w(t)\,dt \bigg)^{\frac{1}{r}} \\
	& \hspace{-5.5cm} + \, \bigg( \int_0^{\infty} \bigg( \int_{[x,\infty)} \, d \, \bigg( - \bigg(\sup_{t \le \tau} u(\tau)^{p'}V(\tau)^{-2p'} \, \bigg( \int_0^{\tau} V(s)^{p'} v(s)\,ds \bigg) \bigg) \bigg) \bigg)^{\frac{r}{p'}} \, \bigg( \int_0^x A(y)^q w(y) \,dy \bigg)^{\frac{r}{p}} x^q w(x) \,dx\bigg)^{\frac{1}{r}} \notag \\
	& \hspace{-5.5cm} + \bigg( \int_0^{\infty} \bigg( \int_{(0,x]} \,  A(t)^{p'} \, d \, \bigg( - \bigg(\sup_{t \le \tau} u(\tau)^{p'}V(\tau)^{-2p'} \, \bigg( \int_0^{\tau} V(s)^{p'} v(s)\,ds \bigg) \bigg) \bigg)  \bigg)^{\frac{r}{p'}} \, \bigg( \int_x^{\infty} w(y) \,dy \bigg)^{\frac{r}{p}} w(x)\,dx \bigg)^{\frac{1}{r}} \notag \\
	& \hspace{-5.5cm} + \bigg( \int_0^{\infty} A(y)^q w(y) \,dy \bigg)^{\frac{1}{q}} \, \lim_{t \rightarrow \infty} \bigg(\sup_{t \le \tau} u(\tau)V(\tau)^{-2} \bigg( \int_0^{\tau} V(s)^{p'} v(s)\,ds \bigg)^{\frac{1}{p'}}\bigg). 		
		\end{align*}   
	
\end{thm}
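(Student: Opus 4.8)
The plan is to deduce Theorem~\ref{thm.R} from Theorem~\ref{aux.thm.1} by reducing the inequality for $R_u$ over the cone $\mp^{+,\dn}\I$ to an inequality of precisely the type treated in Section~\ref{main results}, but now over \emph{all} non-negative functions. The correct substitution of weights can be read off by comparing the right-hand sides: the role played by $u(\tau)$ in Theorem~\ref{aux.thm.1} is here played by $u(\tau)V(\tau)^{-2}$, and the role of the interior weight $v(x)^{1-p'}$ by $V(x)^{p'}v(x)$; since $-p(1-p')=p'$ and $(1-p)(1-p')=1$, this amounts to replacing $(u,v)$ by $\bigl(uV^{-2},\,V^{-p}v^{1-p}\bigr)$. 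With this choice the five terms of Theorem~\ref{aux.thm.1}(i) (resp.\ (ii)) turn verbatim into those of Theorem~\ref{thm.R}(i) (resp.\ (ii)); note in particular that the condition required by Theorem~\ref{aux.thm.1}, namely $0<\int_0^x\bigl(V^{-p}v^{1-p}\bigr)^{1-p'}\,dt=\int_0^x V(t)^{p'}v(t)\,dt=\tfrac{1}{p'+1}V(x)^{p'+1}<\infty$, then holds automatically.

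Thus the heart of the argument is the equivalence
\[
\sup_{0\ne f\in\mp^{+,\dn}\I}\ \frac{\|R_uf\|_{\ces_q(w,a)}}{\|f\|_{p,v,\I}}
\ \ap\
\sup_{h\ge 0}\ \frac{\bigl\|\,\sup_{t\le\tau}\bigl(u(\tau)V(\tau)^{-2}\int_0^\tau h\bigr)\,\bigr\|_{\ces_q(w,a)}}{\bigl(\int_0^\infty h(s)^pV(s)^{-p}v(s)^{1-p}\,ds\bigr)^{1/p}},
\]
the norm in the numerator on the right being that of the function $t\mapsto\sup_{t\le\tau}u(\tau)V(\tau)^{-2}\int_0^\tau h$. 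The estimate ``$\ls$'' is elementary: for $f$ non-increasing, integration by parts gives $V(\tau)^2f(\tau)\le\int_0^\tau 2V(s)v(s)f(s)\,ds$, so that $R_uf(t)\le\sup_{t\le\tau}u(\tau)V(\tau)^{-2}\int_0^\tau h$ with $h:=2Vvf\ge 0$, while a direct computation yields $\|h\|_{p,\,V^{-p}v^{1-p},\I}=2\|f\|_{p,v,\I}$. The estimate ``$\gs$'' is the substantial one and rests on Sawyer's duality principle for the cone of non-increasing functions: after dualizing the outer Ces\`{a}ro norm, for every fixed non-negative $\phi$ one has $\sup\bigl\{\int_0^\infty f\phi:\ 0\ne f\in\mp^{+,\dn}\I,\ \|f\|_{p,v,\I}\le 1\bigr\}\ap\bigl(\int_0^\infty(V(t)^{-1}\int_0^t\phi)^{p'}v(t)\,dt\bigr)^{1/p'}+V(\infty)^{-1/p}\int_0^\infty\phi$, and inserting this back and re-organizing the integrals reproduces, up to constants, the right-hand side above (cf.\ the reductions in \cite{gop} and \cite{GogMusISI}). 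The limit term $\lim_{t\to\infty}(\dots)$ in the conclusion is exactly the contribution of the summand $V(\infty)^{-1/p}\int_0^\infty\phi$, equivalently of non-increasing $f$ with $f(\infty)>0$; as in Corollaries~\ref{cor.IBP.0} and~\ref{cor.IBP} it is isolated by first treating the case $f(\infty)=0$ and then splitting off the constant.

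Granting this equivalence, one finishes by applying Theorem~\ref{aux.thm.1} with $(u,v,w,a)$ replaced by $\bigl(uV^{-2},\,V^{-p}v^{1-p},\,w,\,a\bigr)$ — a routine truncation argument reduces to the case in which the standing hypotheses of that theorem are literally met — and then simplifying the resulting expressions by means of $\bigl(V^{-p}v^{1-p}\bigr)^{1-p'}=V^{p'}v$ and $\int_0^\tau V(s)^{p'}v(s)\,ds=\tfrac{1}{p'+1}V(\tau)^{p'+1}$, which brings them into the stated form.

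The main obstacle is the ``$\gs$'' half of the equivalence: making Sawyer's duality rigorous for a general, possibly non-doubling, weight $v$, carefully accounting for the several endpoint terms it generates (the $V(\infty)$ contribution, and the behaviour of $V$ as $t\to 0+$), and matching them one-to-one against the five terms delivered by Theorem~\ref{aux.thm.1} so that none is lost or double-counted. By contrast, once the reduction is in hand, the remaining steps are purely computational.
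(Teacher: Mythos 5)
Your route is essentially the paper's: both proofs reduce the cone inequality to Theorem \ref{aux.thm.1} applied with $(u,v)$ replaced by $\big(uV^{-2},\,V^{-p}v^{1-p}\big)$, and then use $\big(V^{-p}v^{1-p}\big)^{1-p'}=V^{p'}v$ and $\int_0^x V^{p'}v=V(x)^{p'+1}/(p'+1)$ (so the standing hypothesis of Theorem \ref{aux.thm.1} is automatic) to match the five terms. The only real difference is how the reduction equivalence is obtained. The paper gets it in one stroke by citing the known reduction theorem for supremum operators on the cone of non-increasing functions (\cite[Theorem 3.2]{GogStep}, cf.\ \cite[Theorem 2.3]{GogMusIHI}), whereas you set out to prove it yourself. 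Your ``$\lesssim$'' half is correct and clean ($V(\tau)^2f(\tau)\le\int_0^\tau 2Vvf$, $h:=2Vvf$, $\|h\|_{p,V^{-p}v^{1-p}}=2\|f\|_{p,v}$), but your sketch of the ``$\gtrsim$'' half has a genuine gap: after dualizing the outer Ces\`aro norm the functional of $f$ is $\int_0^\infty (R_uf)(t)\,a(t)\big(\int_t^\infty g\big)\,dt$, which is \emph{not} linear in $f$ because the supremum defining $R_uf$ sits inside the integral, so Sawyer's duality principle cannot be applied to it as you describe; given $h\ge 0$, producing a non-increasing $f$ with $R_uf$ dominating $\sup_{\tau\ge t}u(\tau)V(\tau)^{-2}\int_0^\tau h$ \emph{and} $\|f\|_{p,v}\lesssim\|h\|_{p,V^{-p}v^{1-p}}$ is precisely the nontrivial content of the supremum-operator reduction theorems. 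So either reproduce that argument in full or, as the paper does, simply cite \cite[Theorem 3.2]{GogStep}; with the reduction granted, the rest of your proposal coincides with the paper's proof.
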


\begin{proof}
	By \cite[Theorem 3.2]{GogStep} (cf. \cite[Theorem 2.3]{GogMusIHI}), we get that
	\begin{align*}
	\sup_{f \in \mp^{+,\dn} (0,\infty)} \frac{\bigg( \int_0^{\infty} \bigg( \int_0^x \bigg( \sup_{t \le \tau}
		u(\tau) f(\tau) \bigg) a(t)\,dt \bigg)^q w(x)\,dx \bigg)^{\frac{1}{q}}}{\bigg( \int_0^{\infty} f(s)^pv(s)\,ds\bigg)^{\frac{1}{p}}} & \\
	& \hspace{-7cm} \approx \sup_{h \ge  0} \frac{\bigg( \int_0^{\infty} \bigg( \int_0^x \bigg( \sup_{t \le \tau} u(\tau) V(\tau)^{-2} \int_0^{\tau} h(y)\,dy \bigg) a(t) \,dt \bigg)^q w(x)\,dx \bigg)^{\frac{1}{q}}}{\bigg( \int_0^{\infty} h(s)^p  V(s)^{-p} v(s)^{1-p}\,ds\bigg)^{\frac{1}{p}}}.
	\end{align*}
	
	By Theorem \ref{aux.thm.1}, we have that
	
	{\rm (i)} if $p \le q$, then
	\begin{align*}
    \sup_{h \ge  0} \frac{\bigg( \int_0^{\infty} \bigg( \int_0^x \bigg( \sup_{t \le \tau} u(\tau) V(\tau)^{-2} \int_0^{\tau} h(y)\,dy \bigg) a(t)\, dt \bigg)^q w(x)\,dx \bigg)^{\frac{1}{q}}}{\bigg( \int_0^{\infty} h(s)^p  V(s)^{-p} v(s)^{1-p}\,ds\bigg)^{\frac{1}{p}}} & \\
	& \hspace{-7cm} \approx \, \sup_{t \in (0,\infty)} \bigg( \int_0^t V(x)^{p'} v(x) \, \bigg( \int_x^t \bigg( \sup_{s \le \tau}u(\tau) V(\tau)^{-2}\bigg) a(s) \,ds \bigg)^{p'} \, dx \bigg)^{\frac{1}{p'}} \, \bigg( \int_t^{\infty} w(y) \,dy \bigg)^{\frac{1}{q}} \notag \\
	& \hspace{-6.5cm} + \sup_{t \in (0,\infty)} \bigg( \int_0^t V(x)^{p'} v(x)  \, dx \bigg)^{\frac{1}{p'}} \, \bigg( \int_t^{\infty} \, \bigg( \int_t^y \bigg( \sup_{s \le \tau}u(\tau) V(\tau)^{-2}\bigg) a(s)\,ds \bigg)^{q} w(y) \,dy \bigg)^{\frac{1}{q}} \\
	& \hspace{-6.5cm} + \, \sup_{x \in (0,\infty)} \bigg( \int_{[x,\infty)} \, d \, \bigg( - \sup_{t \le \tau} u(\tau)^{p'} V(\tau)^{-2p'} \bigg( \int_0^{\tau} V(s)^{p'} v(s)\,ds \bigg) \bigg) \bigg)^{\frac{1}{p'}} \, \bigg( \int_0^x A(y)^q w(y) \,dy \bigg)^{\frac{1}{q}} \notag \\
	& \hspace{-6.5cm} + \sup_{x \in (0,\infty)} \bigg( \int_{(0,x]} \,  A(t)^{p'} \, d \, \bigg( - \sup_{t \le \tau} u(\tau)^{p'} V(\tau)^{-2p'} \bigg( \int_0^{\tau} V(s)^{p'} v(s)\,ds \bigg) \bigg) \bigg)^{\frac{1}{p'}} \, \bigg( \int_x^{\infty} w(y) \,dy \bigg)^{\frac{1}{q}} \notag \\
	& \hspace{-6.5cm} + \bigg( \int_0^{\infty} A(y)^q w(y) \,dy \bigg)^{\frac{1}{q}} \, \lim_{t \rightarrow \infty} \bigg(\sup_{t \le \tau} u(\tau) V(\tau)^{-2} \bigg( \int_0^{\tau} V(s)^{p'} v(s)\,ds \bigg)^{\frac{1}{p'}}\bigg).		
	\end{align*}
	
	{\rm (ii)} if $q < p$, then
	\begin{align*}
	\sup_{h \ge  0} \frac{\bigg( \int_0^{\infty} \bigg( \int_0^x \bigg( \sup_{t \le \tau} u(\tau) V(\tau)^{-2} \int_0^{\tau} h(y)\,dy \bigg) a(t)\, dt \bigg)^q w(x)\,dx \bigg)^{\frac{1}{q}}}{\bigg( \int_0^{\infty} h(s)^p  V(s)^{-p} v(s)^{1-p}\,ds\bigg)^{\frac{1}{p}}} & \\
	& \hspace{-8cm} \approx \, \bigg( \int_0^{\infty}  \bigg( \int_0^t V(x)^{p'} v(x) \,dx \bigg)^{\frac{r}{q'}} V(t)^{p'} v(t) \, \bigg( \int_t^{\infty} \bigg( \int_t^z \bigg( \sup_{s \le y}u(y)V(y)^{-2}\bigg) a(s)\,ds \bigg)^{q} w(z)\, dz \bigg)^{\frac{r}{q}} \, dt \bigg)^{\frac{1}{r}} \notag \\
	& \hspace{-7.5cm} + \bigg( \int_0^{\infty} \bigg( \int_0^t V(x)^{p'} v(x) \bigg( \int_x^t \bigg( \sup_{s \le y}u(y)V(y)^{-2}\bigg) a(s)\,ds \bigg)^{p'} \, dx \bigg)^{\frac{r}{p'}} \bigg( \int_z^{\infty} w(s) \,ds \bigg)^{\frac{r}{p}} w(t)\,dt \bigg)^{\frac{1}{r}} \\
	& \hspace{-7.5cm} + \, \bigg( \int_0^{\infty} \bigg( \int_{[x,\infty)} \, d \, \bigg( - \bigg(\sup_{t \le \tau} u(\tau)^{p'}V(\tau)^{-2p'} \, \bigg( \int_0^{\tau} V(s)^{p'} v(s)\,ds \bigg) \bigg) \bigg) \bigg)^{\frac{r}{p'}} \, \bigg( \int_0^x A(y)^q w(y) \,dy \bigg)^{\frac{r}{p}} A(x)^q w(x) \,dx\bigg)^{\frac{1}{r}} \notag \\
	& \hspace{-7.5cm} + \bigg( \int_0^{\infty} \bigg( \int_{(0,x]} \,  A(t)^{p'} \, d \, \bigg( - \bigg(\sup_{t \le \tau} u(\tau)^{p'}V(\tau)^{-2p'} \, \bigg( \int_0^{\tau} V(s)^{p'} v(s)\,ds \bigg) \bigg) \bigg)  \bigg)^{\frac{r}{p'}} \, \bigg( \int_x^{\infty} w(y) \,dy \bigg)^{\frac{r}{p}} w(x)\,dx \bigg)^{\frac{1}{r}} \notag \\
	& \hspace{-7.5cm} + \bigg( \int_0^{\infty} A(y)^q w(y) \,dy \bigg)^{\frac{1}{q}} \, \lim_{t \rightarrow \infty} \bigg(\sup_{t \le \tau} u(\tau)V(\tau)^{-2} \bigg( \int_0^{\tau} V(s)^{p'} v(s)\,ds \bigg)^{\frac{1}{p'}}\bigg). 		
	\end{align*}        
	
\end{proof}

\

\section{The boundedness of $P_{u,b}$ from $L^p(v)$ into $\ces_{q}(w,a)$ on the cone of monotone non-increasing functions}\label{P}

\

In this section we characterize the boundedness of weighted Hardy operator $P_{u,b}$ from $L^p(v)$ into $\ces_q(w,a)$ on the cone of monotone non-increasing functions.
\begin{thm}\label{aux.thm.3}
Let $1 < p,\, q < \infty$ and $b \in \W\I$ be such that $b(t) > 0$ for a.e. $t\in (0,\infty)$. Assume that $u \in \W\I \cap C\I$ and $a,\,v,\,w \in \W\I$.

{\rm (i)} If $p \le q$, then
\begin{align*}
\sup_{f \in \mp^{+,\dn} (0,\infty)} \frac{\bigg( \int_0^{\infty} \bigg( \int_0^x  (P_{u,b} f)(t)  a(t)\,dt \bigg)^q w(x)\,dx \bigg)^{\frac{1}{q}}}{\bigg( \int_0^{\infty} f(s)^pv(s)\,ds\bigg)^{\frac{1}{p}}} & \\
& \hspace{-6cm} \approx \sup_{x \in (0,\infty)} \bigg( \int_0^x \bigg(\int_0^t a(y)u(y)\,dy\bigg)^q w(t)\,dt \bigg)^{\frac{1}{q}} \bigg( \int_x^{\infty} V(s)^{-p'} v(s)\,ds \bigg)^{\frac{1}{p'}} \\
& \hspace{-5.5cm} + \sup_{x \in (0,\infty)} \bigg( \int_x^{\infty} w(t)\,dt \bigg)^{\frac{1}{q}} \bigg( \int_0^x \bigg(\int_0^s a(y)u(y)\,dy\bigg)^{p'} V(s)^{-p'} v(s)\,ds \bigg)^{\frac{1}{p'}} \\
& \hspace{-5.5cm} + \sup_{x \in (0,\infty)} \bigg( \int_x^{\infty} \bigg( \int_x^t \frac{a(\tau)}{B(\tau)} u(\tau)\,d\tau \bigg)^q w(t)\,dt \bigg)^{\frac{1}{q}} \bigg( \int_0^x \bigg( \frac{B(s)}{V(s)}\bigg)^{p'} v(s)\,ds \bigg)^{\frac{1}{p'}} \\
& \hspace{-5.5cm} + \sup_{x \in (0,\infty)} \bigg( \int_x^{\infty} w(t)\,dt \bigg)^{\frac{1}{q}} \bigg( \int_0^x \bigg( \int_s^x \frac{a(\tau)}{B(\tau)} u(\tau)\,d\tau \bigg)^{p'} \bigg( \frac{B(s)}{V(s)}\bigg)^{p'} v(s)\,ds \bigg)^{\frac{1}{p'}} \\
& \hspace{-5.5cm} + \bigg( \int_0^{\infty} v(s)\,ds\bigg)^{-\frac{1}{p}}\, \bigg( \int_0^{\infty} \bigg(\int_0^x a(t)u(t)\,dt\bigg)^q w(x)\,dx \bigg)^{\frac{1}{q}};
\end{align*}

{\rm (ii)} If $q< p$, then
\begin{align*}
\sup_{f \in \mp^{+,\dn} (0,\infty)} \frac{\bigg( \int_0^{\infty} \bigg( \int_0^x  (P_{u,b} f)(t)  a(t)\,dt \bigg)^q w(x)\,dx \bigg)^{\frac{1}{q}}}{\bigg( \int_0^{\infty} f(s)^pv(s)\,ds\bigg)^{\frac{1}{p}}} & \\
& \hspace{-6cm} \approx \bigg( \int_0^{\infty} \bigg( \int_0^x \bigg(\int_0^t a(y)u(y)\,dy\bigg)^q w(t)\,dt \bigg)^{\frac{r}{p}} \, \bigg( \int_x^{\infty}  V(z)^{-p'} v(z)\,dz\bigg)^{\frac{r}{p'}} \, \bigg(\int_0^x a(y)u(y)\,dy\bigg)^q w(x)\,dx\bigg)^{\frac{1}{r}} \\
& \hspace{-5.5cm} + \bigg( \int_0^{\infty} \bigg( \int_x^{\infty} w(t)\,dt \bigg)^{\frac{r}{p}} \, \bigg( \int_0^x  \bigg(\int_0^z a(y)u(y)\,dy\bigg)^{p'} V(z)^{-p'} v(z)\,dz\bigg)^{\frac{r}{p'}} \, w(x)\,dx\bigg)^{\frac{1}{r}} \\
& \hspace{-5.5cm} + \bigg( \int_0^{\infty} \bigg( \int_x^{\infty} w(t)\,dt \bigg)^{\frac{r}{p}} \, \bigg( \int_0^x \bigg( \int_z^x \frac{a(\tau)}{B(\tau)} u(\tau)\,d\tau \bigg)^{p'} \bigg( \frac{B(z)}{V(z)}\bigg)^{p'} v(z)\,dz\bigg)^{\frac{r}{p'}} \, w(x)\,dx\bigg)^{\frac{1}{r}} \\
& \hspace{-5.5cm} + \bigg( \int_0^{\infty} \bigg( \int_x^{\infty} w(t) \bigg( \int_x^t \frac{a(\tau)}{B(\tau)} u(\tau)\,d\tau \bigg)^q \,dt \bigg)^{\frac{r}{q}}  \, \bigg( \int_0^x \bigg( \frac{B(s)}{V(s)}\bigg)^{p'} v(s)\,ds\bigg)^{\frac{r}{q'}} \, \bigg( \frac{B(x)}{V(x)}\bigg)^{p'} v(x) \,dx\bigg)^{\frac{1}{r}} \\
& \hspace{-5.5cm} + \bigg( \int_0^{\infty} v(s)\,ds\bigg)^{-\frac{1}{p}}\,\bigg( \int_0^{\infty} \bigg(\int_0^x a(y)u(y)\,dy\bigg)^q w(x)\,dx \bigg)^{\frac{1}{q}}.	
\end{align*}

\end{thm}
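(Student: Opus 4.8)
The plan is to recast the left-hand side as a Hardy operator with an Oinarov kernel acting on the cone of non-increasing functions and then to split off two scalar weighted inequalities. By Fubini's theorem,
\[
\int_0^x (P_{u,b}f)(t)\,a(t)\,dt = \int_0^x \frac{u(t)a(t)}{B(t)}\bigg(\int_0^t f(s)b(s)\,ds\bigg)dt = \int_0^x f(s)\,b(s)\,k(x,s)\,ds,
\]
where $k(x,s):=\int_s^x \frac{u(t)a(t)}{B(t)}\,dt$ for $0<s<x$. The function $(x,s)\mapsto k(x,s)$ is additive, $k(x,s)=k(x,\sigma)+k(\sigma,s)$ for $0<s\le\sigma\le x$, so $b(s)k(x,s)$ is an Oinarov kernel and Theorem~\ref{aux.thm.3} is precisely the statement characterizing $\|x\mapsto\int_0^x f(s)b(s)k(x,s)\,ds\|_{q,w,\I}\le C\|f\|_{p,v,\I}$ on $f\in\mp^{+,\dn}\I$.

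Next I would integrate by parts in the inner integral: for a right-continuous $f\in\mp^{+,\dn}\I$ one has $\int_0^t fb = f(t)B(t)+\int_{(0,t)}B(s)\,d[-f](s)$ (the endpoint behaviour being handled as in the proofs of Corollaries~\ref{cor.IBP.0} and \ref{cor.IBP}), whence, after one more use of Fubini,
\[
\int_0^x (P_{u,b}f)(t)\,a(t)\,dt = \underbrace{\int_0^x f(t)u(t)a(t)\,dt}_{=:(T_A f)(x)} + \underbrace{\int_{(0,x)}\bigg(\int_s^x \frac{u(\tau)a(\tau)}{B(\tau)}\,d\tau\bigg)B(s)\,d[-f](s)}_{=:(T_B f)(x)}.
\]
Since both summands are non-negative, the norm in question is comparable to $\|T_Af\|_{q,w,\I}+\|T_Bf\|_{q,w,\I}$, and it remains to evaluate the two suprema $\mathcal A:=\sup\|T_Af\|_{q,w,\I}/\|f\|_{p,v,\I}$ and $\mathcal B:=\sup\|T_Bf\|_{q,w,\I}/\|f\|_{p,v,\I}$, both taken over $f\in\mp^{+,\dn}\I$.

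For $\mathcal A$, the operator $T_A$ is the Hardy operator $f\mapsto\int_0^\cdot f\,\varphi$ with $\varphi=ua$, restricted to non-increasing $f$; I would invoke the known characterization of this inequality on $\mp^{+,\dn}\I$ (see \cite{GogStep}, cf.\ \cite{GogMusIHI}, \cite{cgmp2008}) and rewrite the resulting ``level'' factor $V(x)^{-1/p}$, $V(x)=\int_0^x v$, by the elementary identity $\big(\int_x^\infty V^{-p'}v\big)^{1/p'}+\big(\int_0^\infty v\big)^{-1/p}\approx V(x)^{-1/p}$; this yields the first, second and fifth terms of the statement in the form appropriate to each case. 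For $\mathcal B$, I would set $\mu:=-df$; the constant part of $f$ lies in $\ker T_B$ and may be discarded, so $\|f\|_{p,v,\I}=\|s\mapsto\int_{(s,\infty)}d\mu\|_{p,v,\I}$ while $(T_Bf)(x)=\int_{(0,x)}k_B(x,s)\,d\mu(s)$ with the Oinarov kernel $k_B(x,s)=B(s)\int_s^x\frac{ua}{B}$. Applying the reduction theorem for Hardy operators with Oinarov kernels on the cone of non-increasing functions from \cite{GogStep} (cf.\ \cite{GogMusIHI}) turns $\mathcal B$ into an \emph{unrestricted} weighted kernel inequality with kernel $\int_s^\cdot\frac{ua}{B}$ and with weight $\big(B(s)/V(s)\big)^{p'}v(s)$ under the inner integral; its Oinarov--Bradley conditions when $p\le q$ (\cite[Theorem~1.1]{Oinar}) and its Mazya-type integral conditions when $q<p$ (\cite[Theorem~1.2]{Oinar}) are exactly the third and fourth terms of the statement. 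Combining the formulas for $\mathcal A$ and $\mathcal B$ gives parts (i) and (ii).

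The step I expect to be the main obstacle is the analysis of $\mathcal B$: passing from the Stieltjes form $\int_{(0,x)}k_B(x,s)\,d\mu(s)$ with the Copson-type right-hand side $\|\int_\cdot^\infty d\mu\|_{p,v}$ to a genuinely unrestricted kernel inequality --- that is, isolating the correct weight $(B/V)^{p'}v$ and verifying that no additional endpoint contribution at $0$ or at $\infty$ survives --- requires the delicate monotone-cone reduction together with careful endpoint bookkeeping. The second, more routine, difficulty is the case $q<p$, where each supremum above must be replaced by an $L^r$-integral and the two characterizations produced simultaneously; this is standard once the case $p\le q$ is in hand.
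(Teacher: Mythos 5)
Your overall architecture is close in spirit to the paper's: the paper also arrives at exactly the split ``two scalar Hardy inequalities $+$ an Oinarov--kernel inequality whose conditions carry the weight $(B/V)^{p'}v$ $+$ a constant-function term'', and your treatment of $\mathcal{A}$ (the Hardy operator with weight $ua$ on the cone, rewritten via $\big(\int_x^\infty V^{-p'}v\big)^{1/p'}+V(\infty)^{-1/p}\approx V(x)^{-1/p}$) is sound and yields the first, second and fifth terms. The genuine gap is precisely the step you flag as the main obstacle: the claim that $\mathcal{B}=\sup_{f\in\mp^{+,\dn}\I}\|T_Bf\|_{q,w,\I}/\|f\|_{p,v,\I}$ reduces, with no surviving endpoint contribution, to the unrestricted kernel inequality whose Oinarov/Maz'ya conditions are the third and fourth terms. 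The reduction theorems you invoke (\cite{GogStep}, cf.\ \cite{GogMusIHI}) concern positive integral operators acting on $f$ itself; $T_Bf(x)=\int_{(0,x)}B(s)\big(\int_s^x\frac{ua}{B}\big)\,d[-f](s)$ acts on the measure $d[-f]$ and is not monotone in $f$ (as a function of $f$ it is the difference $\int_0^x f\,b\,k(x,\cdot)-\int_0^x f\,ua$), so neither the statement nor the duality proof of those reduction theorems applies to $T_B$ separately. Worse, the asserted conclusion is false in general: testing with $f=\chi_{(0,\tau)}$, i.e.\ $d[-f]=\delta_\tau$, gives $\mathcal{B}\ge\sup_{\tau>0}B(\tau)\,V(\tau)^{-1/p}\big(\int_\tau^\infty\big(\int_\tau^x\frac{ua}{B}\big)^qw(x)\,dx\big)^{1/q}$, and since $B(\tau)\int_\tau^x\frac{ua}{B}\le\int_0^x ua$, this quantity lives at the scale of the first/fifth conditions rather than being dominated by the third and fourth; taking, say, $p=q=2$, $u=a=1$, $w$ essentially supported on $(1,2)$, $v$ concentrated near the origin and $b$ carrying its mass away from the support of $v$, the third and fourth terms stay small while this lower bound for $\mathcal{B}$ is arbitrarily large. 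Hence $\mathcal{B}\not\approx(3)+(4)$, and your proof of the upper estimate for the left-hand side collapses at this point.

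The theorem itself is unaffected (the extra contributions hidden in $\mathcal{B}$ are absorbed by the terms produced by $\mathcal{A}$); what fails is the order of operations, and the repair is exactly the paper's proof: apply the reduction theorem \cite[Theorem 3.1]{GogStep} once, to the full positive kernel operator $f\mapsto\int_0^x f(s)b(s)\big(\int_s^x\frac{u(\tau)a(\tau)}{B(\tau)}\,d\tau\big)ds$ (your Fubini form of $\int_0^x(P_{u,b}f)\,a$), which turns the cone problem into the unrestricted problem for $h$ via $f\mapsto\int_\cdot^\infty h$ together with the constant-function term; only then split, by Fubini, $\int_0^t\big(\int_\tau^\infty h\big)b\,d\tau=B(t)\int_t^\infty h+\int_0^t hB$. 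This produces the two scalar Hardy inequalities (terms one and two), the Oinarov-kernel inequality against the weight $B^{-p}V^pv^{1-p}$, characterized by \cite[Theorems 1.1 and 1.2]{Oinar} as terms three and four, and the fifth term. Splitting before reducing, as you propose, is harmless only for the $T_A$ part.
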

	
\begin{proof}
	By \cite[Theorem 3.1]{GogStep}, using Fubini's Theorem, we get that
	\begin{align*}
	\sup_{f \in \mp^{+,\dn} (0,\infty)} \frac{\bigg( \int_0^{\infty} \bigg( \int_0^x  (P_{u,b} f)(t)  a(t)\,dt \bigg)^q w(x)\,dx \bigg)^{\frac{1}{q}}}{\bigg( \int_0^{\infty} f(s)^pv(s)\,ds\bigg)^{\frac{1}{p}}} & \\
	& \hspace{-6cm} = \sup_{f \in \mp^{+,\dn} (0,\infty)} \frac{\bigg( \int_0^{\infty} \bigg( \int_0^x \bigg( \frac{u(t)}{B(t)} \int_0^t f(\tau) b(\tau) \,d \tau \bigg) a(t)\,dt \bigg)^q w(x)\,dx \bigg)^{\frac{1}{q}}}{\bigg( \int_0^{\infty} f(s)^pv(s)\,ds\bigg)^{\frac{1}{p}}} & \\
	& \hspace{-6cm} \approx \sup_{h \ge  0} \frac{\bigg( \int_0^{\infty} \bigg( \int_0^x \bigg( \frac{u(t)}{B(t)} \int_0^t \bigg( \int_{\tau}^{\infty} h(y)\,dy \bigg) b(\tau) \,d \tau \bigg) a(t) \, dt \bigg)^q w(x)\,dx \bigg)^{\frac{1}{q}}}{\bigg( \int_0^{\infty} h(s)^p  V(s)^p v(s)^{1-p}\,ds\bigg)^{\frac{1}{p}}} + \frac{\bigg( \int_0^{\infty} \bigg(\int_0^x a(t)u(t)\,dt\bigg)^q w(x)\,dx \bigg)^{\frac{1}{q}}}{\bigg( \int_0^{\infty} v(s)\,ds\bigg)^{\frac{1}{p}}} \\
	& \hspace{-6cm} \approx \sup_{h \ge  0} \frac{\bigg( \int_0^{\infty} \bigg( \int_0^x \bigg( \int_t^{\infty} h(y)\,dy \bigg) a(t)u(t)\, dt \bigg)^q w(x)\,dx \bigg)^{\frac{1}{q}}}{\bigg( \int_0^{\infty} h(s)^p  V(s)^p v(s)^{1-p}\,ds\bigg)^{\frac{1}{p}}} \\
	& \hspace{-5.5cm} + \sup_{h \ge  0} \frac{\bigg( \int_0^{\infty} \bigg( \int_0^x \bigg( \frac{u(t)}{B(t)} \int_0^t  h(y) B(y)\,dy \bigg) a(t) \, dt \bigg)^q w(x)\,dx \bigg)^{\frac{1}{q}}}{\bigg( \int_0^{\infty} h(s)^p  V(s)^p v(s)^{1-p}\,ds\bigg)^{\frac{1}{p}}} + \frac{\bigg( \int_0^{\infty}\bigg(\int_0^x a(t)u(t)\,dt\bigg)^q w(x)\,dx \bigg)^{\frac{1}{q}}}{\bigg( \int_0^{\infty} v(s)\,ds\bigg)^{\frac{1}{p}}} \\
	& \hspace{-6cm} \approx \sup_{h \ge  0} \frac{\bigg( \int_0^{\infty} \bigg( \int_x^{\infty} h(y)\,dy \bigg)^q \bigg(\int_0^x a(t)u(t)\,dt\bigg)^q w(x)\,dx \bigg)^{\frac{1}{q}}}{\bigg( \int_0^{\infty} h(s)^p  V(s)^p v(s)^{1-p}\,ds\bigg)^{\frac{1}{p}}} + \sup_{h \ge  0} \frac{\bigg( \int_0^{\infty} \bigg( \int_0^x  h(y) \bigg(\int_0^y a(t)u(t)\,dt\bigg)\,dy \bigg)^q w(x)\,dx \bigg)^{\frac{1}{q}}}{\bigg( \int_0^{\infty} h(s)^p  V(s)^p v(s)^{1-p}\,ds\bigg)^{\frac{1}{p}}} \\
    & \hspace{-5.5cm} + \sup_{h \ge  0} \frac{\bigg( \int_0^{\infty} \bigg( \int_0^x h(y) \bigg( \int_y^x \frac{a(t)}{B(t)} u(t)\, dt \bigg) \,dy \bigg)^q w(x)\,dx \bigg)^{\frac{1}{q}}}{\bigg( \int_0^{\infty} h(s)^p B(s)^{-p} V(s)^p v(s)^{1-p}\,ds\bigg)^{\frac{1}{p}}} + \frac{\bigg( \int_0^{\infty} \bigg(\int_0^x a(t)u(t)\,dt\bigg)^q w(x)\,dx \bigg)^{\frac{1}{q}}}{\bigg( \int_0^{\infty} v(s)\,ds\bigg)^{\frac{1}{p}}}.	
	\end{align*}

	{\rm (i)} Let $p \le q$. Using the characterizations of weighted Hardy-type inequalities (see, for instance, \cite[Section 1]{ok}), by \cite[Theorem 1.1]{Oinar}, we obtain that

	\begin{align*}
   	\sup_{f \in \mp^{+,\dn} (0,\infty)} \frac{\bigg( \int_0^{\infty} \bigg( \int_0^x  (P_{u,b} f)(t)  a(t)\,dt \bigg)^q w(x)\,dx \bigg)^{\frac{1}{q}}}{\bigg( \int_0^{\infty} f(s)^pv(s)\,ds\bigg)^{\frac{1}{p}}} & \\
    & \hspace{-6cm} \approx \sup_{x \in (0,\infty)} \bigg( \int_0^x \bigg(\int_0^t a(y)u(y)\,dy\bigg)^q w(t)\,dt \bigg)^{\frac{1}{q}} \bigg( \int_x^{\infty} V(s)^{-p'} v(s)\,ds \bigg)^{\frac{1}{p'}} \\
    & \hspace{-5.5cm} + \sup_{x \in (0,\infty)} \bigg( \int_x^{\infty} w(t)\,dt \bigg)^{\frac{1}{q}} \bigg( \int_0^x \bigg(\int_0^s a(y)u(y)\,dy\bigg)^{p'} V(s)^{-p'} v(s)\,ds \bigg)^{\frac{1}{p'}} \\
    & \hspace{-5.5cm} + \sup_{x \in (0,\infty)} \bigg( \int_x^{\infty} \bigg( \int_x^t \frac{a(\tau)}{B(\tau)} u(\tau)\,d\tau \bigg)^q w(t)\,dt \bigg)^{\frac{1}{q}} \bigg( \int_0^x \bigg( \frac{B(s)}{V(s)}\bigg)^{p'} v(s)\,ds \bigg)^{\frac{1}{p'}} \\
    & \hspace{-5.5cm} + \sup_{x \in (0,\infty)} \bigg( \int_x^{\infty} w(t)\,dt \bigg)^{\frac{1}{q}} \bigg( \int_0^x \bigg( \int_s^x \frac{a(\tau)}{B(\tau)} u(\tau)\,d\tau \bigg)^{p'} \bigg( \frac{B(s)}{V(s)}\bigg)^{p'} v(s)\,ds \bigg)^{\frac{1}{p'}} \\
    & \hspace{-5.5cm} + \bigg( \int_0^{\infty} v(s)\,ds\bigg)^{-\frac{1}{p}}\, \bigg( \int_0^{\infty} \bigg(\int_0^x a(t)u(t)\,dt\bigg)^q w(x)\,dx \bigg)^{\frac{1}{q}}.	
    \end{align*}

	{\rm (ii)} Let now $q < p$. Using the characterizations of weighted Hardy-type inequalities (see, for instance, \cite[Section 1]{ok}), by \cite[Theorem 1.2]{Oinar}, we obtain that
	\begin{align*}
    \sup_{f \in \mp^{+,\dn} (0,\infty)} \frac{\bigg( \int_0^{\infty} \bigg( \int_0^x  (P_{u,b} f)(t)  a(t)\,dt \bigg)^q w(x)\,dx \bigg)^{\frac{1}{q}}}{\bigg( \int_0^{\infty} f(s)^pv(s)\,ds\bigg)^{\frac{1}{p}}} & \\
    & \hspace{-6cm} \approx \bigg( \int_0^{\infty} \bigg( \int_0^x \bigg(\int_0^t a(y)u(y)\,dy\bigg)^q w(t)\,dt \bigg)^{\frac{r}{p}} \, \bigg( \int_x^{\infty}  V(z)^{-p'} v(z)\,dz\bigg)^{\frac{r}{p'}} \, \bigg(\int_0^x a(y)u(y)\,dy\bigg)^q w(x)\,dx\bigg)^{\frac{1}{r}} \\
    & \hspace{-5.5cm} + \bigg( \int_0^{\infty} \bigg( \int_x^{\infty} w(t)\,dt \bigg)^{\frac{r}{p}} \, \bigg( \int_0^x  \bigg(\int_0^z a(y)u(y)\,dy\bigg)^{p'} V(z)^{-p'} v(z)\,dz\bigg)^{\frac{r}{p'}} \, w(x)\,dx\bigg)^{\frac{1}{r}} \\
    & \hspace{-5.5cm} + \bigg( \int_0^{\infty} \bigg( \int_x^{\infty} w(t)\,dt \bigg)^{\frac{r}{p}} \, \bigg( \int_0^x \bigg( \int_z^x \frac{a(\tau)}{B(\tau)} u(\tau)\,d\tau \bigg)^{p'} \bigg( \frac{B(z)}{V(z)}\bigg)^{p'} v(z)\,dz\bigg)^{\frac{r}{p'}} \, w(x)\,dx\bigg)^{\frac{1}{r}} \\
    & \hspace{-5.5cm} + \bigg( \int_0^{\infty} \bigg( \int_x^{\infty} w(t) \bigg( \int_x^t \frac{a(\tau)}{B(\tau)} u(\tau)\,d\tau \bigg)^q \,dt \bigg)^{\frac{r}{q}}  \, \bigg( \int_0^x \bigg( \frac{B(s)}{V(s)}\bigg)^{p'} v(s)\,ds\bigg)^{\frac{r}{q'}} \, \bigg( \frac{B(x)}{V(x)}\bigg)^{p'} v(x) \,dx\bigg)^{\frac{1}{r}} \\
    & \hspace{-5.5cm} + \bigg( \int_0^{\infty} v(s)\,ds\bigg)^{-\frac{1}{p}}\,\bigg( \int_0^{\infty} \bigg(\int_0^x a(y)u(y)\,dy\bigg)^q w(x)\,dx \bigg)^{\frac{1}{q}}.	
    \end{align*}
	
	The proof is completed.
\end{proof}

\

\section{The boundedness of $T_{u,b}$ from $L^p(v)$ into $\ces_{q}(w,a)$ on the cone of monotone non-increasing functions}\label{T}

\

In this section we combine the results from previous two sections to present the characterization of the boundedness of $T_{u,b}$ from $L^p(v)$ into $\ces_q(w,a)$ on the cone of monotone non-increasing functions.	
\begin{thm}\label{thm.T}
	Let $1 < p,\, q < \infty$ and $b \in \W\I$ be such that $b(t) > 0$ for a.e. $t\in (0,\infty)$. Assume that $u \in \W\I \cap C\I$ and $a,\,v,\,w \in \W\I$. Moreover, assume that condition \eqref{add.cond.} holds.

	{\rm (i)} If $p \le q$, then
	\begin{align*}
	\sup_{f \in \mp^{+,\dn} (0,\infty)} \frac{\bigg( \int_0^{\infty} \bigg( \int_0^x (T_{u,b} f) (t) a(t)\,dt \bigg)^q w(x)\,dx \bigg)^{\frac{1}{q}}}{\bigg( \int_0^{\infty} f(s)^pv(s)\,ds\bigg)^{\frac{1}{p}}} & \\
	& \hspace{-6cm} \approx \, \sup_{t \in (0,\infty)} \bigg( \int_0^t V(x)^{p'} v(x) \, \bigg( \int_x^t \bigg( \sup_{s \le \tau}u(\tau) V(\tau)^{-2}\bigg) a(s)\,ds \bigg)^{p'} \, dx \bigg)^{\frac{1}{p'}} \, \bigg( \int_t^{\infty} w(y) \,dy \bigg)^{\frac{1}{q}} \notag \\
	& \hspace{-5.5cm} + \, \sup_{t \in (0,\infty)} \bigg( \int_0^t V(x)^{p'} v(x)  \, dx \bigg)^{\frac{1}{p'}} \, \bigg( \int_t^{\infty} \, \bigg( \int_t^y \bigg( \sup_{s \le \tau}u(\tau) V(\tau)^{-2}\bigg) a(s)\,ds \bigg)^{q} w(y) \,dy \bigg)^{\frac{1}{q}} \\
	& \hspace{-5.5cm} + \, \sup_{x \in (0,\infty)} \bigg( \int_{[x,\infty)} \, d \, \bigg( - \sup_{t \le \tau} u(\tau)^{p'} V(\tau)^{-2p'} \bigg( \int_0^{\tau} V(s)^{p'} v(s)\,ds \bigg) \bigg) \bigg)^{\frac{1}{p'}} \, \bigg( \int_0^x A(y)^q w(y) \,dy \bigg)^{\frac{1}{q}} \notag \\
	& \hspace{-5.5cm} + \, \sup_{x \in (0,\infty)} \bigg( \int_{(0,x]} \,  A(t)^{p'} \, d \, \bigg( - \sup_{t \le \tau} u(\tau)^{p'} V(\tau)^{-2p'} \bigg( \int_0^{\tau} V(s)^{p'} v(s)\,ds \bigg) \bigg) \bigg)^{\frac{1}{p'}} \, \bigg( \int_x^{\infty} w(y) \,dy \bigg)^{\frac{1}{q}} \notag \\
	& \hspace{-5.5cm} + \, \bigg( \int_0^{\infty} A(y)^q w(y) \,dy \bigg)^{\frac{1}{q}} \, \lim_{t \rightarrow \infty} \bigg(\sup_{t \le \tau} u(\tau) V(\tau)^{-2} \bigg( \int_0^{\tau} V(s)^{p'} v(s)\,ds \bigg)^{\frac{1}{p'}}\bigg) \\
	& \hspace{-5.5cm} + \, \sup_{x \in (0,\infty)} \bigg( \int_0^x \bigg(\int_0^t a(y)\bar{u}(y)\,dy\bigg)^q w(t)\,dt \bigg)^{\frac{1}{q}} \bigg( \int_x^{\infty} V(s)^{-p'} v(s)\,ds \bigg)^{\frac{1}{p'}} \\
	& \hspace{-5.5cm} + \, \sup_{x \in (0,\infty)} \bigg( \int_x^{\infty} w(t)\,dt \bigg)^{\frac{1}{q}} \bigg( \int_0^x \bigg(\int_0^s a(y)\bar{u}(y)\,dy\bigg)^{p'} V(s)^{-p'} v(s)\,ds \bigg)^{\frac{1}{p'}} \\
	& \hspace{-5.5cm} + \, \sup_{x \in (0,\infty)} \bigg( \int_x^{\infty} \bigg( \int_x^t \frac{a(\tau)}{B(\tau)} \bar{u}(\tau)\,d\tau \bigg)^q w(t)\,dt \bigg)^{\frac{1}{q}} \bigg( \int_0^x \bigg( \frac{B(s)}{V(s)}\bigg)^{p'} v(s)\,ds \bigg)^{\frac{1}{p'}} \\
	& \hspace{-5.5cm} + \, \sup_{x \in (0,\infty)} \bigg( \int_x^{\infty} w(t)\,dt \bigg)^{\frac{1}{q}} \bigg( \int_0^x \bigg( \int_s^x \frac{a(\tau)}{B(\tau)} \bar{u}(\tau)\,d\tau \bigg)^{p'} \bigg( \frac{B(s)}{V(s)}\bigg)^{p'} v(s)\,ds \bigg)^{\frac{1}{p'}} \\
	& \hspace{-5.5cm} + \, \bigg( \int_0^{\infty} v(s)\,ds\bigg)^{-\frac{1}{p}}\, \bigg( \int_0^{\infty} \bigg(\int_0^x a(t)\bar{u}(t)\,dt\bigg)^q w(x)\,dx \bigg)^{\frac{1}{q}};
	\end{align*}
	
	{\rm (ii)} If $q < p$, then
	\begin{align*}
	\sup_{f \in \mp^{+,\dn} (0,\infty)} \frac{\bigg( \int_0^{\infty} \bigg( \int_0^x (T_{u,b} f) (t) a(t)\,dt \bigg)^q w(x)\,dx \bigg)^{\frac{1}{q}}}{\bigg( \int_0^{\infty} f(s)^p v(s)\,ds\bigg)^{\frac{1}{p}}} & \\
	& \hspace{-6cm} \approx \, \bigg( \int_0^{\infty}  \bigg( \int_0^t V(x)^{p'} v(x) \,dx \bigg)^{\frac{r}{q'}} V(t)^{p'} v(t) \, \bigg( \int_t^{\infty} \bigg( \int_t^z \bigg( \sup_{s \le y}u(y)V(y)^{-2}\bigg) a(s)\,ds \bigg)^{q} w(z)\, dz \bigg)^{\frac{r}{q}} \, dt \bigg)^{\frac{1}{r}} \notag \\
	& \hspace{-5.5cm} + \, \bigg( \int_0^{\infty} \bigg( \int_0^t V(x)^{p'} v(x) \bigg( \int_x^t \bigg( \sup_{s \le y}u(y)V(y)^{-2}\bigg) a(s)\,ds \bigg)^{p'} \, dx \bigg)^{\frac{r}{p'}} \bigg( \int_z^{\infty} w(s) \,ds \bigg)^{\frac{r}{p}} w(t)\,dt \bigg)^{\frac{1}{r}} \\
	& \hspace{-5.5cm} + \, \bigg( \int_0^{\infty} \bigg( \int_{[x,\infty)} \, d \, \bigg( - \bigg(\sup_{t \le \tau} u(\tau)^{p'}V(\tau)^{-2p'} \, \bigg( \int_0^{\tau} V(s)^{p'} v(s)\,ds \bigg) \bigg) \bigg) \bigg)^{\frac{r}{p'}} \, \bigg( \int_0^x A(y)^q w(y) \,dy \bigg)^{\frac{r}{p}} x^q w(x) \,dx\bigg)^{\frac{1}{r}} \notag \\
	& \hspace{-5.5cm} + \, \bigg( \int_0^{\infty} \bigg( \int_{(0,x]} \,  A(t)^{p'} \, d \, \bigg( - \bigg(\sup_{t \le \tau} u(\tau)^{p'}V(\tau)^{-2p'} \, \bigg( \int_0^{\tau} V(s)^{p'} v(s)\,ds \bigg) \bigg) \bigg)  \bigg)^{\frac{r}{p'}} \, \bigg( \int_x^{\infty} w(y) \,dy \bigg)^{\frac{r}{p}} w(x)\,dx \bigg)^{\frac{1}{r}} \notag \\
	& \hspace{-5.5cm} + \, \bigg( \int_0^{\infty} A(y)^q w(y) \,dy \bigg)^{\frac{1}{q}} \, \lim_{t \rightarrow \infty} \bigg(\sup_{t \le \tau} u(\tau)V(\tau)^{-2} \bigg( \int_0^{\tau} V(s)^{p'} v(s)\,ds \bigg)^{\frac{1}{p'}}\bigg) \\
	& \hspace{-5.5cm} + \, \bigg( \int_0^{\infty} \bigg( \int_0^x \bigg(\int_0^t a(y)\bar{u}(y)\,dy\bigg)^q w(t)\,dt \bigg)^{\frac{r}{p}} \, \bigg( \int_x^{\infty}  V(z)^{-p'} v(z)\,dz\bigg)^{\frac{r}{p'}} \, \bigg(\int_0^x a(y)u(y)\,dy\bigg)^q w(x)\,dx\bigg)^{\frac{1}{r}} \\
	& \hspace{-5.5cm} + \, \bigg( \int_0^{\infty} \bigg( \int_x^{\infty} w(t)\,dt \bigg)^{\frac{r}{p}} \, \bigg( \int_0^x  \bigg(\int_0^z a(y)\bar{u}(y)\,dy\bigg)^{p'} V(z)^{-p'} v(z)\,dz\bigg)^{\frac{r}{p'}} \, w(x)\,dx\bigg)^{\frac{1}{r}} \\
	& \hspace{-5.5cm} + \, \bigg( \int_0^{\infty} \bigg( \int_x^{\infty} w(t)\,dt \bigg)^{\frac{r}{p}} \, \bigg( \int_0^x \bigg( \int_z^x \frac{a(\tau)}{B(\tau)} \bar{u}(\tau)\,d\tau \bigg)^{p'} \bigg( \frac{B(z)}{V(z)}\bigg)^{p'} v(z)\,dz\bigg)^{\frac{r}{p'}} \, w(x)\,dx\bigg)^{\frac{1}{r}} \\
	& \hspace{-5.5cm} + \, \bigg( \int_0^{\infty} \bigg( \int_x^{\infty} w(t) \bigg( \int_x^t \frac{a(\tau)}{B(\tau)} \bar{u}(\tau)\,d\tau \bigg)^q \,dt \bigg)^{\frac{r}{q}}  \, \bigg( \int_0^x \bigg( \frac{B(s)}{V(s)}\bigg)^{p'} v(s)\,ds\bigg)^{\frac{r}{q'}} \, \bigg( \frac{B(x)}{V(x)}\bigg)^{p'} v(x) \,dx\bigg)^{\frac{1}{r}} \\
	& \hspace{-5.5cm} + \, \bigg( \int_0^{\infty} v(s)\,ds\bigg)^{-\frac{1}{p}}\,\bigg( \int_0^{\infty} \bigg(\int_0^x a(y)\bar{u}(y)\,dy\bigg)^q w(x)\,dx \bigg)^{\frac{1}{q}}.
	\end{align*}   
	
\end{thm}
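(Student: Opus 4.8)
The plan is to derive the statement from the pointwise splitting \eqref{Split} together with the two characterizations already established above: Theorem~\ref{thm.R} for the supremal operator $R_u$ and Theorem~\ref{aux.thm.3} for the weighted Hardy operator $P_{\bar u,b}$. Since condition \eqref{add.cond.} is assumed, \eqref{Split} supplies a constant $c\geq 1$ such that
$$
c^{-1}\big[(R_uf)(t)+(P_{\bar u,b}f)(t)\big]\leq (T_{u,b}f)(t)\leq c\,\big[(R_uf)(t)+(P_{\bar u,b}f)(t)\big],\qquad t\in(0,\infty),
$$
for every $f\in\mp^{+,\dn}(0,\infty)$.

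First I would integrate this chain of inequalities against $a(t)\,dt$ over $(0,x)$, which preserves it since $a\geq 0$, and then apply $\|\cdot\|_{q,w,\I}$. As $1<q<\infty$ this is a genuine norm and all the functions involved are non-negative, so combining the triangle inequality with the elementary bound $\max\{G,H\}\leq G+H\leq 2\max\{G,H\}$, valid for $G,H\geq 0$, gives
$$
\Big\|\int_0^{\cdot}(T_{u,b}f)(t)\,a(t)\,dt\Big\|_{q,w,\I}\approx\Big\|\int_0^{\cdot}(R_uf)(t)\,a(t)\,dt\Big\|_{q,w,\I}+\Big\|\int_0^{\cdot}(P_{\bar u,b}f)(t)\,a(t)\,dt\Big\|_{q,w,\I},
$$
with equivalence constants depending only on $c$. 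Dividing by $\|f\|_{p,v,\I}$, taking the supremum over $f\in\mp^{+,\dn}(0,\infty)$, and using that for non-negative functionals $\Phi_1,\Phi_2$ on the cone one has $\sup(\Phi_1+\Phi_2)\leq\sup\Phi_1+\sup\Phi_2\leq 2\sup(\Phi_1+\Phi_2)$, I would reduce the left-hand quantity, up to equivalence, to the sum
\begin{align*}
\sup_{f\in\mp^{+,\dn}(0,\infty)}\frac{\big\|\int_0^{\cdot}(T_{u,b}f)(t)\,a(t)\,dt\big\|_{q,w,\I}}{\|f\|_{p,v,\I}}&\approx\sup_{f\in\mp^{+,\dn}(0,\infty)}\frac{\big\|\int_0^{\cdot}(R_uf)(t)\,a(t)\,dt\big\|_{q,w,\I}}{\|f\|_{p,v,\I}}\\
&\quad+\sup_{f\in\mp^{+,\dn}(0,\infty)}\frac{\big\|\int_0^{\cdot}(P_{\bar u,b}f)(t)\,a(t)\,dt\big\|_{q,w,\I}}{\|f\|_{p,v,\I}}.
\end{align*}

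To conclude I would identify the two quantities on the right: the first is precisely the left-hand side characterized in Theorem~\ref{thm.R} (with the original weight $u$), which accounts for the first five summands in (i) and in (ii), and the second is the left-hand side characterized in Theorem~\ref{aux.thm.3}, applied with $u$ replaced by $\bar u$, which accounts for the remaining summands once $\bar u$ is substituted for $u$ throughout. Adding the two groups of terms produces the claimed equivalences (with the convention that both sides may be $+\infty$). The only genuinely delicate point — and the main, though minor, obstacle — is that Theorem~\ref{aux.thm.3} is stated for a \emph{continuous} weight, whereas $\bar u$ need not be continuous; I would dispose of this by observing that the function $t\mapsto\sup_{t\leq\tau}u(\tau)/B(\tau)$ is non-increasing, hence continuous off an at most countable set, so $\bar u$ agrees outside a countable set with a weight in $\W\I\cap C\I$ and none of the expressions above changes up to equivalence under this replacement — or, equivalently, by checking that continuity of the weight plays no essential role in the proof of Theorem~\ref{aux.thm.3} — which legitimizes the application.
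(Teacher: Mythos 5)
Your argument is exactly the paper's proof: under \eqref{add.cond.} split $T_{u,b}$ pointwise via \eqref{Split}, pass the equivalence through the inner integral and the $L^q(w)$ norm, separate the supremum over the cone into the two suprema for $R_u$ and $P_{\bar{u},b}$, and invoke Theorems \ref{thm.R} and \ref{aux.thm.3} (the latter with $\bar{u}$ in place of $u$). The only quibble is your closing side remark: since $u\in C\I$ and $B$ is continuous and positive, the function $\bar{u}(t)=B(t)\sup_{t\le\tau}u(\tau)/B(\tau)$ is itself continuous, so the detour through ``agreement off a countable set'' (which, as stated, would not in general yield a continuous representative across a jump) is unnecessary, though your alternative observation that continuity plays no essential role in Theorem \ref{aux.thm.3} would also suffice.
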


\begin{proof}
By \eqref{Split}, we have that
\begin{align*}
\sup_{f \in \mp^{+,\dn} (0,\infty)} \frac{\bigg( \int_0^{\infty} \bigg( \int_0^x (T_{u,b} f) (t) a(t)\,dt \bigg)^q w(x)\,dx \bigg)^{\frac{1}{q}}}{\bigg( \int_0^{\infty} f(s)^p v(s)\,ds\bigg)^{\frac{1}{p}}} \approx & \sup_{f \in \mp^{+,\dn} (0,\infty)} \frac{\bigg( \int_0^{\infty} \bigg( \int_0^x (R_u f) (t) a(t)\,dt \bigg)^q w(x)\,dx \bigg)^{\frac{1}{q}}}{\bigg( \int_0^{\infty} f(s)^pv(s)\,ds\bigg)^{\frac{1}{p}}} \\
& + \sup_{f \in \mp^{+,\dn} (0,\infty)} \frac{\bigg( \int_0^{\infty} \bigg( \int_0^x  (P_{\bar{u},b} f)(t)  a(t)\,dt \bigg)^q w(x)\,dx \bigg)^{\frac{1}{q}}}{\bigg( \int_0^{\infty} f(s)^pv(s)\,ds\bigg)^{\frac{1}{p}}}.
\end{align*}	
It remains to apply Theorems \ref{thm.R} and \ref{aux.thm.3}.
\end{proof}
\

\section{The boundedness of $M_{\gamma}$ from $\Lambda^p(v)$ into $\Gamma^q(w)$}\label{Appl.}

\

Suppose that $f$ is a measurable a.e. finite function on ${\mathbb
	R}^n$. Then its non-increasing rearrangement $f^*$ is given by
$$
f^* (t) = \inf \{\lambda > 0: |\{x \in {\mathbb R}^n:\, |f(x)| >
\lambda \}| \le t\}, \quad t \in (0,\infty),
$$
and let $f^{**}$ denotes the Hardy-Littlewood maximal function of
$f^*$, i.e.
$$
f^{**}(t) : = \frac{1}{t} \int_0^t f^* (\tau)\,d\tau, \quad t > 0.
$$
Quite many familiar function spaces can be defined by using the
non-increasing rearrangement of a function. One of the most
important classes of such spaces are the so-called classical Lorentz
spaces.

Let $p \in (0,\infty)$ and $w \in {\mathcal W}(0,\infty)$. Then the
classical Lorentz spaces $\Lambda^p (w)$ and $\Gamma^p (w)$ consist
of all measurable functions $f$ on $\rn$ for which
$\|f\|_{\Lambda^p(w)} : = \|f^*\|_{p,w,(0,\infty)} < \infty$ and
$\|f\|_{\Gamma^p(w)} : = \|f^{**}\|_{p,w,(0,\infty)} < \infty$,
respectively.
For more information about the Lorentz $\Lambda$ and $\Gamma$ spaces
see e.g. \cite{cpss} and the references therein.

The fractional maximal operator, $M_{\gamma}$, $\gamma \in (0,n)$,
is defined at a locally integrable function $f$ on $\rn$ by
$$
(M_{\gamma} f) (x) := \sup_{Q \ni x} |Q|^{ \gamma / n - 1} \int_{Q}
|f(y)|\,dy,\quad x \in \rn.
$$
It was shown in \cite[Theorem 1.1]{ckop} that
\begin{equation}\label{frac.max op.eq.1.}
(M_{\gamma}f)^* (t) \ls \sup_{\tau > t} \tau^{\gamma / n - 1}
\int_0^{\tau} f^*(y)\,dy \ls  (M_{\gamma} \tilde{f})^* (t)
\end{equation}
for every locally integrable function $f$ on $\rn$ and $t \in \I$, where $\tilde{f}
(x) : = f^* (\omega_n |x|^n)$ and $\omega_n$ is the volume of the unit ball in $\rn$. 

The characterization of the boundedness of $M_{\gamma}$ between classical Lorentz
spaces $\La^p(v)$ and $\La^q (w)$ was obtained in \cite{ckop} for the particular case when $1 < p \le
q <\infty$ and in \cite[Theorem 2.10]{o} in the case of more general operators and for extended range of $p$ and $q$ (For the characteriation of the boundedness of more general fractional maximal functions between $\La^p(v)$ and $\La^q (w)$, see \cite{musbil}, and the references therein).  

As an application of obtained results, we calculate the norm of the fractional maximal function $M_{\gamma}$ from $\Lambda^p(v)$ into $\Gamma^q(w)$.
\begin{thm}
	Let $1 < p,\, q < \infty$ and $0 < \gamma< n$. Assume that $v,\,w \in \W\I$.

	{\rm (i)} If $p \le q$, then
	\begin{align*}
	\|M_{\gamma}\|_{\Lambda^p(v) \rw \Gamma^q(w)} & \\
	& \hspace{-2cm} \approx \, \sup_{t \in (0,\infty)} \bigg( \int_0^t V(x)^{p'} v(x) \, \bigg( \int_x^t \bigg( \sup_{s \le \tau} \tau^{\frac{\gamma}{n}} V(\tau)^{-2}\bigg) \,ds \bigg)^{p'} \, dx \bigg)^{\frac{1}{p'}} \, \bigg( \int_t^{\infty} y^{-q} w(y) \,dy \bigg)^{\frac{1}{q}} \notag \\
	& \hspace{-1.5cm} + \sup_{t \in (0,\infty)} \bigg( \int_0^t V(x)^{p'} v(x)  \, dx \bigg)^{\frac{1}{p'}} \, \bigg( \int_t^{\infty} \, \bigg( \int_t^y \bigg( \sup_{s \le \tau}\tau^{\frac{\gamma}{n}} V(\tau)^{-2}\bigg) \,ds \bigg)^{q} y^{-q} w(y) \,dy \bigg)^{\frac{1}{q}} \\
	& \hspace{-1.5cm} + \, \sup_{x \in (0,\infty)} \bigg( \int_{[x,\infty)} \, d \, \bigg( - \sup_{t \le \tau} \tau^{\frac{\gamma}{n}p'} V(\tau)^{-2p'} \bigg( \int_0^{\tau} V(s)^{p'} v(s)\,ds \bigg) \bigg) \bigg)^{\frac{1}{p'}} \, \bigg( \int_0^x w(y) \,dy \bigg)^{\frac{1}{q}} \notag \\
	& \hspace{-1.5cm} + \sup_{x \in (0,\infty)} \bigg( \int_{(0,x]} \,  t^{p'} \, d \, \bigg( - \sup_{t \le \tau} \tau^{\frac{\gamma}{n}p'} V(\tau)^{-2p'} \bigg( \int_0^{\tau} V(s)^{p'} v(s)\,ds \bigg) \bigg) \bigg)^{\frac{1}{p'}} \, \bigg( \int_x^{\infty} y^{-q} w(y) \,dy \bigg)^{\frac{1}{q}} \notag \\
	& \hspace{-1.5cm} + \bigg( \int_0^{\infty} w(y) \,dy \bigg)^{\frac{1}{q}} \, \lim_{t \rightarrow \infty} \bigg(\sup_{t \le \tau} \tau^{\frac{\gamma}{n}} V(\tau)^{-2} \bigg( \int_0^{\tau} V(s)^{p'} v(s)\,ds \bigg)^{\frac{1}{p'}}\bigg) \\
	& \hspace{-1.5cm} + \sup_{x \in (0,\infty)} \bigg( \int_0^x t^{\frac{\gamma}{n} q} w(t)\,dt \bigg)^{\frac{1}{q}} \bigg( \int_x^{\infty} V(s)^{-p'} v(s)\,ds \bigg)^{\frac{1}{p'}} \\
	& \hspace{-1.5cm} + \sup_{x \in (0,\infty)} \bigg( \int_x^{\infty} y^{-q} w(y)\,dy \bigg)^{\frac{1}{q}} \bigg( \int_0^x t^{(\frac{\gamma}{n}+1)p'} V(t)^{-p'} v(t)\,dt \bigg)^{\frac{1}{p'}} \\
	& \hspace{-1.5cm} + \sup_{x \in (0,\infty)} \bigg( \int_x^{\infty} \bigg( t^{\frac{\gamma}{n}} - x^{\frac{\gamma}{n}}\bigg)^q t^{-q} w(t)\,dt \bigg)^{\frac{1}{q}} \bigg( \int_0^x s^{p'} V(s)^{-p'} v(s)\,ds \bigg)^{\frac{1}{p'}} \\
	& \hspace{-1.5cm} + \sup_{x \in (0,\infty)} \bigg( \int_x^{\infty} t^{-q} w(t)\,dt \bigg)^{\frac{1}{q}} \bigg( \int_0^x \bigg( x^{\frac{\gamma}{n}} - s^{\frac{\gamma}{n}} \bigg)^{p'} s^{p'} V(s)^{-p'} v(s)\,ds \bigg)^{\frac{1}{p'}} \\
	& \hspace{-1.5cm} + \bigg( \int_0^{\infty} v(s)\,ds\bigg)^{-\frac{1}{p}} \, \bigg( \int_0^{\infty} x^{\frac{\gamma}{n} q} w(x)\,dx \bigg)^{\frac{1}{q}};
	\end{align*}
	
	{\rm (ii)} If $q < p$, then
	\begin{align*}
	\|M_{\gamma}\|_{\Lambda^p(v) \rw \Gamma^q(w)} & \\
	& \hspace{-2cm} \approx \, \, \bigg( \int_0^{\infty}  \bigg( \int_0^t V(x)^{p'} v(x) \,dx \bigg)^{\frac{r}{q'}} V(t)^{p'} v(t) \, \bigg( \int_t^{\infty} \bigg( \int_t^z \bigg( \sup_{s \le y}y^{\frac{\gamma}{n}}V(y)^{-2}\bigg) \,ds \bigg)^{q} z^{-q} w(z)\, dz \bigg)^{\frac{r}{q}} \, dt \bigg)^{\frac{1}{r}} \notag \\
	& \hspace{-1.5cm} + \bigg( \int_0^{\infty} \bigg( \int_0^t V(x)^{p'} v(x) \bigg( \int_x^t \bigg( \sup_{s \le y}y^{\frac{\gamma}{n}}V(y)^{-2}\bigg) \,ds \bigg)^{p'} \, dx \bigg)^{\frac{r}{p'}} \bigg( \int_z^{\infty} s^{-q} w(s) \,ds \bigg)^{\frac{r}{p}} t^{-q} w(t)\,dt \bigg)^{\frac{1}{r}} \\
	& \hspace{-1.5cm} + \, \bigg( \int_0^{\infty} \bigg( \int_{[x,\infty)} \, d \, \bigg( - \bigg(\sup_{t \le \tau} \tau^{\frac{\gamma}{n}p'} V(\tau)^{-2p'} \, \bigg( \int_0^{\tau} V(s)^{p'} v(s)\,ds \bigg) \bigg) \bigg) \bigg)^{\frac{r}{p'}}\, \bigg( \int_0^x w(y) \,dy \bigg)^{\frac{r}{p}} w(x) \,dx\bigg)^{\frac{1}{r}} \notag \\
	& \hspace{-1.5cm} + \bigg( \int_0^{\infty} \bigg( \int_{(0,x]} \,  t^{p'} \, d \, \bigg( - \bigg(\sup_{t \le \tau} \tau^{\frac{\gamma}{n}p'} V(\tau)^{-2p'} \, \bigg( \int_0^{\tau} V(s)^{p'} v(s)\,ds \bigg) \bigg) \bigg)  \bigg)^{\frac{r}{p'}} \, \bigg( \int_x^{\infty} y^{-q} w(y) \,dy \bigg)^{\frac{r}{p}} x^{-q} w(x)\,dx \bigg)^{\frac{1}{r}} \notag \\
	& \hspace{-1.5cm} + \bigg( \int_0^{\infty} w(y) \,dy \bigg)^{\frac{1}{q}} \, \lim_{t \rightarrow \infty} \bigg(\sup_{t \le \tau} \tau^{\frac{\gamma}{n}p'} V(\tau)^{-2} \bigg( \int_0^{\tau} V(s)^{p'} v(s)\,ds \bigg)^{\frac{1}{p'}}\bigg) \\
	& \hspace{-1.5cm} + \bigg( \int_0^{\infty} \bigg( \int_0^x t^{\frac{\gamma}{n} q} w(t)\,dt \bigg)^{\frac{r}{p}} \, \bigg( \int_x^{\infty}  V(z)^{-p'} v(z)\,dz\bigg)^{\frac{r}{p'}} \, x^{\frac{\gamma}{n} q} w(x)\,dx\bigg)^{\frac{1}{r}} \\
	& \hspace{-1.5cm} + \bigg( \int_0^{\infty} \bigg( \int_x^{\infty} t^{-q} w(t)\,dt \bigg)^{\frac{r}{p}} \, \bigg( \int_0^x  z^{(\frac{\gamma}{n} + 1) p'} V(z)^{-p'} v(z)\,dz\bigg)^{\frac{r}{p'}} \, x^{-q} w(x)\,dx\bigg)^{\frac{1}{r}} \\
	& \hspace{-1.5cm} + \bigg( \int_0^{\infty} \bigg( \int_x^{\infty} t^{-q} w(t)\,dt \bigg)^{\frac{r}{p}} \, \bigg( \int_0^x \bigg( x^{\frac{\gamma}{n}} - z^{\frac{\gamma}{n}} \bigg)^{p'} z^{p'} V(z)^{-p'} v(z)\,dz\bigg)^{\frac{r}{p'}} \, x^{-q} w(x)\,dx\bigg)^{\frac{1}{r}} \\
	& \hspace{-1.5cm} + \bigg( \int_0^{\infty} \bigg( \int_x^{\infty} \bigg( t^{\frac{\gamma}{n}} - x^{\frac{\gamma}{n}} \bigg)^q t^{-q} w(t) \,dt \bigg)^{\frac{r}{q}}  \, \bigg( \int_0^x s^{p'} V(s)^{-p'} v(s)\,ds\bigg)^{\frac{r}{q'}} \, x^{p'} V(x)^{-p'} v(x) \,dx\bigg)^{\frac{1}{r}} \\
	& \hspace{-1.5cm} + \bigg( \int_0^{\infty} v(s)\,ds\bigg)^{-\frac{1}{p}}\,\bigg( \int_0^{\infty} x^{\frac{\gamma}{n} q} w(x)\,dx \bigg)^{\frac{1}{q}}.
	\end{align*}   
	
\end{thm}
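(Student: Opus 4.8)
The plan is to reduce the statement, by means of the pointwise rearrangement estimate \eqref{frac.max op.eq.1.}, to Theorem \ref{thm.T} applied with a concrete choice of the data. First I would recall that $\|M_{\gamma}f\|_{\Gamma^q(w)}=\|(M_{\gamma}f)^{**}\|_{q,w,(0,\infty)}$ and $(M_{\gamma}f)^{**}(x)=\frac1x\int_0^x(M_{\gamma}f)^*(t)\,dt$, so that, after absorbing the factor $x^{-1}$ into the outer weight,
$$\|M_{\gamma}f\|_{\Gamma^q(w)}=\bigg(\int_0^{\infty}\bigg(\int_0^x(M_{\gamma}f)^*(t)\,dt\bigg)^q x^{-q}w(x)\,dx\bigg)^{\frac1q}.$$
By \eqref{frac.max op.eq.1.}, $(M_{\gamma}f)^*(t)\ls\sup_{\tau\ge t}\tau^{\gamma/n-1}\int_0^{\tau}f^*(y)\,dy\ls(M_{\gamma}\widetilde f)^*(t)$ with $\widetilde f(x)=f^*(\omega_n|x|^n)$. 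Using the left inequality for arbitrary $f$, and the right one for functions of the form $F(x)=g(\omega_n|x|^n)$, $g\in\mp^{+,\dn}(0,\infty)$ (for which $F^*=g$, $\widetilde F=F$, and $\|F\|_{\Lambda^p(v)}=\|g\|_{p,v,(0,\infty)}$), then integrating in $t$, taking $L^q(x^{-q}w)$-norms, and passing to the supremum, I would obtain the two-sided equivalence
$$\|M_{\gamma}\|_{\Lambda^p(v)\rw\Gamma^q(w)}\approx\sup_{g\in\mp^{+,\dn}(0,\infty)}\frac{\bigg(\int_0^{\infty}\bigg(\int_0^x\Big(\sup_{t\le\tau}\tau^{\frac{\gamma}{n}-1}\int_0^{\tau}g(y)\,dy\Big)dt\bigg)^q x^{-q}w(x)\,dx\bigg)^{\frac1q}}{\bigg(\int_0^{\infty}g(s)^pv(s)\,ds\bigg)^{\frac1p}}.$$

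Next I would recognise the right-hand side: with $u(\tau):=\tau^{\gamma/n}$ and $b\equiv1$ (so $B(\tau)=\tau$) one has $\sup_{t\le\tau}\tau^{\gamma/n-1}\int_0^{\tau}g(y)\,dy=(T_{u,b}g)(t)$, hence the quotient above is precisely the one characterised in Theorem \ref{thm.T} for $a\equiv1$, $b\equiv1$, $u(\tau)=\tau^{\gamma/n}$, with the outer weight $w$ there replaced by $x^{-q}w(x)$. Before invoking that theorem I would check its hypotheses for this data: $u\in\W\I\cap C\I$, $b\equiv1$ is a positive weight, and condition \eqref{add.cond.} holds since
$$\sup_{0<t<\infty}\frac{u(t)}{B(t)}\int_0^t\frac{b(\tau)}{u(\tau)}\,d\tau=\sup_{0<t<\infty}t^{\frac{\gamma}{n}-1}\int_0^t\tau^{-\frac{\gamma}{n}}\,d\tau=\frac{n}{n-\gamma}<\infty,$$
the last integral being finite precisely because $0<\gamma<n$. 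I would also record that $A(t)=\int_0^t a=t$ and that, $\tau\mapsto\tau^{\gamma/n-1}$ being non-increasing, $\bar u(t)=B(t)\sup_{t\le\tau}u(\tau)/B(\tau)=t\cdot t^{\gamma/n-1}=t^{\gamma/n}=u(t)$.

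It then remains to substitute into Theorem \ref{thm.T}(i) (respectively (ii)) and simplify. With $a\equiv b\equiv1$, $u=\bar u=t^{\gamma/n}$, $B(t)=t$ one has $\int_0^t a\bar u=\int_0^ty^{\gamma/n}\,dy\approx t^{\gamma/n+1}$, so $\big(\int_0^x a\bar u\big)^q x^{-q}w(x)\approx x^{\gamma q/n}w(x)$; $\int_x^t\frac{a(\tau)}{B(\tau)}\bar u(\tau)\,d\tau=\int_x^t\tau^{\gamma/n-1}\,d\tau\approx t^{\gamma/n}-x^{\gamma/n}$; $B(s)/V(s)=s/V(s)$; and replacing $w$ by $x^{-q}w(x)$ turns, e.g., $\int_0^xA(y)^qw(y)\,dy$ into $W(x)$ and $\int_t^{\infty}w$ into $\int_t^{\infty}y^{-q}w(y)\,dy$. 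Feeding these, together with $u(\tau)=\tau^{\gamma/n}$ and the $V(\tau)^{-2}$-factors carried over from Theorem \ref{thm.R} through \eqref{Split}, into the ten summands of Theorem \ref{thm.T} reproduces the asserted expressions term by term.

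I expect the only genuine difficulty to be organisational: propagating the above substitutions consistently through the long list of supremal, Stieltjes and integral summands and, in the case $q<p$, keeping the $r$-exponents in place. There is also a minor technical subtlety — $x^{-q}w(x)$ need not lie in $\W\I$ in the convention of the paper — which is harmless, since all the quantities above remain well defined and the equivalences of Theorem \ref{thm.T} extend to this weight by an obvious truncation argument (and if $x^{-q}w(x)\notin\W\I$ both sides are infinite, so there is nothing to prove).
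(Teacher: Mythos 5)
Your proposal is correct and follows essentially the same route as the paper: reduce via the rearrangement estimate \eqref{frac.max op.eq.1.} to the boundedness of $T_{u,b}$ on the cone $\mp^{+,\dn}(0,\infty)$ with $u(\tau)=\tau^{\gamma/n}$, $b\equiv 1$, $a\equiv 1$ and outer weight $x^{-q}w(x)$, verify condition \eqref{add.cond.}, and apply Theorem \ref{thm.T}. You merely spell out details the paper leaves implicit (the test functions $g(\omega_n|x|^n)$ for the lower bound, $\bar u=u$, $A(t)=t$, and the remark about $x^{-q}w(x)$ as a weight), which is fine.
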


\begin{proof}
From inequalities \eqref{frac.max op.eq.1.}, we have that
$$
\|M_{\gamma}\|_{\Lambda^p(v) \rw \Gamma^q(w)} \approx \sup_{f \in \mp^{+,\dn} (0,\infty)} \frac{\bigg( \int_0^{\infty} \bigg( \int_0^x (T_{u,b} f) (t) \,dt \bigg)^q x^{-q} w(x)\,dx \bigg)^{\frac{1}{q}}}{\bigg( \int_0^{\infty} f(s)^p v(s)\,ds\bigg)^{\frac{1}{p}}} 
$$
with $u(\tau) = \tau^{\gamma / n}$ and $b \equiv 1$. Note that
\begin{equation*}
\sup_{0 < t < \infty} \frac{u(t)}{B(t)} \int_0^t
\frac{b(\tau)}{u(\tau)}\,d\tau < \infty
\end{equation*}
in this case. So, it remains to apply Theorem \ref{thm.T}.
\end{proof}

%


\begin{bibdiv}
\begin{biblist}

\bib{askeyboas}{article}{
author={Askey, R.}, author={Boas, R. P., Jr.}, title={Some
integrability theorems for power series with positive coefficients},
conference={title={Mathematical Essays Dedicated to A. J.
Macintyre},}, book={publisher={Ohio Univ. Press, Athens, Ohio},},
date={1970}, pages={23--32}, review={\MR{0277956 (43 \#3689)}}, }

\bib{astasmal2009}{article}{
author={Astashkin, S. V.}, author={Maligranda, L.}, title={Structure
of Ces\`aro function spaces}, journal={Indag. Math. (N.S.)},
volume={20}, date={2009}, number={3}, pages={329--379},
issn={0019-3577}, review={\MR{2639977 (2011c:46056)}},
doi={10.1016/S0019-3577(10)00002-9}, }

\bib{asmalsurvey}{article}{
author={Astashkin, S. V.}, author={Maligranda, L.}, title={Structure
of Ces\`{a}ro function spaces: a survey}, journal={Banach Center
Publ.}, volume={102}, date={2014}, pages={13--40}, }

\bib{bennett1996}{article}{
author={Bennett, G.}, title={Factorizing the classical
inequalities}, journal={Mem. Amer. Math. Soc.}, volume={120},
date={1996}, number={576}, pages={viii+130}, issn={0065-9266},
review={\MR{1317938 (96h:26020)}}, doi={10.1090/memo/0576}, }

\bib{bengros}{article}{
	author={Bennett, G.},
	author={Grosse-Erdmann, K.- G.},
	title={Weighted Hardy inequalities for decreasing sequences and
		functions},
	journal={Math. Ann.},
	volume={334},
	date={2006},
	number={3},
	pages={489--531},
	issn={0025-5831},
	review={\MR{2207873 (2006m:26038)}},
	doi={10.1007/s00208-005-0678-7},
}

\bib{boas1967}{book}{
author={Boas, R. P., Jr.}, title={Integrability theorems for
trigonometric transforms}, series={Ergebnisse der Mathematik und
ihrer Grenzgebiete, Band 38}, publisher={Springer-Verlag New York
Inc., New York}, date={1967}, pages={v+66}, review={\MR{0219973 (36
\#3043)}}, }

\bib{boas1970}{article}{
author={Boas, R. P., Jr.}, title={Some integral inequalities related
to Hardy's inequality}, journal={J. Analyse Math.}, volume={23},
date={1970}, pages={53--63}, issn={0021-7670}, review={\MR{0274685
(43 \#447)}}, }

\bib{cgmp2008}{article}{
	author={Carro, M.},
	author={Gogatishvili, A.},
	author={Martin, J.},
	author={Pick, L.},
	title={Weighted inequalities involving two Hardy operators with
		applications to embeddings of function spaces},
	journal={J. Operator Theory},
	volume={59},
	date={2008},
	number={2},
	pages={309--332},
	issn={0379-4024},
	review={\MR{2411048 (2009f:26024)}},
}

\bib{cpss}{article}{
	author={Carro, M.},
	author={Pick, L.},
	author={Soria, J.},
	author={Stepanov, V. D.},
	title={On embeddings between classical Lorentz spaces},
	journal={Math. Inequal. Appl.},
	volume={4},
	date={2001},
	number={3},
	pages={397--428},
	issn={1331-4343},
	review={\MR{1841071 (2002d:46026)}},
	doi={10.7153/mia-04-37},
}

	\bib{ckop}{article}{
	author={Cianchi, A.},
	author={Kerman, R.},
	author={Opic, B.},
	author={Pick, L.},
	title={A sharp rearrangement inequality for the fractional maximal
		operator},
	journal={Studia Math.},
	volume={138},
	date={2000},
	number={3},
	pages={277--284},
	issn={0039-3223},
	review={\MR{1758860 (2001h:42029)}},
}

\bib{chencuihudsims}{article}{
	author={Chen, S.}, author={Cui, Y.}, author={Hudzik, H.},
	author={Sims, B.}, title={Geometric properties related to fixed
		point theory in some Banach function lattices}, conference={
		title={Handbook of metric fixed point theory}, }, book={
		publisher={Kluwer Acad. Publ., Dordrecht}, }, date={2001},
	pages={339--389}, review={\MR{1904283 (2003f:46031)}}, }

\bib{CuiPluc}{article}{
	author={Cui, Y.}, author={P{\l}uciennik, R.}, title={Local uniform
		nonsquareness in Ces\`aro sequence spaces}, journal={Comment. Math.
		Prace Mat.}, volume={37}, date={1997}, pages={47--58},
	issn={0373-8299}, review={\MR{1608225 (99b:46025)}}, }

\bib{cuihud1999}{article}{
	author={Cui, Y.}, author={Hudzik, H.}, title={Some geometric
		properties related to fixed point theory in Ces\`aro spaces},
	journal={Collect. Math.}, volume={50}, date={1999}, number={3},
	pages={277--288}, issn={0010-0757}, review={\MR{1744077
			(2001f:46033)}}, }

\bib{cuihud2001}{article}{
	author={Cui, Y.}, author={Hudzik, H.}, title={Packing constant for
		Cesaro sequence spaces}, booktitle={Proceedings of the Third World
		Congress of Nonlinear Analysts, Part 4 (Catania, 2000)},
	journal={Nonlinear Anal.}, volume={47}, date={2001}, number={4},
	pages={2695--2702}, issn={0362-546X}, review={\MR{1972393
			(2004c:46033)}}, doi={10.1016/S0362-546X(01)00389-3}, }

\bib{cuihudli}{article}{
	author={Cui, Y.}, author={Hudzik, H.}, author={Li, Y.}, title={On
		the Garcia-Falset coefficient in some Banach sequence spaces},
	conference={ title={Function spaces}, address={Pozna\'n},
		date={1998}, }, book={ series={Lecture Notes in Pure and Appl.
			Math.}, volume={213}, publisher={Dekker, New York}, }, date={2000},
	pages={141--148}, review={\MR{1772119 (2001h:46009)}}, }

	\bib{cwikpys}{article}{
	author={Cwikel, M.},
	author={Pustylnik, E.},
	title={Weak type interpolation near ``endpoint'' spaces},
	journal={J. Funct. Anal.},
	volume={171},
	date={2000},
	number={2},
	pages={235--277},
	issn={0022-1236},
	review={\MR{1745635 (2001b:46118)}},
	doi={10.1006/jfan.1999.3502},
}

\bib{dok}{article}{
	author={Doktorskii, R. Ya.},
	title={Reiterative relations of the real interpolation method},
	language={Russian},
	journal={Dokl. Akad. Nauk SSSR},
	volume={321},
	date={1991},
	number={2},
	pages={241--245},
	issn={0002-3264},
	translation={
		journal={Soviet Math. Dokl.},
		volume={44},
		date={1992},
		number={3},
		pages={665--669},
		issn={0197-6788},
	},
	review={\MR{1153547 (93b:46143)}},
}

	\bib{evop}{article}{
	author={Evans, W. D.},
	author={Opic, B.},
	title={Real interpolation with logarithmic functors and reiteration},
	journal={Canad. J. Math.},
	volume={52},
	date={2000},
	number={5},
	pages={920--960},
	issn={0008-414X},
	review={\MR{1782334 (2001i:46115)}},
	doi={10.4153/CJM-2000-039-2},
}

\bib{gjop}{article}{
	author={Gogatishvili, A.},
	author={Johansson, M.},
	author={Okpoti, C. A.},
	author={Persson, L.-E.},
	title={Characterisation of embeddings in Lorentz spaces},
	journal={Bull. Austral. Math. Soc.},
	volume={76},
	date={2007},
	number={1},
	pages={69--92},
	issn={0004-9727},
	review={\MR{2343440 (2008j:46017)}},
	doi={10.1017/S0004972700039484},
}

	\bib{GogMusIHI}{article}{
	author={Gogatishvili, A.},
	author={Mustafayev, R.Ch.},
	title={Weighted iterated Hardy-type inequalities},
	journal={Math. Inequal. Appl.},
	volume={20},
	date={2017},
	number={3},
	pages={683--728},
	issn={1331-4343},
	review={\MR{3653914}},
}

	\bib{GogMusISI}{article}{
	author={Gogatishvili, A.},
	author={Mustafayev, R.Ch.},
	title={Iterated Hardy-type inequalities involving suprema},
	journal={Math. Inequal. Appl.},
	volume={20},
	date={2017},
	number={4},
	pages={901--927},
	issn={1331-4343},
	review={\MR{3711402}},
}

\bib{gmu_2017}{article}{
	author={Gogatishvili, A.},
	author={Mustafayev, R.},
	author={\"Unver, T.},
	title={Embeddings between weighted Copson and Ces\`aro function spaces},
	journal={Czechoslovak Math. J.},
	volume={67(142)},
	date={2017},
	number={4},
	pages={1105--1132},
	issn={0011-4642},
	review={\MR{3736022}},
}

\bib{gop}{article}{
author={Gogatishvili, A.}, author={Opic, B.}, author={Pick, L.},
title={Weighted inequalities for Hardy-type operators involving
suprema}, journal={Collect. Math.}, volume={57}, date={2006},
number={3}, pages={227--255}, }

\bib{gp2}{article}{
	author={Gogatishvili, A.},
	author={Pick, L.},
	title={Embeddings and duality theorems for weak classical Lorentz spaces},
	journal={Canad. Math. Bull.},
	volume={49},
	date={2006},
	number={1},
	pages={82--95},
	issn={0008-4395},
	review={\MR{2198721}},
	doi={10.4153/CMB-2006-008-3},
}

\bib{gogpick2007}{article}{
	author={Gogatishvili, A.},
	author={Pick, L.},
	title={A reduction theorem for supremum operators},
	journal={J. Comput. Appl. Math.},
	volume={208},
	date={2007},
	number={1},
	pages={270--279},
	issn={0377-0427},
	review={\MR{2347749 (2009a:26013)}},
}

\bib{GogStep}{article}{
	author={Gogatishvili, A.},
	author={Stepanov, V. D.},
	title={Reduction theorems for weighted integral inequalities on the cone
		of monotone functions},
	language={Russian, with Russian summary},
	journal={Uspekhi Mat. Nauk},
	volume={68},
	date={2013},
	number={4(412)},
	pages={3--68},
	issn={0042-1316},
	translation={
		journal={Russian Math. Surveys},
		volume={68},
		date={2013},
		number={4},
		pages={597--664},
		issn={0036-0279},
	},
	review={\MR{3154814}},
	doi={10.1070/rm2013v068n04abeh004849},
}

\bib{grosse}{book}{
author={Grosse-Erdmann, K.-G.}, title={The blocking technique,
weighted mean operators and Hardy's inequality}, series={Lecture
Notes in Mathematics}, volume={1679}, publisher={Springer-Verlag,
Berlin}, date={1998}, pages={x+114}, isbn={3-540-63902-0},
review={\MR{1611898 (99d:26024)}}, }

\bib{jagers}{article}{
author={Jagers, A. A.}, title={A note on Ces\`aro sequence spaces},
journal={Nieuw Arch. Wisk. (3)}, volume={22}, date={1974},
pages={113--124}, issn={0028-9825}, review={\MR{0348444 (50
\#942)}}, }

\bib{kamkub}{article}{
author={Kami{\'n}ska, A.}, author={Kubiak, D.}, title={On the dual
of Ces\`aro function space}, journal={Nonlinear Anal.}, volume={75},
date={2012}, number={5}, pages={2760--2773}, issn={0362-546X},
review={\MR{2878472 (2012m:46034)}}, doi={10.1016/j.na.2011.11.019},
}

	\bib{kerp}{article}{
	author={Kerman, R.},
	author={Pick, L.},
	title={Optimal Sobolev imbeddings},
	journal={Forum Math.},
	volume={18},
	date={2006},
	number={4},
	pages={535--570},
	issn={0933-7741},
	review={\MR{2254384 (2007g:46052)}},
	doi={10.1515/FORUM.2006.028},
}

\bib{krep}{article}{
	author={K\v{r}epela, M.},
	title={Integral conditions for Hardy-type operators involving suprema},
	journal={Collect. Math.},
	volume={68},
	date={2017},
	number={1},
	pages={21--50},
	issn={0010-0757},
	review={\MR{3591463}},
	doi={10.1007/s13348-016-0170-6},
}

\bib{Leibowitz}{article}{
	author={Leibowitz, G.M.},
	title={A note on the Ces\`aro sequence spaces},
	journal={Tamkang J. Math.},
	volume={2},
	date={1971},
	pages={151--157},
}

	\bib{mazya}{book}{
	author={Maz'ja, V. G.},
	title={Sobolev spaces},
	series={Springer Series in Soviet Mathematics},
	note={Translated from the Russian by T. O. Shaposhnikova},
	publisher={Springer-Verlag, Berlin},
	date={1985},
	pages={xix+486},
	isbn={3-540-13589-8},
	review={\MR{817985}},
	doi={10.1007/978-3-662-09922-3},
}

\bib{musbil}{article}{
	author={Mustafayev, R. Ch.},
	author={Bilgi\c{c}li, N.},
	title={Generalized fractional maximal functions in Lorentz spaces
		$\Lambda$},
	journal={J. Math. Inequal.},
	volume={12},
	date={2018},
	number={3},
	pages={827--851},
	issn={1846-579X},
	review={\MR{3857365}},
	doi={10.7153/jmi-2018-12-62},
}

\bib{Oinar}{article}{
	author={Oinarov, R.},
	title={Two-sided estimates for the norm of some classes of integral
		operators},
	language={Russian},
	journal={Trudy Mat. Inst. Steklov.},
	volume={204},
	date={1993},
	number={Issled. po Teor. Differ. Funktsii Mnogikh Peremen. i ee
		Prilozh. 16},
	pages={240--250},
	issn={0371-9685},
	translation={
		journal={Proc. Steklov Inst. Math.},
		date={1994},
		number={3 (204)},
		pages={205--214},
		issn={0081-5438},
	},
	review={\MR{1320028}},
}

	\bib{o}{article}{
	author={Opic, B.},
	title={On boundedness of fractional maximal operators between classical
		Lorentz spaces},
	conference={
		title={Function spaces, differential operators and nonlinear analysis
		},
		address={Pudasj\"arvi},
		date={1999},
	},
	book={
		publisher={Acad. Sci. Czech Repub., Prague},
	},
	date={2000},
	pages={187--196},
	review={\MR{1755309 (2001g:42043)}},
}

\bib{ok}{book}{
author={Opic, B.}, author={Kufner, A.}, title={Hardy-type
inequalities}, series={Pitman Research Notes in Mathematics Series},
volume={219}, publisher={Longman Scientific \& Technical},
place={Harlow}, date={1990}, pages={xii+333}, isbn={0-582-05198-3},
review={\MR{1069756 (92b:26028)}}, }

	\bib{pick2000}{article}{
	author={Pick, L.},
	title={Supremum operators and optimal Sobolev inequalities},
	conference={
		title={Function spaces, differential operators and nonlinear analysis
		},
		address={Pudasj\"arvi},
		date={1999},
	},
	book={
		publisher={Acad. Sci. Czech Repub., Prague},
	},
	date={2000},
	pages={207--219},
	review={\MR{1755311 (2000m:46075)}},
}

\bib{pick2002}{article}{
	author={Pick, L.},
	title={Optimal Sobolev embeddings---old and new},
	conference={
		title={Function spaces, interpolation theory and related topics (Lund,
			2000)},
	},
	book={
		publisher={de Gruyter, Berlin},
	},
	date={2002},
	pages={403--411},
	review={\MR{1943297 (2003j:46054)}},
}

\bib{prog}{article}{
	author={}, title={Programma van Jaarlijkse Prijsvragen (Annual
		Problem Section)}, journal={Nieuw Arch. Wiskd.}, volume={16},
	date={1968}, number={}, pages={47--51}, }

\bib{PS_Proc_2013}{article}{
	author={Prokhorov, D. V.},
	author={Stepanov, V. D.},
	title={On weighted Hardy inequalities in mixed norms},
	journal={Proc. Steklov Inst. Math.},
	volume={283},
	date={2013},
	number={},
	pages={149--164},
	issn={},
}

\bib{PS_Dokl_2013}{article}{
	author={Prokhorov, D. V.},
	author={Stepanov, V. D.},
	title={Weighted estimates for a class of sublinear operators},
	language={Russian},
	journal={Dokl. Akad. Nauk},
	volume={453},
	date={2013},
	number={5},
	pages={486--488},
	issn={0869-5652},
	translation={
		journal={Dokl. Math.},
		volume={88},
		date={2013},
		number={3},
		pages={721--723},
		issn={1064-5624},
	},
	review={\MR{3203323}},
}

\bib{PS_Dokl_2014}{article}{
	author={Prokhorov, D. V.},
	author={Stepanov, V. D.},
	title={Estimates for a class of sublinear integral operators},
	language={Russian},
	journal={Dokl. Akad. Nauk},
	volume={456},
	date={2014},
	number={6},
	pages={645--649},
	issn={0869-5652},
	translation={
		journal={Dokl. Math.},
		volume={89},
		date={2014},
		number={3},
		pages={372--377},
		issn={1064-5624},
	},
	review={\MR{3287911}},
}

\bib{P_Dokl_2015}{article}{
	author={Prokhorov, D. V.},
	title={On the boundedness of a class of sublinear integral operators},
	language={Russian},
	journal={Dokl. Akad. Nauk},
	volume={92},
	date={2015},
	number={2},
	pages={602--605},
	issn={},
}

	\bib{pys}{article}{
	author={Pustylnik, E.},
	title={Optimal interpolation in spaces of Lorentz-Zygmund type},
	journal={J. Anal. Math.},
	volume={79},
	date={1999},
	pages={113--157},
	issn={0021-7670},
	review={\MR{1749309 (2001a:46028)}},
	doi={10.1007/BF02788238},
}

\bib{Sham}{article}{
	author={Shambilova, G. {\`E}.},
	title={Weighted inequalities for a class of quasilinear integral
		operators on the cone of monotone functions},
	language={Russian, with Russian summary},
	journal={Sibirsk. Mat. Zh.},
	volume={55},
	date={2014},
	number={4},
	pages={912--936},
	issn={0037-4474},
	translation={
		journal={Sib. Math. J.},
		volume={55},
		date={2014},
		number={4},
		pages={745--767},
		issn={0037-4466},
	},
	review={\MR{3242605}},
}

\bib{shiue1}{article}{
	author={Shiue, J.-S.},
	title={On the Ces\`aro sequence spaces},
	journal={Tamkang J. Math.},
	volume={1},
	date={1970},
	number={1},
	pages={19--25},
}

\bib{shiue}{article}{
author={Shiue, J.-S.}, title={A note on Ces\`aro function space},
journal={Tamkang J. Math.}, volume={1}, date={1970}, number={2},
pages={91--95}, issn={0049-2930}, review={\MR{0276751 (43 \#2491)}},
}

\bib{step_1993}{article}{
	author={Stepanov, V. D.},
	title={The weighted Hardy's inequality for nonincreasing functions},
	journal={Trans. Amer. Math. Soc.},
	volume={338},
	date={1993},
	number={1},
	pages={173--186},
	issn={0002-9947},
	review={\MR{1097171}},
	doi={10.2307/2154450},
}

\bib{StepSham}{article}{
	author={Stepanov, V. D.},
	author={Shambilova, G. {\`E}.},
	title={Weight boundedness of a class of quasilinear operators on the cone of monotone functions},
	journal={Dokl. Math.},
	volume={90},
	date={2014},
	number={2},
	pages={569--572},
	issn={},
}

\bib{syzhanglee}{article}{
author={Sy, P. W.}, author={Zhang, W. Y.}, author={Lee, P. Y.},
title={The dual of Ces\`aro function spaces}, language={English,
with Serbo-Croatian summary}, journal={Glas. Mat. Ser. III},
volume={22(42)}, date={1987}, number={1}, pages={103--112},
issn={0017-095X}, review={\MR{940098 (89g:46059)}}, }

\end{biblist}
\end{bibdiv}

%
%
%
%
%
\end{document}